\title{On the continuity of the tangent cone to the determinantal variety\footnotemark[1]}
\author{Guillaume Olikier\footnotemark[2] \and P.-A. Absil\footnotemark[2]}
\date{November 22, 2021}
\newcommand{\N}{\mathbb{N}}
\newcommand{\R}{\mathbb{R}}
\DeclareMathOperator{\dom}{dom}
\DeclareMathOperator{\im}{im}
\newcommand{\ip}[2]{\langle #1 , #2 \rangle}
\newcommand{\norm}[1]{\Vert #1 \Vert}
\DeclareMathOperator*{\argmin}{argmin}
\DeclareMathOperator*{\inlim}{\underline{Lim}}
\DeclareMathOperator*{\outlim}{\overline{Lim}}
\DeclareMathOperator*{\setlim}{Lim}
\newcommand{\setmapsto}{\multimap}
\newcommand{\gencone}[4]{{#1}_{#2}^{#4}(#3)} %general cone
\newcommand{\tancone}[2]{\gencone{T}{#1}{#2}{}} %(Bouligand) tangent cone (or contingent cone)
\newcommand{\regtancone}[2]{\gencone{T}{#1}{#2}{\mathrm{C}}} %(Clarke) regular tangent cone
\newcommand{\regnorcone}[2]{\gencone{\widehat{N}}{#1}{#2}{}} %regular normal cone (or prenormal cone)
\newcommand{\norcone}[2]{\gencone{N}{#1}{#2}{}} %(basic or Mordukhovich) normal cone
\newcommand{\connorcone}[2]{\gencone{N}{#1}{#2}{\mathrm{C}}} %(Clarke) convexified normal cone
\newcommand{\oshort}[1]{\mkern 0.9mu\overline{\mkern-0.9mu#1\mkern-0.9mu}\mkern 0.9mu}
\newcommand{\ushort}[1]{\mkern 0.9mu\underline{\mkern-0.9mu#1\mkern-0.9mu}\mkern 0.9mu}
\newcommand{\tp}{\top}
\newcommand{\mpinv}{\dagger}
\newcommand{\st}{\mathrm{St}}
\newcommand{\grass}{\mathrm{Gr}}
\DeclareMathOperator{\rank}{rk}
\DeclareMathOperator{\diag}{diag}
\newtheorem{theorem}{Theorem}[section]
\newtheorem{proposition}{Proposition}[section]
\newtheorem{lemma}{Lemma}[section]
\newtheorem{corollary}{Corollary}[section]
\begin{document}
\maketitle

\thispagestyle{fancy}
\rhead{Tech.\ report UCL-INMA-2021.06-v2}
\lhead{\url{http://sites.uclouvain.be/absil/2021.06}}

\renewcommand{\thefootnote}{\fnsymbol{footnote}}
\footnotetext[1]{This work was supported by the Fonds de la Recherche Scientifique -- FNRS and the Fonds Wetenschappelijk Onderzoek -- Vlaanderen under EOS Project no 30468160.}
\footnotetext[2]{ICTEAM Institute, UCLouvain, Avenue Georges Lema\^{\i}tre 4, 1348 Louvain-la-Neuve, Belgium (\href{mailto:guillaume.olikier@uclouvain.be}{\nolinkurl{guillaume.olikier@uclouvain.be}}, \href{mailto:pa.absil@uclouvain.be}{\nolinkurl{pa.absil@uclouvain.be}}).}
\renewcommand{\thefootnote}{\arabic{footnote}}

\begin{abstract}
Tangent and normal cones play an important role in constrained optimization to describe admissible search directions and, in particular, to formulate optimality conditions.
They notably appear in various recent algorithms for both smooth and nonsmooth low-rank optimization where the feasible set is the set $\R_{\le r}^{m \times n}$ of all $m \times n$ real matrices of rank at most $r$.
In this paper, motivated by the convergence analysis of such algorithms, we study, by computing inner and outer limits, the continuity of the correspondence that maps each $X \in \R_{\le r}^{m \times n}$ to the tangent cone to $\R_{\le r}^{m \times n}$ at $X$. We also deduce results about the continuity of the corresponding normal cone correspondence.
Finally, we show that our results include as a particular case the $a$-regularity of the Whitney stratification of $\R_{\le r}^{m \times n}$ following from the fact that this set is a real algebraic variety, called the real determinantal variety.
\medskip

\noindent
\textbf{Keywords:}
Low-rank matrices $\cdot$ Determinantal variety $\cdot$ Set convergence $\cdot$ Inner and outer limits $\cdot$ Set-valued mappings $\cdot$ Inner and outer semicontinuity $\cdot$ Tangent and normal cones.
% $\cdot$ Variational geometry
\medskip

\noindent
\textbf{Mathematics Subject Classification:} 14M12, 15B99, 26E25, 49J53.
%14M12: Determinantal varieties
%14P05: Real algebraic sets
%15A03: Vector spaces, linear dependence, rank, lineability
%15B99: Special matrices
%26E25: Set-valued functions
%40A05: Convergence and divergence of series and sequences
%40A30: Convergence and divergence of series and sequences of functions
%49J52: Nonsmooth analysis
%49J53: Set-valued and variational analysis
%49K21: Optimality conditions for problems involving relations other than differential equations
%58A35: Stratified sets
%65F99: Numerical linear algebra
%65K05: Numerical mathematical programming methods
%65K10: Numerical optimization and variational techniques
%90C26: Nonconvex programming, global optimization
%90C30: Nonlinear programming
%90C46: Optimality conditions and duality in mathematical programming
\end{abstract}

%\tableofcontents

\section{Introduction}
\label{sec:Introduction}
In constrained optimization, the feasible set plays a role as important as the objective function: before looking for a descent direction, it is first necessary to know which search directions are admissible. It is now well established that the admissible search directions at a feasible point are described by the so-called tangent and normal cones to the feasible set at that point \cite[Chapter~6]{RockafellarWets}. Those cones therefore play a crucial role in constrained optimization to design algorithms and to formulate optimality conditions. As a matter of fact, they have recently appeared in various algorithms \cite{SchneiderUschmajew2015,ZhouEtAl2016,HosseiniUschmajew2019} and optimality conditions \cite{HosseiniLukeUschmajew2019,LiSongXiu2019,HaLiuFoygel2020} for both smooth and nonsmooth low-rank optimization, where the feasible set is
\begin{equation}
\label{eq:RealDeterminantalVariety}
\R_{\le r}^{m \times n} := \{X \in \R^{m \times n} \mid \rank X \le r\}
\end{equation}
for some positive integers $m$, $n$ and $r$ such that $r < \min\{m,n\}$.
%\com{We consider $r < \min\{m,n\}$ in order to have a ``true'' low-rank optimization problem and to have a true determinantal variety.}

In this paper, we mainly focus on the correspondence that maps each $X \in \R_{\le r}^{m \times n}$ to the tangent cone to $\R_{\le r}^{m \times n}$ at $X$. After preliminaries in Section~\ref{sec:Preliminaries}, we prove in Section~\ref{sec:Rank2x2BlockMatrices} fundamental linear algebra propositions that we use in Section~\ref{sec:MainResult} to prove our main result, Theorem~\ref{theorem:MainResult}, in which we compute inner and outer limits of this correspondence and draw conclusions concerning its continuity.
Such continuity results are required in order to try to strengthen the convergence analysis of the Riemannian rank-adaptive method proposed in~\cite{ZhouEtAl2016}.
Then, we deduce in Section~\ref{sec:ComplementaryResults} results about the continuity of other tangent and normal cones to $\R_{\le r}^{m \times n}$. Finally, we show in Section~\ref{sec:ConnectionRegularityWhitneyStratification} that Theorem~\ref{theorem:MainResult} includes as a particular case the $a$-regularity of the Whitney stratification of $\R_{\le r}^{m \times n}$ following from the fact that $\R_{\le r}^{m \times n}$ is a real algebraic variety, called the real determinantal variety \cite{Harris}.

\section{Preliminaries}
\label{sec:Preliminaries}
In this section, after introducing in Section~\ref{subsec:EuclideanVectorSpaceRealMatrices} basic notation concerning the Euclidean vector space of real matrices and some of its submanifolds, we recall in Section~\ref{subsec:InnerOuterLimitsContinuityCorrespondences} the concepts of relative inner and outer semicontinuity of correspondences, then we review in Section~\ref{subsec:TangentNormalCones} five sorts of tangent and normal cones, and finally we list in Section~\ref{subsec:TangentNormalConesRealDeterminantalVariety} the available formulas enabling to evaluate these tangent and normal cones to the real determinantal variety.

\subsection{The Euclidean vector space of real matrices and some of its submanifolds}
\label{subsec:EuclideanVectorSpaceRealMatrices}
%The Frobenius inner product is defined in \cite[(5.2.7)]{HornJohnson}.
In this paper, $m$ and $n$ are positive integers, $\R^{m \times n}$ is endowed with the Frobenius inner product $\ip{\cdot}{\cdot}$, $\norm{\cdot}$ is the Frobenius norm and, for every $X \in \R^{m \times n}$ and every real number $\rho > 0$, $B(X,\rho) := \{Y \in \R^{m \times n} \mid \norm{X-Y} < \rho\}$ and $B[X,\rho] := \{Y \in \R^{m \times n} \mid \norm{X-Y} \le \rho\}$ are respectively the open and closed balls of center $X$ and radius $\rho$ in $\R^{m \times n}$.
%Local closedness is defined in \cite[1I]{RockafellarWets}.
A nonempty subset $\mathcal{S}$ of $\R^{m \times n}$ is said to be locally closed at $X \in \mathcal{S}$ if $\mathcal{S} \cap B[X,\delta]$ is closed for some real number $\delta > 0$.
%Cones are defined in \cite[3B]{RockafellarWets}.
A nonempty subset $\mathcal{C}$ of $\R^{m \times n}$ is said to be a cone if, for every $X \in \mathcal{C}$ and every real number $\lambda \ge 0$, $\lambda X \in \mathcal{C}$.
%The distance to a set in a metric space is defined in \cite[Definition~1.3.16]{Willem}.
For every nonempty subset $\mathcal{S}$ of $\R^{m \times n}$ and every $X \in \R^{m \times n}$, $d(X,\mathcal{S}) := \inf_{Y \in \mathcal{S}} \norm{X-Y}$ is the distance from $X$ to $\mathcal{S}$.
%The (negative) polar is defined in \cite[p. 25]{AubinFrankowska} and in \cite[6(14)]{RockafellarWets}.
For every nonempty subset $\mathcal{S}$ of $\R^{m \times n}$, $\overline{\mathcal{S}} := \{X \in \R^{m \times n} \mid d(X,\mathcal{S}) = 0\}$ is a closed set called the closure of $\mathcal{S}$ and $\mathcal{S}^- := \{Y \in \R^{m \times n} \mid \ip{Y}{X} \le 0 \; \forall X \in \mathcal{S}\}$ is a closed convex cone called the (negative) polar of $\mathcal{S}$. If $\mathcal{S}$ is a linear subspace of $\R^{m \times n}$, then $\mathcal{S}^-$ is equal to the orthogonal complement $\mathcal{S}^\perp$ of $\mathcal{S}$. If $\emptyset \ne \mathcal{S}_1 \subseteq \mathcal{S}_2 \subseteq \R^{m \times n}$, then $\mathcal{S}_1^- \supseteq \mathcal{S}_2^-$. Finally, according to \cite[Corollary~6.21]{RockafellarWets}, for every cone $\mathcal{C} \subseteq \R^{m \times n}$, $\mathcal{C}^{--}$ is the closed convex hull of $\mathcal{C}$ and, in particular, the polar mapping is an involution on the set of all closed convex cones in $\R^{m \times n}$.
%closure of the convex hull

For every positive integer $r \le \min\{m,n\}$,
\begin{equation}
\label{eq:RealFixedRankManifold}
\R_r^{m \times n} := \{X \in \R^{m \times n} \mid \rank X = r\}
\end{equation}
is the smooth manifold of $m \times n$ rank-$r$ real matrices \cite[Proposition~4.1]{HelmkeShayman1995}, $\st(r,n) := \{U \in \R^{n \times r} \mid U^\tp U = I_r\}$ is the Stiefel manifold \cite[\S 3.3.2]{AbsilMahonySepulchre} and $\mathcal{O}_n := \st(n,n)$ is the orthogonal group.
The set $\{P \in \R_r^{n \times n} \mid P^2 = P = P^\tp\} = \{UU^\tp \mid U \in \st(r,n)\}$ of orthogonal projections in $\R_r^{n \times n}$ can be identified with the Grassmann manifold $\grass(r,n)$ \cite[(2.1)]{BendokatZimmermannAbsil2020} and is therefore compact.
%Orthogonal projection: \cite[\S 0.9.13]{HornJohnson} and \cite[\S 2.5.1]{GolubVanLoan}
If $X \in \R^{m \times n}$ and $X = U \Sigma V^\tp$ is a thin SVD, then the Moore--Penrose generalized inverse of $X$ is $X^\mpinv = V \Sigma^{-1} U^\tp$, and $X X^\mpinv = U U^\tp$ and $X^\mpinv X = V V^\tp$ are respectively orthogonal projections onto $\im X = \im U$ and $\im X^\tp = \im V$. Also, $(X^\tp)^\mpinv = (X^\mpinv)^\tp =: X^{\mpinv\tp}$ and the function $\R_r^{m \times n} \to \R_r^{n \times m} : X \mapsto X^\mpinv$ is continuous.
%Moore--Penrose generalized inverse: \cite[7.3.P7]{HornJohnson}
%Thin SVD: \cite[Theorem 7.3.2(a)]{HornJohnson}
We close this section by proving the following basic proposition concerning the point-set topology of $\R_r^{m \times n}$ that will be frequently used in the paper, often implicitly; some results it contains are mentioned in \cite[\S 3.1.5]{AbsilMahonySepulchre} and \cite[(1.3) and Theorem~3.1]{SchneiderUschmajew2015}. Before that, we extend the definitions of $\R_{\le r}^{m \times n}$ in \eqref{eq:RealDeterminantalVariety} and of $\R_r^{m \times n}$ in \eqref{eq:RealFixedRankManifold} to every $r \in \N$ with, of course, $\R_{\le r}^{m \times n} = \R^{m \times n}$ if $r \ge \min\{m,n\}$, $\R_r^{m \times n} = \emptyset$ if $r > \min\{m,n\}$ and $\R_0^{m \times n} = \R_{\le 0}^{m \times n} = \{0_{m \times n}\}$. We also let $\R_*^{m \times n} := \R_{\min\{m,n\}}^{m \times n}$ denote the set of $m \times n$ real matrices that have full rank. Finally, for every $r \in \N$, we write $\R_{< r}^{m \times n} := \R_{\le r}^{m \times n} \setminus \R_r^{m \times n}$, $\R_{> r}^{m \times n} := \R^{m \times n} \setminus \R_{\le r}^{m \times n}$ and $\R_{\ge r}^{m \times n} := \R^{m \times n} \setminus \R_{< r}^{m \times n}$.

\begin{proposition}
\label{prop:PointSetTopologyRealFixedRankManifold}
For every positive integer $r \le \min\{m,n\}$, $\R_r^{m \times n}$ has the following properties:
\begin{enumerate}
\item it is open if $r = \min\{m,n\}$ while its interior is empty otherwise;
\item its closure is $\R_{\le r}^{m \times n}$;
\item it is dense and relatively open in $\R_{\le r}^{m \times n}$;
\item it is locally closed at each of its points.
\end{enumerate}
\end{proposition}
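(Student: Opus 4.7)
The plan is to derive all four statements from two elementary facts about rank: (a) $\R_{\ge r}^{m\times n}$ is open in $\R^{m\times n}$, because it is characterized by the nonvanishing of at least one $r \times r$ minor, a continuous (in fact polynomial) condition; and equivalently (b) $\R_{<r}^{m\times n} = \R_{\le r-1}^{m\times n}$ is closed, being the common zero set of all $r \times r$ minors. Once these two facts are in hand, each of the four items follows in a few lines.

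For item (1), if $r = \min\{m,n\}$ then $\R_r^{m\times n} = \R_{\ge r}^{m\times n}$, which is open by (a). If $r < \min\{m,n\}$, I would show the interior is empty by exhibiting, for any $X \in \R_r^{m\times n}$ and any $\varepsilon>0$, a matrix in $B(X,\varepsilon)$ of rank strictly greater than $r$: take a thin SVD $X = U\Sigma V^\tp$, extend $U$ and $V$ to matrices in $\st(r+1,m)$ and $\st(r+1,n)$ by appending one more orthonormal column each, and augment $\Sigma$ with a diagonal entry $\eta > 0$; choosing $\eta < \varepsilon$ produces a rank-$(r+1)$ matrix within distance $\varepsilon$ of $X$. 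For item (2), the same SVD-based perturbation shows every $X \in \R_{\le r}^{m\times n}$ is a limit of matrices in $\R_r^{m\times n}$, giving $\R_{\le r}^{m\times n} \subseteq \overline{\R_r^{m\times n}}$; the reverse inclusion is immediate from (b), which implies $\R_{\le r}^{m\times n}$ is closed.

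For item (3), density in $\R_{\le r}^{m\times n}$ is a direct consequence of (2). Relative openness follows from writing $\R_r^{m\times n} = \R_{\le r}^{m\times n} \cap \R_{\ge r}^{m\times n}$ and invoking (a). For item (4), given any $X \in \R_r^{m\times n}$, openness of $\R_{\ge r}^{m\times n}$ yields a $\delta>0$ with $B[X,\delta] \subseteq \R_{\ge r}^{m\times n}$, and then
\[
\R_r^{m\times n} \cap B[X,\delta] \;=\; \R_{\le r}^{m\times n} \cap B[X,\delta],
\]
which is closed as the intersection of two closed sets (using that $\R_{\le r}^{m\times n}$ is closed, by (b)).

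The whole argument is routine; there is no real obstacle beyond making sure the SVD-based perturbation is written cleanly enough to serve both (1) (increasing rank) and (2) (approximating lower-rank matrices by rank-$r$ ones), and being careful about the degenerate case $r = \min\{m,n\}$ when stating (1).
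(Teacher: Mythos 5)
Your proof is correct and follows essentially the same route as the paper: both reduce everything to the openness of $\R_{\ge r}^{m \times n}$ (equivalently, the closedness of $\R_{< r}^{m \times n}$) combined with the density of $\R_r^{m \times n}$ in $\R_{\le r}^{m \times n}$ obtained by perturbing with small additional singular values. The only difference is in how the openness is justified: the paper gets it from the Eckart--Young distance formula $d(X,\R_{< r}^{m \times n}) = \sigma_r$, a quantitative statement it reuses later, whereas you invoke the continuity of the $r \times r$ minors; both are standard and your version is complete.
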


\begin{proof}
For every positive integer $k \le \min\{m,n\}$ and every $X \in \R_k^{m \times n}$ the singular values of which are $\sigma_1 \ge \dots \ge \sigma_k > 0$, by the Eckart--Young theorem \cite{EckartYoung1936} and because arbitrarily small singular values can be added to $X$ if $k < \min\{m,n\}$,
\begin{equation}
\label{eq:DistanceToRealDeterminantalVariety}
d(X,\R_r^{m \times n}) = \left\{ \begin{array}{ll}
\sqrt{\sum_{i=r+1}^k \sigma_i^2} & \text{if } k > r,\\
0 & \text{otherwise}.
\end{array} \right.
\end{equation}
It follows that $\R_{\le r}^{m \times n} \subseteq \overline{\R_r^{m \times n}}$ and that, for every $X \in \R^{m \times n}$, $d(X,\R_{\le r}^{m \times n}) = d(X,\R_r^{m \times n})$.
Thus, for every $X \in \R_r^{m \times n}$ and every $\varepsilon \in \big(0, d(X,\R_{< r}^{m \times n})\big)$, $B(X,\varepsilon) \subseteq \R_{\ge r}^{m \times n}$ with, if $r < \min\{m,n\}$, $B(X,\varepsilon) \cap \R_{> r}^{m \times n} \ne \emptyset$.
This establishes the first point of the proposition. This also implies that $\R_{\ge r}^{m \times n}$ is open and thus that $\R_{\le r}^{m \times n}$ is closed. As a result, $\overline{\R_r^{m \times n}} \subseteq \R_{\le r}^{m \times n}$, which establishes the second point of the proposition, and $\R_r^{m \times n} = \R_{\le r}^{m \times n} \cap \R_{\ge r}^{m \times n}$ is relatively open in $\R_{\le r}^{m \times n}$. The third point of the proposition follows since a set is obviously dense in its closure.
The fourth point follows from the fact that $\R_r^{m \times n} \cap B[X,\frac{1}{2}d(X,\R_{< r}^{m \times n})]$ is closed for every $X \in \R_r^{m \times n}$.
\end{proof}

\subsection{Inner and outer limits, continuity of correspondences}
\label{subsec:InnerOuterLimitsContinuityCorrespondences}
This section is mostly based on \cite[Chapters 4 and 5]{RockafellarWets}.
For every sequence $(S_i)_{i \in \N}$ of sets in a metric space $(X,d_X)$, the two sets
\begin{align*}
\inlim_{i \to \infty} S_i := \big\{x \in X \mid \lim_{i \to \infty} d_X(x,S_i) = 0\big\},&&
\outlim_{i \to \infty} S_i := \big\{x \in X \mid \liminf_{i \to \infty} d_X(x,S_i) = 0\big\}
\end{align*}
%\cite[Definition 1.1.1]{AubinFrankowska}
are closed and respectively called the \emph{inner} and \emph{outer limits} of $(S_i)_{i \in \N}$ \cite[Definition~4.1, Exercise~4.2(a) and Proposition~4.4]{RockafellarWets}. If $S_i \ne \emptyset$ for every $i \in \N$, then $\inlim_{i \to \infty} S_i$ and $\outlim_{i \to \infty} S_i$ are respectively the sets of all possible limits and of all possible cluster points of sequences $(x_i)_{i \in \N}$ such that $x_i \in S_i$ for every $i \in \N$.
It is always true that $\inlim_{i \to \infty} S_i \subseteq \outlim_{i \to \infty} S_i$; if the inclusion is an equality, then $(S_i)_{i \in \N}$ is said to \emph{converge in the sense of Painlev\'{e}} and $\setlim_{i \to \infty} S_i := \inlim_{i \to \infty} S_i = \outlim_{i \to \infty} S_i$ is called the \emph{limit} of $(S_i)_{i \in \N}$.

A \emph{correspondence}, or a \emph{set-valued mapping}, is a triple $F := (A, B, G)$ where $A$ and $B$ are sets respectively called the \emph{set of departure} and the \emph{set of destination} of $F$, and $G$ is a subset of $A \times B$ called the \emph{graph} of $F$.
If $F := (A, B, G)$ is a correspondence, written $F : A \setmapsto B$, then the \emph{image} of $x \in A$ by $F$ is $F(x) := \{y \in B \mid (x,y) \in G\}$ and the \emph{domain} of $F$ is $\dom F := \{x \in A \mid F(x) \ne \emptyset\}$.

We now review a notion of continuity for correspondences $F : X \setmapsto Y$ where $(X,d_X)$ and $(Y,d_Y)$ are two metric spaces. Let $S$ be a nonempty subset of $\dom F$ and $x$ be in the closure $\overline{S}$ of $S$. The two sets
\begin{align}
\label{eq:InnerLimitCorrespondence}
\inlim_{S \ni z \to x} F(z)
&:= \bigcap_{S \ni x_i \to x} \inlim_{i \to \infty} F(x_i)
= \big\{y \in Y \mid \lim_{S \ni z \to x} d_Y(y,F(z)) = 0\big\},\\
\label{eq:OuterLimitCorrespondence}
\outlim_{S \ni z \to x} F(z)
&:= \bigcup_{S \ni x_i \to x} \outlim_{i \to \infty} F(x_i)
= \big\{y \in Y \mid \liminf_{S \ni z \to x} d_Y(y,F(z)) = 0\big\}
\end{align}
are closed and respectively called the inner and outer limits of $F$ relative to $S$ at $x$ \cite[5(1)]{RockafellarWets}. Clearly, $\inlim_{S \ni z \to x} F(z) \subseteq \outlim_{S \ni z \to x} F(z)$; if the inclusion is an equality, then $\setlim_{S \ni z \to x} F(z) := \inlim_{S \ni z \to x} F(z) = \outlim_{S \ni z \to x} F(z)$ is called the limit of $F$ relative to $S$ at $x$.
According to \cite[Definition~5.4]{RockafellarWets}, $F$ is said to be \emph{inner semicontinuous} relative to $S$ at $x \in \overline{S} \cap \dom F$ if $\inlim_{S \ni z \to x} F(z) \supseteq F(x)$, \emph{outer semicontinuous} relative to $S$ at $x$ if $\outlim_{S \ni z \to x} F(z) \subseteq F(x)$ and \emph{continuous} relative to $S$ at $x$ if $F$ is both inner and outer semicontinuous relative to $S$ at $x$, i.e., $\setlim_{S \ni z \to x} F(z) = F(x)$.
If $S = \dom F$, then we omit the ``relative to $S$'' for brevity.
Let us mention two facts that will be frequently used in the paper.
First, if $S' \subseteq S$ and $x \in \overline{S'}$, then
\begin{equation}
\label{eq:MonotonicityInnerOuterLimits}
\inlim_{S \ni z \to x} F(z)
\subseteq \inlim_{S' \ni z \to x} F(z)
\subseteq \outlim_{S' \ni z \to x} F(z)
\subseteq \outlim_{S \ni z \to x} F(z)
\end{equation}
and, in particular, the inner (or outer) semicontinuity of $F$ relative to $S$ at $x$ implies the inner (or outer) semicontinuity of $F$ relative to $S'$ at $x$.
Secondly, if $x \in S$, then \eqref{eq:InnerLimitCorrespondence} and \eqref{eq:OuterLimitCorrespondence} clearly imply that
\begin{equation}
\label{eq:NonDeletedInnerOuterLimits}
\inlim_{S \ni z \to x} F(z) \subseteq \overline{F(x)} \subseteq \outlim_{S \ni z \to x} F(z).
\end{equation}
As a result, in that case, $F$ is inner semicontinuous relative to $S$ at $x \in S$ if and only if $\inlim_{S \ni z \to x} F(z) = \overline{F(x)}$ and outer semicontinuous relative to $S$ at $x$ if and only if $\outlim_{S \ni z \to x} F(z) = F(x)$.

\subsection{Tangent and normal cones}
\label{subsec:TangentNormalCones}
In this section, mostly based on \cite[Chapter~6]{RockafellarWets}, we review, for the convenience of the reader, especially because various terminologies and notations can be found in the literature, the two tangent cones and the three normal cones that are considered in this paper. For every nonempty subset $\mathcal{S}$ of $\R^{m \times n}$, the tangent and normal cones to $\mathcal{S}$ are correspondences with sets of departure and of destination both equal to $\R^{m \times n}$ and domain equal to $\mathcal{S}$. In the rest of this section, $X$ is a point of a subset $\mathcal{S}$ of $\R^{m \times n}$.

The set
\begin{align}
\label{eq:TangentConeOuterLimit}
\tancone{\mathcal{S}}{X}
:=&\; \outlim_{t \to 0^+} \frac{\mathcal{S}-X}{t}\\
\label{eq:TangentConeDistance}
=&\; \left\{V \in \R^{m \times n} \mid \liminf_{t \to 0^+} \frac{d(X+tV,\mathcal{S})}{t} = 0\right\}\\
\label{eq:TangentConeSequence}
=&\; \left\{V \in \R^{m \times n} \mid \exists \begin{array}{l} (t_i)_{i \in \N} \text{ in } (0,\infty) \text{ converging to } 0 \\ (V_i)_{i \in \N} \text{ in } \R^{m \times n} \text{ converging to } V \end{array} : X+t_iV_i \in \mathcal{S} \; \forall i \in \N\right\}
\end{align}
is a closed cone called the \emph{(Bouligand) tangent cone} \cite[Definition~6.1 and Proposition~6.2]{RockafellarWets} or the \emph{contingent cone} \cite[Definition~1.8(i)]{Mordukhovich1} to $\mathcal{S}$ at $X$.
The equality between \eqref{eq:TangentConeOuterLimit} and \eqref{eq:TangentConeSequence} readily follows from the first equality in \eqref{eq:OuterLimitCorrespondence} while the equality between \eqref{eq:TangentConeOuterLimit} and \eqref{eq:TangentConeDistance} follows from the second equality in \eqref{eq:OuterLimitCorrespondence} and the identity
\begin{equation}
\label{eq:TangentVectorDistance}
\frac{d(X+tV, \mathcal{S})}{t} = d\Big(V, \frac{\mathcal{S}-X}{t}\Big)
\end{equation}
holding for every real number $t > 0$ and every $V \in \R^{m \times n}$.
The closedness of $\tancone{\mathcal{S}}{X}$ follows from the fact that it is an outer limit. The fact that $\tancone{\mathcal{S}}{X}$ is a cone is clear from \eqref{eq:TangentConeDistance}. Finally, let us observe that, if $\mathcal{S} \subseteq \mathcal{S}' \subseteq \R^{m \times n}$, then $\tancone{\mathcal{S}}{X} \subseteq \tancone{\mathcal{S}'}{X}$.

The set
\begin{align}
\label{eq:RegularTangentConeInnerLimit}
\regtancone{\mathcal{S}}{X}
:=&\; \inlim_{\substack{t \to 0^+ \\ \mathcal{S} \ni Z \to X}} \frac{\mathcal{S}-Z}{t}\\
\label{eq:RegularTangentConeDistance}
=&\; \left\{V \in \R^{m \times n} \mid \lim_{\substack{t \to 0^+ \\ \mathcal{S} \ni Z \to X}} \frac{d(Z+tV,\mathcal{S})}{t} = 0\right\}\\
\label{eq:RegularTangentConeSequence}
=&\; \left\{V \in \R^{m \times n} \mid \begin{array}{rl}
\multirow{2}{*}{$\forall$ \hspace*{-5mm}} & (t_i)_{i \in \N} \text{ in } (0,\infty) \text{ converging to } 0 \\
& (X_i)_{i \in \N} \text{ in } \mathcal{S} \text{ converging to } X \\
& \hspace*{7mm} \exists \hspace*{1mm} (V_i)_{i \in \N} \text{ in } \R^{m \times n} \text{ converging to } V : X_i+t_iV_i \in \mathcal{S} \; \forall i \in \N
\end{array} \right\}
\end{align}
is a closed convex cone \cite[Theorem~6.26]{RockafellarWets} called the \emph{regular tangent cone} \cite[Definition~6.25]{RockafellarWets} or the \emph{Clarke tangent cone} \cite[Definition~1.8(iii)]{Mordukhovich1} to $\mathcal{S}$ at $X$.
The equality between \eqref{eq:RegularTangentConeInnerLimit} and \eqref{eq:RegularTangentConeSequence} readily follows from the first equality in \eqref{eq:InnerLimitCorrespondence} while the equality between \eqref{eq:RegularTangentConeInnerLimit} and \eqref{eq:RegularTangentConeDistance} follows from the second equality in \eqref{eq:InnerLimitCorrespondence} and \eqref{eq:TangentVectorDistance}.
The closedness of $\regtancone{\mathcal{S}}{X}$ follows from the fact that it is an inner limit. The fact that $\regtancone{\mathcal{S}}{X}$ is a cone is clear from \eqref{eq:RegularTangentConeDistance}; the convexity of $\regtancone{\mathcal{S}}{X}$ is then readily verified using \eqref{eq:RegularTangentConeSequence}.
Those two tangent cones are related by \cite[Theorem~6.26]{RockafellarWets}: if $\mathcal{S}$ is locally closed at $X$, then $\regtancone{\mathcal{S}}{X}$ is the inner limit of $\tancone{\mathcal{S}}{\cdot}$ at $X$, i.e., 
\begin{equation}
\label{eq:RegularTangentConeIsInnerLimitTangentCone}
\inlim_{Z \to X} \tancone{\mathcal{S}}{Z} = \regtancone{\mathcal{S}}{X}.
\end{equation}
%The $\mathcal{S} \ni$ in $\mathcal{S} \ni Z \to X$ is not needed since $\mathcal{S}$ is the domain of the correspondence.

The set
\begin{equation}
\label{eq:RegularNormalCone}
\regnorcone{\mathcal{S}}{X} := (\tancone{\mathcal{S}}{X})^-
\end{equation}
is called the \emph{regular normal cone} \cite[Definition~6.3 and Proposition~6.5]{RockafellarWets} or the \emph{prenormal cone} \cite[Definition~1.1(i) and Corollary~1.11]{Mordukhovich1} to $\mathcal{S}$ at $X$.

The set
\begin{equation}
\label{eq:NormalCone}
\norcone{\mathcal{S}}{X} := \outlim_{Z \to X} \regnorcone{\mathcal{S}}{Z}
\end{equation}
is a closed cone called the \emph{normal cone} to $\mathcal{S}$ at $X$ \cite[Definition~6.3 and Proposition~6.5]{RockafellarWets}. It is closed as an outer limit and it is a cone because of \cite[Exercise~4.14]{RockafellarWets}.
By \cite[Proposition~6.6]{RockafellarWets}, $\norcone{\mathcal{S}}{\cdot}$ is outer semicontinuous:
\begin{equation}
\label{eq:NormalConeOuterSemicontinuous}
\outlim_{Z \to X} \norcone{\mathcal{S}}{Z} = \norcone{\mathcal{S}}{X}.
\end{equation}
If $\mathcal{S}$ is locally closed at $X$, then $\regtancone{\mathcal{S}}{X} = (\norcone{\mathcal{S}}{X})^-$ \cite[Theorem~6.28(b)]{RockafellarWets} and, by \cite[Theorem~1.6]{Mordukhovich1}, $\norcone{\mathcal{S}}{X}$ corresponds to the \emph{(basic) normal cone} to $\mathcal{S}$ at $X$ defined in \cite[Definition~1.1(ii)]{Mordukhovich1} also called the \emph{Mordukhovich normal cone} in \cite{HosseiniLukeUschmajew2019}.

The set
\begin{equation}
\label{eq:ConvexifiedNormalCone}
\connorcone{\mathcal{S}}{X} := (\regtancone{\mathcal{S}}{X})^-
\end{equation}
is called the \emph{Clarke normal cone} to $\mathcal{S}$ at $X$ \cite[p. 17]{Mordukhovich1}.
Since the polar mapping is an involution on the set of all closed convex cones in $\R^{m \times n}$, $\regtancone{\mathcal{S}}{X} = (\connorcone{\mathcal{S}}{X})^-$.
%The following is also stated in \cite[Theorem~3.57]{Mordukhovich1}.
If $\mathcal{S}$ is locally closed at $X$, then $(\norcone{\mathcal{S}}{X})^{--} = (\regtancone{\mathcal{S}}{X})^- = \connorcone{\mathcal{S}}{X}$, i.e., $\connorcone{\mathcal{S}}{X}$ is the closed convex hull of $\norcone{\mathcal{S}}{X}$. For this reason, $\connorcone{\mathcal{S}}{X}$ is sometimes called the \emph{convexified normal cone} to $\mathcal{S}$ at $X$; see \cite[6(19)-6(20) and Exercise~6.38(a)]{RockafellarWets}.

If $\mathcal{S}$ is locally closed at $X$, then those three normal cones are nested as follows:
\begin{equation}
\label{eq:InclusionsNormalCones}
\regnorcone{\mathcal{S}}{X} \subseteq \norcone{\mathcal{S}}{X} \subseteq \connorcone{\mathcal{S}}{X},
\end{equation}
where the first inclusion follows from \eqref{eq:NonDeletedInnerOuterLimits} and holds even if $\mathcal{S}$ is not locally closed at $X$.
%The second inclusion holds because $\norcone{\mathcal{S}}{X} \subseteq (\norcone{\mathcal{S}}{X})^{--} = \connorcone{\mathcal{S}}{X}$.

The five tangent or normal cones that have been introduced in this section are represented in the diagram of Figure~\ref{fig:FiveTangentOrNormalCones} based on \cite[Figure~6.17]{RockafellarWets}.
They generalize the concepts of tangent and normal subspaces from differential geometry. More precisely, according to \cite[Example~6.8]{RockafellarWets}, if $\mathcal{S}$ is a smooth manifold in $\R^{m \times n}$ around $X$, then $\tancone{\mathcal{S}}{X} = \regtancone{\mathcal{S}}{X}$, $\norcone{\mathcal{S}}{X} = \regnorcone{\mathcal{S}}{X} = \connorcone{\mathcal{S}}{X}$, and $\tancone{\mathcal{S}}{X}$ and $\norcone{\mathcal{S}}{X}$ are linear subspaces of $\R^{m \times n}$ that are the orthogonal complements of each other.
\begin{figure}[h]
\begin{center}
\begin{tikzpicture}[scale=2]
\draw (0,1) node {$\tancone{\mathcal{S}}{\cdot}$};
\draw (2,1) node {$\regtancone{\mathcal{S}}{\cdot}$};
\draw (0,0) node {$\regnorcone{\mathcal{S}}{\cdot}$};
\draw (2,0) node {$\norcone{\mathcal{S}}{\cdot}$};
\draw (4,0) node {$\connorcone{\mathcal{S}}{\cdot}$};
\draw[->] (0.3,1) -- node [midway, above] {{\footnotesize inner limit}} (1.7,1);
\draw[->] (0.3,0) -- node [midway, above] {{\footnotesize outer limit}} (1.7,0);
\draw[->] (0,0.85) -- node [midway, right] {{\footnotesize polar}} (0,0.15);
\draw[->] (2,0.15) -- node [midway, right] {{\footnotesize polar}} (2,0.85);
\draw[->] (2.3,0) -- node [midway, above] {{\footnotesize closed convex hull}} (3.7,0);
\draw[<->] (2.15,0.85) -- node [midway, above right] {{\footnotesize polar}} (3.85,0.15);
\end{tikzpicture}
\end{center}
\caption{Five tangent or normal cones to a locally closed subset $\mathcal{S}$ of $\R^{m \times n}$.}
\label{fig:FiveTangentOrNormalCones}
\end{figure}
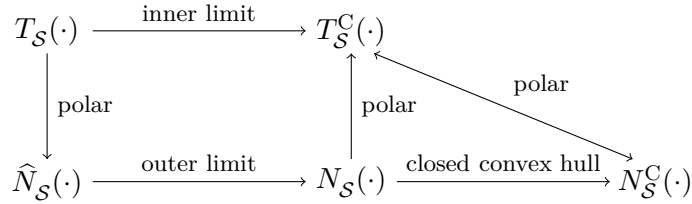

\subsection{Tangent and normal cones to the real determinantal variety}
\label{subsec:TangentNormalConesRealDeterminantalVariety}
In this section, we build on the material reviewed in Section~\ref{subsec:TangentNormalCones} and focus on the case where $\mathcal{S} = \R_{\le \oshort{r}}^{m \times n}$ for positive integers $\oshort{r} \le \min\{m,n\}$. More precisely, we gather in Theorem~\ref{theorem:TangentAndNormalConesRealDeterminantalVariety} the explicit formulas available to evaluate the correspondences $\tancone{\R_{\le \oshort{r}}^{m \times n}}{\cdot}$, $\regtancone{\R_{\le \oshort{r}}^{m \times n}}{\cdot}$, $\norcone{\R_{\le \oshort{r}}^{m \times n}}{\cdot}$, $\regnorcone{\R_{\le \oshort{r}}^{m \times n}}{\cdot}$ and $\connorcone{\R_{\le \oshort{r}}^{m \times n}}{\cdot}$, the semicontinuity of which we will investigate in Sections~\ref{sec:MainResult} and \ref{sec:ComplementaryResults}.

\begin{theorem}
\label{theorem:TangentAndNormalConesRealDeterminantalVariety}
Let $r$ and $\oshort{r}$ be positive integers such that $r \le \oshort{r} \le \min\{m,n\}$, and $X \in \R_r^{m \times n}$.
Then,
\begin{align}
\label{eq:ConvexifiedNormalConeRealDeterminantalVariety}
\connorcone{\R_{\le \oshort{r}}^{m \times n}}{X} &= \norcone{\R_r^{m \times n}}{X} = (\im X)^\perp \otimes (\im X^\tp)^\perp,\\
\label{eq:RegularTangentConeRealDeterminantalVariety}
\regtancone{\R_{\le \oshort{r}}^{m \times n}}{X} &= \tancone{\R_r^{m \times n}}{X} = (\norcone{\R_r^{m \times n}}{X})^\perp,\\
\label{eq:TangentConeRealDeterminantalVariety}
\tancone{\R_{\le \oshort{r}}^{m \times n}}{X} &= \tancone{\R_r^{m \times n}}{X} \oplus \norcone{\R_r^{m \times n}}{X} \cap \R_{\le \oshort{r}-r}^{m \times n},\\
\label{eq:NormalConeRealDeterminantalVariety}
\norcone{\R_{\le \oshort{r}}^{m \times n}}{X} &= \norcone{\R_r^{m \times n}}{X} \cap \R_{\le \min\{m,n\}-\oshort{r}}^{m \times n}.
\end{align}
If $r < \oshort{r}$, then
\begin{equation}
\label{eq:RegularNormalConeRealDeterminantalVariety}
\regnorcone{\R_{\le \oshort{r}}^{m \times n}}{X} = \{0_{m \times n}\}.
\end{equation}
More explicitly, if $U \in \R_*^{m \times r}$, $U_\perp \in \R_*^{m \times m-r}$, $V \in \R_*^{n \times r}$ and $V_\perp \in \R_*^{n \times n-r}$ are such that $\im U = \im X$, $\im U_\perp = (\im X)^\perp$, $\im V = \im X^\tp$ and $\im V_\perp = (\im X^\tp)^\perp$, then $\eta \in \tancone{\R_{\le \oshort{r}}^{m \times n}}{X}$ if and only if there exist $A \in \R^{r \times r}$, $B \in \R^{r \times n-r}$, $C \in \R^{m-r \times r}$ and $D \in \R_{\le \oshort{r}-r}^{m-r \times n-r}$ such that
\begin{equation}
\label{eq:TangentConeRealDeterminantalVarietyMatrix}
\eta = [U \; U_\perp] \begin{bmatrix} A & B \\ C & D \end{bmatrix} [V \; V_\perp]^\tp
\end{equation}
in which case $A = U^\mpinv \eta V^{\mpinv\tp}$, $B = U^\mpinv \eta V_\perp^{\mpinv\tp}$, $C = U_\perp^\mpinv \eta V^{\mpinv\tp}$ and $D = U_\perp^\mpinv \eta V_\perp^{\mpinv\tp}$.
\end{theorem}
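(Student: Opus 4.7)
The plan is to derive the five identities from three ingredients: the smooth manifold structure of $\R_r^{m\times n}$ at $X$, an SVD-based block decomposition, and the polar-duality and inner/outer limit relations recorded in Section~\ref{subsec:TangentNormalCones}. I would work throughout with orthonormal completions $[U\;U_\perp]\in\mathcal{O}_m$ and $[V\;V_\perp]\in\mathcal{O}_n$ of a thin SVD $X=U\Sigma V^\tp$; the general full-column-rank completions of the statement differ only by invertible changes of basis in each block, under which the Moore--Penrose expressions for $A,B,C,D$ are invariant. Differentiating a local SVD parametrization of $\R_r^{m\times n}$ at $X$ shows that its tangent space at $X$ consists exactly of those $\eta$ whose $(2,2)$-block $D$ vanishes; the orthogonal decomposition of $\R^{m\times n}$ into the four block components then yields $\norcone{\R_r^{m\times n}}{X}=U_\perp\R^{(m-r)\times(n-r)}V_\perp^\tp=(\im X)^\perp\otimes(\im X^\tp)^\perp$, giving the right-hand equalities in \eqref{eq:ConvexifiedNormalConeRealDeterminantalVariety} and \eqref{eq:RegularTangentConeRealDeterminantalVariety}.

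Next I would prove \eqref{eq:TangentConeRealDeterminantalVariety} directly from \eqref{eq:TangentConeSequence}. Given witnesses $t_i\to 0^+$ and $V_i\to\eta$ with $X+t_iV_i\in\R_{\le\oshort{r}}^{m\times n}$, write each $V_i$ in block form with blocks $(A_i,B_i,C_i,D_i)\to(A,B,C,D)$. For $i$ large the top-left block $\Sigma+t_iA_i$ is invertible, and the Schur-complement identity gives
\[
\rank(X+t_iV_i)=r+\rank\bigl(t_iD_i-t_i^2C_i(\Sigma+t_iA_i)^{-1}B_i\bigr);
\]
scaling out $t_i$ and using that $\R_{\le\oshort{r}-r}^{m-r\times n-r}$ is closed then forces $\rank D\le\oshort{r}-r$. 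Conversely, for $\eta$ lying in the right-hand side of \eqref{eq:TangentConeRealDeterminantalVariety}, applying the same identity to the linear curve $t\mapsto X+t\eta$ shows that it has rank $\le\oshort{r}$ for all small $t\ge 0$, so $\eta\in\tancone{\R_{\le\oshort{r}}^{m\times n}}{X}$. The sum in \eqref{eq:TangentConeRealDeterminantalVariety} is direct because its two summands sit in orthogonal block components.

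The remaining formulas follow by polarity and limit passages. The inclusion $\tancone{\R_r^{m\times n}}{X}\subseteq\regtancone{\R_{\le\oshort{r}}^{m\times n}}{X}$ follows from \eqref{eq:RegularTangentConeSequence} by lifting each approach sequence $X_i\to X$ in $\R_{\le\oshort{r}}^{m\times n}$ (which eventually satisfies $\rank X_i\ge r$ by openness of $\R_{\ge r}^{m\times n}$) to its own smooth stratum $\R_{\rank X_i}^{m\times n}\subseteq\R_{\le\oshort{r}}^{m\times n}$ and using continuity of SVD data on each fixed-rank manifold to produce $V_i\to V$; the reverse inclusion uses \eqref{eq:RegularTangentConeIsInnerLimitTangentCone} together with \eqref{eq:TangentConeRealDeterminantalVariety} applied along approach sequences of maximal rank $\oshort{r}$, whose tangent cones are linear and whose inner limit, taken over the free $(\oshort{r}-r)$-dimensional directions of approach, collapses to $\tancone{\R_r^{m\times n}}{X}$. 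Polar duality then yields $\connorcone{\R_{\le\oshort{r}}^{m\times n}}{X}=(\tancone{\R_r^{m\times n}}{X})^\perp=\norcone{\R_r^{m\times n}}{X}$, and \eqref{eq:RegularNormalConeRealDeterminantalVariety} follows because when $r<\oshort{r}$ formula \eqref{eq:TangentConeRealDeterminantalVariety} shows that $\tancone{\R_{\le\oshort{r}}^{m\times n}}{X}$ spans all of $\R^{m\times n}$, so its polar is $\{0\}$. Finally, \eqref{eq:NormalConeRealDeterminantalVariety} is read off \eqref{eq:NormalCone}: only points $Z$ of rank exactly $\oshort{r}$ contribute to the outer limit (elsewhere $\regnorcone{\R_{\le\oshort{r}}^{m\times n}}{Z}=\{0\}$ by \eqref{eq:RegularNormalConeRealDeterminantalVariety}), and an elementary approximation argument, combined with the rank constraint $\rank\le\min\{m,n\}-\oshort{r}$ inherent in $(\im Z)^\perp\otimes(\im Z^\tp)^\perp$, identifies the outer limit with the matrices in $(\im X)^\perp\otimes(\im X^\tp)^\perp$ of rank at most $\min\{m,n\}-\oshort{r}$.

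The main obstacle is the Bouligand tangent cone step: extracting the bound $\rank D\le\oshort{r}-r$ from the constraint $\rank(X+t_iV_i)\le\oshort{r}$ requires the Schur-complement identity and careful control of the $t_i^2$ cross-term; once \eqref{eq:TangentConeRealDeterminantalVariety} is in hand, the other four identities follow essentially formally from polar duality, orthogonal block decomposition, and the recorded relations \eqref{eq:RegularTangentConeIsInnerLimitTangentCone} and \eqref{eq:NormalCone}.
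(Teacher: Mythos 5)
Your proposal takes a genuinely more ambitious route than the paper, which establishes all five identities essentially by citation (to Helmke--Shayman, Schneider--Uschmajew, Hosseini--Luke--Uschmajew and Li--Song--Xiu), only supplying a short polar-duality proof of \eqref{eq:RegularTangentConeRealDeterminantalVariety}. Most of your outline is sound in spirit, and your forward Schur-complement argument for \eqref{eq:TangentConeRealDeterminantalVariety} (dividing out $t_i$ and invoking closedness of $\R_{\le \oshort{r}-r}^{m-r \times n-r}$) is correct. But there is a genuine error in the converse direction of \eqref{eq:TangentConeRealDeterminantalVariety}, which you yourself flag as the crux: you claim that for $\eta$ in the right-hand side, the \emph{linear} curve $t \mapsto X + t\eta$ stays in $\R_{\le \oshort{r}}^{m \times n}$ for small $t \ge 0$. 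This is false. By your own identity, $\rank(X+t\eta) = r + \rank\bigl(D - tC(\Sigma+tA)^{-1}B\bigr)$ for small $t>0$, and the first-order term $-tC\Sigma^{-1}B$ can raise the rank of the Schur complement above $\oshort{r}-r$ even when $\rank D \le \oshort{r}-r$. Concretely, with $m=n=2$, $r=\oshort{r}=1$, $X=\mathrm{diag}(1,0)$ and $\eta = \left[\begin{smallmatrix}0&1\\1&0\end{smallmatrix}\right]$ (so $D=0$), one has $\det(X+t\eta)=-t^2\ne 0$, so $X+t\eta$ has rank $2$ for every $t\ne 0$; yet $\eta$ lies in $\tancone{\R_1^{2\times 2}}{X}\subseteq \tancone{\R_{\le 1}^{2\times 2}}{X}$. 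The definition \eqref{eq:TangentConeSequence} only requires $X+t_iV_i\in\mathcal{S}$ for some $V_i\to\eta$, not $V_i=\eta$; the standard fix is either to perturb, taking $V_t := \eta + t\,U_\perp C(\Sigma+tA)^{-1}B\,V_\perp^\tp$ so that the Schur complement of $X+tV_t$ is exactly $tD$, or to split $\eta = \eta_1 + U_\perp D V_\perp^\tp$ with $\eta_1 \in \tancone{\R_r^{m\times n}}{X}$, pick a smooth curve $\gamma_1$ in $\R_r^{m\times n}$ with $\gamma_1(0)=X$, $\gamma_1'(0)=\eta_1$, and bound $\rank\bigl(\gamma_1(t)+tU_\perp D V_\perp^\tp\bigr) \le r + \rank D \le \oshort{r}$ by subadditivity of rank.

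Two further points are glossed over rather than wrong. For $\tancone{\R_r^{m\times n}}{X} \subseteq \regtancone{\R_{\le\oshort{r}}^{m\times n}}{X}$ you appeal to ``continuity of SVD data,'' but the SVD factors are not continuous across the strata; the transport of a tangent vector along an arbitrary approaching sequence $X_i$ with $r \le \rank X_i \le \oshort{r}$ needs the orthogonal-projection construction of Lemmas~\ref{lemma:ConvergenceTangentConeDecompositionConstantRank} and \ref{lemma:ConvergenceTangentConeDecompositionDecreaseRank} (this is precisely what Proposition~\ref{prop:LowerBoundInnerLimitTangentConeRealDeterminantalVariety} does). Likewise, the claim that the inner limit of $\tancone{\R_{\oshort{r}}^{m\times n}}{Z}$ over rank-$\oshort{r}$ approach sequences ``collapses to'' $\tancone{\R_r^{m\times n}}{X}$ is exactly the content of Proposition~\ref{prop:MinimalInnerLimitTangentConeRealDeterminantalVariety} and requires the density construction of Lemma~\ref{lemma:DenseTangentConeDecompositionDecreaseRank}; it is not immediate. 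These can be repaired, but as written the linear-curve step is the one that actually fails.
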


\begin{proof}
The tangent space $\tancone{\R_r^{m \times n}}{X}$ is given in \cite[Proposition 4.1]{HelmkeShayman1995} and the normal space $\norcone{\R_r^{m \times n}}{X}$ is equal to $(\tancone{\R_r^{m \times n}}{X})^\perp$.
See \cite[Corollary~3.2]{HosseiniLukeUschmajew2019} for \eqref{eq:ConvexifiedNormalConeRealDeterminantalVariety}, \cite[Theorem~3.1]{LiSongXiu2019} for \eqref{eq:RegularTangentConeRealDeterminantalVariety}, \cite[Theorem~3.2]{SchneiderUschmajew2015} for \eqref{eq:TangentConeRealDeterminantalVariety}, \cite[Theorem~3.1]{HosseiniLukeUschmajew2019} for \eqref{eq:NormalConeRealDeterminantalVariety} and \cite[Corollary~2.3]{HosseiniLukeUschmajew2019} for \eqref{eq:RegularNormalConeRealDeterminantalVariety}.
Let us provide a shorter proof of \eqref{eq:RegularTangentConeRealDeterminantalVariety} than in \cite{LiSongXiu2019}: $\regtancone{\R_{\le \oshort{r}}^{m \times n}}{X} = (\connorcone{\R_{\le \oshort{r}}^{m \times n}}{X})^- = (\norcone{\R_r^{m \times n}}{X})^- = (\norcone{\R_r^{m \times n}}{X})^\perp = \tancone{\R_r^{m \times n}}{X}$. The last two equalities hold because, as $\R_r^{m \times n}$ is a smooth manifold, $\norcone{\R_r^{m \times n}}{X}$ is a linear space the orthogonal complement of which is $\tancone{\R_r^{m \times n}}{X}$.
\end{proof}

\section{On the rank of $2 \times 2$ block matrices}
\label{sec:Rank2x2BlockMatrices}
In this section, $A \in \R^{k \times k}$, $B \in \R^{k \times q}$, $C \in \R^{p \times k}$ and $D \in \R^{p \times q}$ for positive integers $k$, $p$ and $q$, $s \in \N$ with $s \le \min\{p,q\}$ and, for every $\ell \in \N$, $J_\ell := [\delta_{i,\ell+1-j}]_{i,j=1}^\ell = \left[\begin{smallmatrix} & & 1 \\ & \reflectbox{$\ddots$} & \\ 1 & & \end{smallmatrix}\right]$ is the reversal matrix.
%Reversal matrix: \cite[(0.9.5.1)]{HornJohnson}.

The following is a basic linear algebra result, but we could not find it in the literature, and it will play an instrumental role in the proof of Proposition~\ref{prop:UpperBoundOuterLimitTangentConeRealDeterminantalVariety}.

\begin{proposition}
\label{prop:BoundRank2*2BlockMatrices}
If $\rank D \le s$, then the following upper bound holds and is tight:
\begin{equation*}
\rank \left[\begin{smallmatrix} A & B \\ C & D \end{smallmatrix}\right] \le k + \min\{k+s, p, q\}.
\end{equation*}
\end{proposition}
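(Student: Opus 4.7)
The plan is to prove the stated inequality by combining the trivial row- and column-count bounds with a row/column subadditivity bound, and then to exhibit a matrix attaining the resulting minimum.

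For the upper bound, write $M := \begin{bmatrix} A & B \\ C & D \end{bmatrix}$. Since $M$ has $k+p$ rows and $k+q$ columns, we already have $\rank M \le k+p$ and $\rank M \le k+q$. The row space of $M$ is the sum of the row spaces of $[A\;B]$ and $[C\;D]$, so
\begin{equation*}
\rank M \le \rank [A\;B] + \rank [C\;D] \le k + (\rank C + \rank D) \le k + (k+s) = 2k+s,
\end{equation*}
where the first inequality in the second estimate uses that $[A\;B]$ has $k$ rows and that the column space of $[C\;D]$ is the sum of those of $C$ and $D$, and the second uses that $C$ has $k$ columns and $\rank D \le s$. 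Taking the minimum of the three bounds yields $\rank M \le k + \min\{k+s,\,p,\,q\}$.

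For tightness, I would set $\ell := \min\{k,\,p-s,\,q-s\}$, which is nonnegative (as $s \le \min\{p,q\}$) and satisfies $\ell + s = \min\{k+s,\,p,\,q\}$. Take $A = I_k$; let $B \in \R^{k \times q}$ and $C \in \R^{p \times k}$ each have $I_\ell$ in the top-left corner and zeros elsewhere; and let $D \in \R^{p \times q}$ have $I_s$ in the bottom-right $s \times s$ block and zeros elsewhere, so $\rank D = s$. Because $A$ is invertible, the Schur-complement factorization gives $\rank M = \rank A + \rank(D - CB) = k + \rank(D - CB)$. A direct computation shows that $CB \in \R^{p \times q}$ equals $I_\ell$ in its top-left corner and vanishes elsewhere; the inequalities $\ell \le p-s$ and $\ell \le q-s$ ensure that the support of $CB$ is disjoint from that of $D$. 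Consequently $D - CB$ has rank $\ell + s$, and
\begin{equation*}
\rank M = k + \ell + s = k + \min\{k+s,\,p,\,q\}.
\end{equation*}

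No step is technically hard; the main bookkeeping item is the disjoint-support condition $\ell \le p-s,\,\ell \le q-s$, which is immediate from the definition of $\ell$. The only subtlety to double-check is the unified handling of which of $k+s$, $p$, $q$ achieves the minimum, which is precisely why the parameter $\ell$ is introduced.
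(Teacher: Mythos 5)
Your proof is correct and follows essentially the same route as the paper: the upper bound comes from combining the trivial row/column counts with rank subadditivity across the block rows, and tightness is shown by an explicit example of rank $k+\min\{k+s,p,q\}$. The only cosmetic difference is the tight example itself (identity blocks verified via a Schur complement, versus the paper's block-diagonal arrangement of reversal matrices), but both constructions realize the same idea and your disjoint-support bookkeeping with $\ell=\min\{k,p-s,q-s\}$ checks out.
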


\begin{proof}
If $k+s \ge \min\{p,q\}$, then the bound holds because $\left[\begin{smallmatrix} A & B \\ C & D \end{smallmatrix}\right] \in \R^{k+p \times k+q}$.
If $k+s < \min\{p,q\}$, then the bound holds because
$
\rank \left[\begin{smallmatrix} A & B \\ C & D \end{smallmatrix}\right]
%= \rank \left(\left[\begin{smallmatrix} A & B \\ C & 0_{p \times q} \end{smallmatrix}\right] + \left[\begin{smallmatrix} 0_k & \\ & D \end{smallmatrix}\right]\right)
\le \rank \left[\begin{smallmatrix} A & B \\ C & 0_{p \times q} \end{smallmatrix}\right] + \rank \left[\begin{smallmatrix} 0_k & \\ & D \end{smallmatrix}\right]
\le 2k + s
$.
In both cases, the bound is reached for $\left[\begin{smallmatrix} A & B \\ C & D \end{smallmatrix}\right] := \diag(J_{k+\min\{k, \min\{p,q\}-s\}}, J_s, 0_{p-\min\{k+s, p, q\} \times q-\min\{k+s, p, q\}})$.
\end{proof}

The next result is a sort of converse of the preceding one and will be invoked in the proof of Proposition~\ref{prop:MaximalOuterLimitTangentConeRealDeterminantalVariety}.

\begin{proposition}
\label{prop:PermutationRank2*2BlockMatrices}
If
$
\rank \left[\begin{smallmatrix} A & B \\ C & D \end{smallmatrix}\right] \le 2k+s
$,
then there exist $U \in \mathcal{O}_{k+p}$ and $V \in \mathcal{O}_{k+q}$ such that
$
\left[\begin{smallmatrix} A & B \\ C & D \end{smallmatrix}\right] = U \left[\begin{smallmatrix} A' & B' \\ C' & D' \end{smallmatrix}\right] V^\tp
$
with $A' \in \R^{k \times k}$, $B' \in \R^{k \times q}$, $C' \in \R^{p \times k}$, $D' \in \R^{p \times q}$ and
\begin{equation*}
\rank D' \le \min\{s, \max\{\min\{p,q\}-k, 0\}\}.
\end{equation*}
\end{proposition}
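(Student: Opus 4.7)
The plan is to let $M := \left[\begin{smallmatrix} A & B \\ C & D \end{smallmatrix}\right]$ and $\rho := \rank M \le 2k+s$, and to construct orthogonal matrices $U = [U_1 \; U_2] \in \mathcal{O}_{k+p}$ and $V = [V_1 \; V_2] \in \mathcal{O}_{k+q}$ (with $U_1$ and $V_1$ each having $k$ columns) such that $D' := U_2^\tp M V_2$ satisfies the required rank bound. Setting $W_U := \im U_1$ and $W_V := \im V_1$, both of dimension $k$, the surjectivity of $U_2^\tp$ together with $\ker U_2^\tp = W_U$ will give
\begin{equation*}
\rank D' = \dim M(W_V^\perp) - \dim\bigl( W_U \cap M(W_V^\perp) \bigr),
\end{equation*}
so the problem reduces to choosing $W_U$ and $W_V$ to minimize this quantity.

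First, I would minimize $\dim M(W_V^\perp) = q - \dim(W_V^\perp \cap \ker M)$ by packing $W_V^\perp$ with as much of $\ker M$ as possible: when $\rho \ge k$, pick any $k$-dimensional $W_V \subseteq \im M^\tp$, so that $W_V^\perp \supseteq \ker M$; when $\rho < k$, pick $W_V \supseteq \im M^\tp$, so that $W_V^\perp \subseteq \ker M$. Using the injectivity of $M$ restricted to $\im M^\tp$, both subcases yield $\dim M(W_V^\perp) = \max(\rho - k, 0) =: d$. Next, I would maximize $\dim(W_U \cap M(W_V^\perp))$ by taking $W_U$ to be a $k$-dimensional subspace of $M(W_V^\perp)$ when $d \ge k$, and $W_U \supseteq M(W_V^\perp)$ otherwise. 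This makes the intersection dimension equal to $\min(d,k)$, hence
\begin{equation*}
\rank D' = d - \min(d,k) = \max(\rho - 2k, 0).
\end{equation*}

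It then remains to verify that $\max(\rho-2k,0) \le \min\{s, \max\{\min\{p,q\}-k, 0\}\}$. The bound $\rho - 2k \le s$ is precisely the hypothesis, while the bound $\rho - 2k \le \min\{p,q\} - k$ follows from the trivial inequality $\rho \le \min(k+p, k+q) = k + \min\{p,q\}$; taking positive parts preserves both inequalities.

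The main obstacle is the need for this two-sided projection argument. A one-sided rotation (for instance, simply computing a full SVD of $M$ and reading off the lower-right block of the resulting diagonal matrix) only yields $\rank D' \le \max(\rho - k, 0)$, which may be as large as $k+s$ and is therefore too coarse. The improvement comes from first selecting $W_V$ so that $M$ compresses to a subspace of $\R^{k+p}$ of dimension $\max(\rho-k,0)$, and then further choosing $W_U$ to absorb this subspace; doing so exploits both that $\dim W_U = \dim W_V = k$ and that the rank $\rho$ of $M$ exceeds $2k$ by at most $s$.
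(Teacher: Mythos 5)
Your argument is correct. It establishes the claim by a coordinate-free route: writing $U=[U_1\;U_2]$, $V=[V_1\;V_2]$ and $W_U=\im U_1$, $W_V=\im V_1$, you first pick $W_V$ so that $\dim M(W_V^\perp)$ attains its minimum $\max\{\rho-k,0\}$, then pick $W_U$ to absorb as much of $M(W_V^\perp)$ as possible, and read off $\rank D'=\max\{\rho-2k,0\}$ from rank--nullity; the final comparison with $\min\{s,\max\{\min\{p,q\}-k,0\}\}$ uses exactly the hypothesis $\rho\le 2k+s$ and the trivial bound $\rho\le k+\min\{p,q\}$, so nothing is missing. The paper reaches the same conclusion by an explicit closed-form construction: from an SVD $M=[\tilde U\;U_\perp]\diag(\Sigma,0)[\tilde V\;V_\perp]^\tp$ it sets $U:=[\tilde U J_r\;U_\perp]$, $V:=[\tilde V\;V_\perp]$ and $M':=\diag(J_r\Sigma,0)$, so that the $r$ nonzero entries of $M'$ lie on an anti-diagonal of which only $\max\{r-2k,0\}$ fall in the lower-right $p\times q$ block. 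The two constructions agree in substance --- the paper's choice amounts to taking $W_V$ spanned by the first $k$ right singular vectors and $W_U$ by the left singular vectors of the $k$ smallest nonzero singular values, so that $W_U\subseteq M(W_V^\perp)$ --- but yours isolates the dimension-counting mechanism and avoids the SVD altogether, while the paper's gives a one-line explicit factorization. Both achieve the exact value $\max\{\rho-2k,0\}$ for $\rank D'$, which is what the stated bound requires.
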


\begin{proof}
Let
$
\left[\begin{smallmatrix} A & B \\ C & D \end{smallmatrix}\right] = [\tilde{U} \; U_\perp] \left[\begin{smallmatrix} \Sigma & \\ & 0_{k+p-r \times k+q-r} \end{smallmatrix}\right] [\tilde{V} \; V_\perp]^\tp
$
be an SVD, where $r := \rank \left[\begin{smallmatrix} A & B \\ C & D \end{smallmatrix}\right] \le k + \min\{k+s, p, q\}$.
Then, it suffices to take
$
\left[\begin{smallmatrix} A' & B' \\ C' & D' \end{smallmatrix}\right] := \left[\begin{smallmatrix} J_r \Sigma & \\ & 0_{k+p-r \times k+q-r} \end{smallmatrix}\right]
$,
$U := [\tilde{U} J_r \; U_\perp]$ and $V := [\tilde{V} \; V_\perp]$.
\end{proof}

\section{Main result}
\label{sec:MainResult}
In this section, for positive integers $\ushort{r}$, $r$ and $\oshort{r}$ such that $\ushort{r} \le r \le \oshort{r} < \min\{m,n\}$, we compute inner and outer limits relative to $\R_r^{m \times n}$ and $\R_{\le r}^{m \times n}$ at $X \in \R_{\ushort{r}}^{m \times n}$ of the correspondence
\begin{align}
\label{eq:CorrespondenceTangentConeRealDeterminantalVariety}
\tancone{\R_{\le \oshort{r}}^{m \times n}}{\cdot} : \R^{m \times n} \setmapsto \R^{m \times n} : X \mapsto \tancone{\R_{\le \oshort{r}}^{m \times n}}{X},&&
\dom \tancone{\R_{\le \oshort{r}}^{m \times n}}{\cdot} = \R_{\le \oshort{r}}^{m \times n},
\end{align}
and draw conclusions on its relative (semi)continuity.
More precisely, we prove the following result.
%\com{Another good reason to consider $\oshort{r} < \min\{m,n\}$ since everything said in this section is trivial if $\oshort{r} = \min\{m,n\}$.}

\begin{theorem}
\label{theorem:MainResult}
For every sequence $(X_i)_{i \in \N}$ in $\R_{\le r}^{m \times n}$ converging to $X \in \R_{\ushort{r}}^{m \times n}$,
\begin{equation}
\label{eq:LowerUpperBoundsInnerOuterLimitsTangentConeRealDeterminantalVariety}
\tancone{\R_{\le \oshort{r}-r+\ushort{r}}^{m \times n}}{X}
\subseteq \inlim_{i \to \infty} \tancone{\R_{\le \oshort{r}}^{m \times n}}{X_i}
\subseteq \outlim_{i \to \infty} \tancone{\R_{\le \oshort{r}}^{m \times n}}{X_i}
\subseteq \tancone{\R_{\le \oshort{r}+r-\ushort{r}}^{m \times n}}{X}.
\end{equation}
Moreover, if $\ushort{r} < r$, then, for every $X \in \R_{\ushort{r}}^{m \times n}$, there exists a sequence $(X_i)_{i \in \N}$ in $\R_r^{m \times n}$ converging to $X$ such that
\begin{equation}
\label{eq:MinimalMaximalInnerOuterLimitsTangentConeRealDeterminantalVariety}
\tancone{\R_{\le \oshort{r}-r+\ushort{r}}^{m \times n}}{X}
= \inlim_{i \to \infty} \tancone{\R_{\le \oshort{r}}^{m \times n}}{X_i}
\subsetneq \outlim_{i \to \infty} \tancone{\R_{\le \oshort{r}}^{m \times n}}{X_i}
= \tancone{\R_{\le \oshort{r}+r-\ushort{r}}^{m \times n}}{X}.
\end{equation}
Thus, for every $X \in \R_{\ushort{r}}^{m \times n}$,
\begin{align}
\label{eq:RelativeInnerLimitTangentConeRealDeterminantalVariety}
\inlim_{\R_{\le r}^{m \times n} \ni Z \to X} \tancone{\R_{\le \oshort{r}}^{m \times n}}{Z}
&= \inlim_{\R_r^{m \times n} \ni Z \to X} \tancone{\R_{\le \oshort{r}}^{m \times n}}{Z}
= \tancone{\R_{\le \oshort{r}-r+\ushort{r}}^{m \times n}}{X},\\
\label{eq:RelativeOuterLimitTangentConeRealDeterminantalVariety}
\outlim_{\R_{\le r}^{m \times n} \ni Z \to X} \tancone{\R_{\le \oshort{r}}^{m \times n}}{Z}
&= \outlim_{\R_r^{m \times n} \ni Z \to X} \tancone{\R_{\le \oshort{r}}^{m \times n}}{Z}
= \tancone{\R_{\le \oshort{r}+r-\ushort{r}}^{m \times n}}{X}.
\end{align}
In particular, the correspondence $\tancone{\R_{\le \oshort{r}}^{m \times n}}{\cdot}$ is:
\begin{itemize}
\item continuous relative to $\R_{\le r}^{m \times n}$ at every $X \in \R_r^{m \times n}$;
\item neither inner nor outer semicontinuous relative to $\R_r^{m \times n}$ at every $X \in \R_{< r}^{m \times n}$.
\end{itemize}
\end{theorem}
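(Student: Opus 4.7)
My plan is to first establish the two-sided bound \eqref{eq:LowerUpperBoundsInnerOuterLimitsTangentConeRealDeterminantalVariety} for every sequence, then produce one specific sequence realizing the equalities in \eqref{eq:MinimalMaximalInnerOuterLimitsTangentConeRealDeterminantalVariety}; the relative-limit formulas \eqref{eq:RelativeInnerLimitTangentConeRealDeterminantalVariety}--\eqref{eq:RelativeOuterLimitTangentConeRealDeterminantalVariety} and the continuity claims then follow directly from the set-theoretic descriptions \eqref{eq:InnerLimitCorrespondence}--\eqref{eq:OuterLimitCorrespondence}. For the outer-limit inclusion in \eqref{eq:LowerUpperBoundsInnerOuterLimitsTangentConeRealDeterminantalVariety}, I would take $V$ in the outer limit, extract a subsequence with $V_i \in \tancone{\R_{\le \oshort{r}}^{m \times n}}{X_i}$ converging to $V$, and by pigeonhole plus compactness of Stiefel manifolds assume $\rank X_i = r' \in [\ushort{r}, r]$ is constant and that orthonormal bases $U_i, \tilde V_i$ of $\im X_i, \im X_i^\tp$ converge to limits $U^*, \tilde V^*$ whose first $\ushort{r}$ columns span $\im X, \im X^\tp$. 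By \eqref{eq:TangentConeRealDeterminantalVarietyMatrix}, $V_i$ has an $(m - r') \times (n - r')$ bottom-right block $D_i$ of rank $\le \oshort{r} - r'$; its limit (of rank $\le \oshort{r} - r'$ by lower semicontinuity of rank) appears as the $(2,2)$ corner of the bottom-right block $D$ of $V$ in the $\ushort{r}$-partition adapted to $X$, and Proposition~\ref{prop:BoundRank2*2BlockMatrices} applied with $k = r' - \ushort{r}$ and $s = \oshort{r} - r'$ yields $\rank D \le (r' - \ushort{r}) + (\oshort{r} - \ushort{r}) \le \oshort{r} + r - 2\ushort{r}$, which places $V$ in $\tancone{\R_{\le \oshort{r} + r - \ushort{r}}^{m \times n}}{X}$ via \eqref{eq:TangentConeRealDeterminantalVarietyMatrix}.

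For the inner-limit inclusion, I fix $V$ with bottom-right block $D$ of rank $\le \oshort{r} - r$ and any sequence $X_i \to X$ in $\R_{\le r}^{m \times n}$. Setting $P_i := X_i X_i^\mpinv$ and $Q_i := X_i^\mpinv X_i$, decomposition \eqref{eq:TangentConeRealDeterminantalVariety} is readily recast as ``$W \in \tancone{\R_{\le \oshort{r}}^{m \times n}}{X_i}$ iff $\rank((I - P_i) W (I - Q_i)) \le \oshort{r} - \rank X_i$''. Letting $M_i := (I - P_i) V (I - Q_i)$ and $\tilde M_i$ its best rank-$\le \oshort{r} - \rank X_i$ truncated-SVD approximation, $V_i := V - M_i + \tilde M_i$ lies in $\tancone{\R_{\le \oshort{r}}^{m \times n}}{X_i}$ with $\|V_i - V\| = \|M_i - \tilde M_i\| \le d(M_i, \R_{\le \oshort{r} - r}^{m \times n})$. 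Any subsequence of $(P_i, Q_i)$ converges to projections $(P^*, Q^*)$ onto subspaces containing $\im X, \im X^\tp$, so $M_i$ converges to $(I - P^*) U_\perp D \tilde V_\perp^\tp (I - Q^*)$, which has rank $\le \rank D \le \oshort{r} - r$; a standard subsequence-contradiction argument then forces $V_i \to V$ along the whole sequence.

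For the tightness \eqref{eq:MinimalMaximalInnerOuterLimitsTangentConeRealDeterminantalVariety}, I fix a countable dense sequence $(U^{(k)}, \tilde V^{(k)})_{k \in \N}$ in $\st(r - \ushort{r}, m - \ushort{r}) \times \st(r - \ushort{r}, n - \ushort{r})$ and an enumeration $k(\cdot): \N \to \N$ visiting each value infinitely often, and set $X_i := X + \tfrac{1}{i} U_\perp U^{(k(i))} (\tilde V^{(k(i))})^\tp \tilde V_\perp^\tp \in \R_r^{m \times n}$; the cone $C_k := \tancone{\R_{\le \oshort{r}}^{m \times n}}{X_i}$ depends only on $k = k(i)$. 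For the outer limit, given $V \in \tancone{\R_{\le \oshort{r} + r - \ushort{r}}^{m \times n}}{X}$, its block $D$ has $\rank D \le 2(r - \ushort{r}) + (\oshort{r} - r)$, so Proposition~\ref{prop:PermutationRank2*2BlockMatrices} (with $k = r - \ushort{r}$, $s = \oshort{r} - r$) exhibits orthogonal transforms under which the bottom-right $(m - r) \times (n - r)$ sub-block of $D$ has rank $\le \oshort{r} - r$; density of the enumeration approximates them along a subsequence, giving $V \in \outlim_i C_{k(i)}$. For the inner limit, if $\rank D > \oshort{r} - r$, then for a Zariski-open (thus dense) set of rotations the corresponding $(m - r) \times (n - r)$ sub-block has rank $\min(m - r, n - r, \rank D) > \oshort{r} - r$ (using $\oshort{r} < \min(m, n)$), so $d(V, C_{k(i)})$ is bounded below on a subsequence hitting that set, excluding $V$ from $\inlim_i C_{k(i)}$.

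Finally, the relative limits \eqref{eq:RelativeInnerLimitTangentConeRealDeterminantalVariety}--\eqref{eq:RelativeOuterLimitTangentConeRealDeterminantalVariety} follow by combining the per-sequence bounds with the specific sequence via \eqref{eq:InnerLimitCorrespondence}--\eqref{eq:OuterLimitCorrespondence}; continuity at $X \in \R_r^{m \times n}$ is the case $\ushort{r} = r$ (both bounds collapse to $\tancone{\R_{\le \oshort{r}}^{m \times n}}{X}$), and failure of both semicontinuities at $X \in \R_{< r}^{m \times n}$ is the case $\ushort{r} < r$ (both bounds strictly separated from $\tancone{\R_{\le \oshort{r}}^{m \times n}}{X}$). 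The main obstacle I anticipate is the tightness step: its outer-limit half hinges on the correct application of Proposition~\ref{prop:PermutationRank2*2BlockMatrices}, and its inner-limit half combines a genericity (Zariski-open) argument with the careful design of the enumeration $k(\cdot)$ to ensure maximal-rank sub-blocks are explored cofinally.
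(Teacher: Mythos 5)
Your proposal is correct in substance and shares the paper's overall architecture: a per-sequence two-sided bound, followed by a single dense-frame sequence realizing both bounds, with Propositions~\ref{prop:BoundRank2*2BlockMatrices} and \ref{prop:PermutationRank2*2BlockMatrices} invoked at exactly the same junctures, and the relative limits and (semi)continuity conclusions read off at the end as in the paper. The one genuinely different component is the inner-limit lower bound. The paper (Proposition~\ref{prop:LowerBoundInnerLimitTangentConeRealDeterminantalVariety}) transports the fixed block matrix of $\eta$ along explicitly constructed convergent frames (Lemma~\ref{lemma:ConvergenceTangentConeDecompositionConstantRank} applied to the best rank-$\ushort{r}$ approximations $\ushort{X}_i$) and then verifies tangency by a submatrix-rank count; you instead perturb $\eta$ itself, replacing its component $M_i=(I-P_i)\eta(I-Q_i)$ by its truncated SVD, so that the resulting $\eta_i$ is tangent at $X_i$ by construction and $\norm{\eta_i-\eta}=d\big(M_i,\R_{\le \oshort{r}-\rank X_i}^{m \times n}\big)\to 0$ because every cluster point of $(M_i)_{i\in\N}$ has rank at most $\oshort{r}-r$. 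This is a clean alternative that bypasses the frame bookkeeping entirely; its only cost is that the paper reuses the frame-transport argument mutatis mutandis for $\regtancone{\R_{\le \oshort{r}}^{m \times n}}{\cdot}$ in Corollary~\ref{coro:InnerOuterLimitsRegularTangentConeRealDeterminantalVariety}, whereas your truncation trick is specific to the Bouligand cone. Your inner-limit tightness argument via a Zariski-generic compression is the contrapositive of the paper's ``every $(m-r)\times(n-r)$ compression has rank at most $\oshort{r}-r$'' argument in Proposition~\ref{prop:MinimalInnerLimitTangentConeRealDeterminantalVariety}, hence essentially the same. Two points need tightening. First, in the outer-limit bound the claim that the limiting orthonormal bases have their first $\ushort{r}$ columns spanning $\im X$ does not follow from compactness alone: the bases of $\im X_i$ must be chosen adapted to the splitting $X_i=\ushort{X}_i+\tilde{X}_i$ into the best rank-$\ushort{r}$ part plus remainder, which is precisely the content of the paper's Lemma~\ref{lemma:ConvergenceTangentConeDecompositionDecreaseRank}. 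Second, your dense enumeration lives in $\st(r-\ushort{r},m-\ushort{r})\times\st(r-\ushort{r},n-\ushort{r})$ rather than in the full complement-frame space $\mathcal{U}_\perp\times\mathcal{V}_\perp$; this suffices for the inner-limit half (the relevant distance depends only on $\im X_i$ and $\im X_i^\tp$), but for the outer-limit half you still owe a short argument that the complements $U_{i\perp},V_{i\perp}$ can be made to converge to the prescribed $U_\perp',V_\perp'$ along the chosen subsequence — the paper's Lemma~\ref{lemma:DenseTangentConeDecompositionDecreaseRank} builds this in by taking the dense sequence directly in $\mathcal{U}_\perp\times\mathcal{V}_\perp$.
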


\subsection{Proof of Theorem~\ref{theorem:MainResult}}
By \eqref{eq:InnerLimitCorrespondence}, \eqref{eq:OuterLimitCorrespondence} and \eqref{eq:MonotonicityInnerOuterLimits}, \eqref{eq:RelativeInnerLimitTangentConeRealDeterminantalVariety} and \eqref{eq:RelativeOuterLimitTangentConeRealDeterminantalVariety} readily follow from \eqref{eq:LowerUpperBoundsInnerOuterLimitsTangentConeRealDeterminantalVariety} and \eqref{eq:MinimalMaximalInnerOuterLimitsTangentConeRealDeterminantalVariety}. Propositions~\ref{prop:LowerBoundInnerLimitTangentConeRealDeterminantalVariety}, \ref{prop:MinimalInnerLimitTangentConeRealDeterminantalVariety}, \ref{prop:UpperBoundOuterLimitTangentConeRealDeterminantalVariety} and \ref{prop:MaximalOuterLimitTangentConeRealDeterminantalVariety} respectively state the first inclusion of \eqref{eq:LowerUpperBoundsInnerOuterLimitsTangentConeRealDeterminantalVariety}, the first equality of \eqref{eq:MinimalMaximalInnerOuterLimitsTangentConeRealDeterminantalVariety}, the last inclusion of \eqref{eq:LowerUpperBoundsInnerOuterLimitsTangentConeRealDeterminantalVariety} and the last equality of \eqref{eq:MinimalMaximalInnerOuterLimitsTangentConeRealDeterminantalVariety}. Proposition~\ref{prop:RelativeContinuityTangentConeRealDeterminantalVariety} specifically focuses on the case where $\ushort{r} = r$.

In a nutshell, those five propositions concern inner and outer limits of $\big(\tancone{\R_{\le \oshort{r}}^{m \times n}}{X_i}\big)_{i \in \N}$ for sequences $(X_i)_{i \in \N}$ in $\R_{\le r}^{m \times n}$ converging to $X \in \R_{\ushort{r}}^{m \times n}$. More precisely, Propositions~\ref{prop:LowerBoundInnerLimitTangentConeRealDeterminantalVariety} and \ref{prop:UpperBoundOuterLimitTangentConeRealDeterminantalVariety} respectively provide a lower bound on the inner limits and an upper bound on the outer limits while Propositions~\ref{prop:MinimalInnerLimitTangentConeRealDeterminantalVariety} and \ref{prop:MaximalOuterLimitTangentConeRealDeterminantalVariety} show that these two bounds can be reached for a particular sequence $(X_i)_{i \in \N}$. This is summarized in Table~\ref{tab:SchemeProofMainResult}.
\begin{table}[h]
\begin{center}
\begin{spacing}{1.2}
\begin{tabular}{l|c|c|}
\cline{2-3}
& Inner limit & Outer limit\\
\hline
\multicolumn{1}{|l|}{Bound \eqref{eq:LowerUpperBoundsInnerOuterLimitsTangentConeRealDeterminantalVariety}} & Proposition~\ref{prop:LowerBoundInnerLimitTangentConeRealDeterminantalVariety} & Proposition~\ref{prop:UpperBoundOuterLimitTangentConeRealDeterminantalVariety}\\
\hline
\multicolumn{1}{|l|}{Reachability of the bound \eqref{eq:MinimalMaximalInnerOuterLimitsTangentConeRealDeterminantalVariety}} & Proposition~\ref{prop:MinimalInnerLimitTangentConeRealDeterminantalVariety} & Proposition~\ref{prop:MaximalOuterLimitTangentConeRealDeterminantalVariety}\\
\hline
\end{tabular}
\end{spacing}
\end{center}
\caption{Inner and outer limits of $\big(\tancone{\R_{\le \oshort{r}}^{m \times n}}{X_i}\big)_{i \in \N}$ for sequences $(X_i)_{i \in \N}$ in $\R_{\le r}^{m \times n}$ converging to $X \in \R_{\ushort{r}}^{m \times n}$ with $\ushort{r} < r$.}
%Scheme of the proof of Theorem~\ref{theorem:MainResult} for the case where $\ushort{r} < r$.
\label{tab:SchemeProofMainResult}
\end{table}

In view of the explicit formula \eqref{eq:TangentConeRealDeterminantalVarietyMatrix} for $\tancone{\R_{\le \oshort{r}}^{m \times n}}{\cdot}$, proving those five propositions requires to study the convergence of matrix representatives of $\im X_i$ and $\im X_i^\tp$ and of their respective orthogonal complements. We focus on that task in three lemmas on which our five propositions are based. Lemma~\ref{lemma:ConvergenceTangentConeDecompositionConstantRank} deals with the simple case where $\ushort{r} = r$ and serves as a basis for Propositions~\ref{prop:LowerBoundInnerLimitTangentConeRealDeterminantalVariety} and \ref{prop:RelativeContinuityTangentConeRealDeterminantalVariety} while Lemmas~\ref{lemma:ConvergenceTangentConeDecompositionDecreaseRank} and \ref{lemma:DenseTangentConeDecompositionDecreaseRank} both consider the case where $\ushort{r} < r$. Lemma~\ref{lemma:ConvergenceTangentConeDecompositionDecreaseRank} plays a prominent role in the proof of Proposition~\ref{prop:UpperBoundOuterLimitTangentConeRealDeterminantalVariety} whereas Lemma~\ref{lemma:DenseTangentConeDecompositionDecreaseRank} provides the particular sequence mentioned in \eqref{eq:MinimalMaximalInnerOuterLimitsTangentConeRealDeterminantalVariety}.

\begin{lemma}
\label{lemma:ConvergenceTangentConeDecompositionConstantRank}
Let $(X_i)_{i \in \N}$ be a sequence in $\R_r^{m \times n}$ converging to $X \in \R_r^{m \times n}$ and $$X = [U \; U_\perp] \begin{bmatrix} \Sigma & \\ & 0_{m-r \times n-r} \end{bmatrix} [V \; V_\perp]^\tp$$ be an SVD.
Then, there exist sequences $(U_i)_{i \in \N}$ in $\R_*^{m \times r}$, $(U_{i\perp})_{i \in \N}$ in $\R_*^{m \times m-r}$, $(V_i)_{i \in \N}$ in $\R_*^{n \times r}$ and $(V_{i\perp})_{i \in \N}$ in $\R_*^{n \times n-r}$ respectively converging to $U$, $U_\perp$, $V$ and $V_\perp$, and such that, for every $i \in \N$, $\im U_i = \im X_i$, $\im U_{i\perp} = (\im X_i)^\perp$, $\im V_i = \im X_i^\tp$ and $\im V_{i\perp} = (\im X_i^\tp)^\perp$.
\end{lemma}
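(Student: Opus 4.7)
The plan is to exploit the continuity of the Moore--Penrose pseudoinverse on $\R_r^{m \times n}$ (stated in Section~\ref{subsec:EuclideanVectorSpaceRealMatrices}) to transport the given bases $U$, $U_\perp$, $V$, $V_\perp$ along the sequence via the associated orthogonal projections. Concretely, since $X_i \to X$ in $\R_r^{m \times n}$, we have $X_i^\mpinv \to X^\mpinv$, and therefore the four orthogonal projections
\begin{align*}
X_i X_i^\mpinv \to X X^\mpinv = U U^\tp,
&\quad
I_m - X_i X_i^\mpinv \to U_\perp U_\perp^\tp,\\
X_i^\mpinv X_i \to X^\mpinv X = V V^\tp,
&\quad
I_n - X_i^\mpinv X_i \to V_\perp V_\perp^\tp.
\end{align*}

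Then I would define the candidate representatives by applying these projections to the target bases:
\[
\widetilde U_i := X_i X_i^\mpinv U, \quad
\widetilde U_{i\perp} := (I_m - X_i X_i^\mpinv) U_\perp, \quad
\widetilde V_i := X_i^\mpinv X_i V, \quad
\widetilde V_{i\perp} := (I_n - X_i^\mpinv X_i) V_\perp.
\]
By the four limits above and the identities $U U^\tp U = U$, $U_\perp U_\perp^\tp U_\perp = U_\perp$, $V V^\tp V = V$, $V_\perp V_\perp^\tp V_\perp = V_\perp$, these four sequences converge to $U$, $U_\perp$, $V$, $V_\perp$ respectively. By construction, the columns of $\widetilde U_i$ lie in $\im X_i X_i^\mpinv = \im X_i$, those of $\widetilde U_{i\perp}$ lie in $(\im X_i)^\perp$, and similarly on the right.

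The last point to check is that each $\widetilde U_i$, $\widetilde U_{i\perp}$, $\widetilde V_i$, $\widetilde V_{i\perp}$ has full column rank (and hence, by the dimension count, its column span coincides with the entire subspace in which it lives) for all $i \in \N$. This is where a small argument is needed: since the rank is lower semicontinuous and the four limits $U$, $U_\perp$, $V$, $V_\perp$ have respective ranks $r$, $m-r$, $r$, $n-r$, there exists $N \in \N$ such that $\widetilde U_i, \widetilde V_i \in \R_*^{m \times r}, \R_*^{n \times r}$ and $\widetilde U_{i\perp} \in \R_*^{m \times m-r}$, $\widetilde V_{i\perp} \in \R_*^{n \times n-r}$ for every $i \ge N$; then $\im \widetilde U_i = \im X_i$, $\im \widetilde U_{i\perp} = (\im X_i)^\perp$, $\im \widetilde V_i = \im X_i^\tp$, $\im \widetilde V_{i\perp} = (\im X_i^\tp)^\perp$ follow from the inclusion and the matching dimensions. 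For the finitely many indices $i < N$, I would simply redefine $U_i$, $U_{i\perp}$, $V_i$, $V_{i\perp}$ as arbitrary full-rank matrices with the prescribed images (which exist because $X_i \in \R_r^{m \times n}$), which does not affect the limits. I do not expect any genuine obstacle: the only subtlety is handling the first few indices where $\widetilde U_i$ may fail to be full-rank, but this is dispatched by a lower-semicontinuity-of-rank argument.
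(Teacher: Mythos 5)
Your proposal is correct and follows essentially the same route as the paper: both define the representatives by projecting $U$, $U_\perp$, $V$, $V_\perp$ with $X_i X_i^\mpinv$ and $X_i^\mpinv X_i$, invoke continuity of the pseudoinverse on $\R_r^{m \times n}$ for convergence, and fix the finitely many initial indices by an arbitrary choice. The only cosmetic difference is how you certify eventual full column rank (lower semicontinuity of the rank versus the paper's explicit observation that $B(U,1) \subseteq \R_*^{m \times r}$ since $d(U,\R_{<r}^{m \times r}) = 1$); these are interchangeable.
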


\begin{proof}
We define the required sequences using orthogonal projections. For example, for every $i \in \N$ large enough, $U_i$ will be defined as the orthogonal projection of $U$ onto $\im X_i$, that is $(X_i X_i^\mpinv) U$. Indeed, by continuity, every sequence defined that way converges to $(X X^\mpinv) U = U$. Furthermore, $\im (X_i X_i^\mpinv U) \subseteq \im X_i$ for every $i \in \N$ and this inclusion actually becomes an equality when $i$ is large enough since $\rank (X_i X_i^\mpinv U)$ then becomes equal to $r$. Indeed, since $U \in \st(r,m)$, all its singular values are equal to $1$ and therefore $d(U,\R_{< r}^{m \times r}) = 1$, which implies in particular that $B(U,1) \subseteq \R_*^{m \times r}$. However, since $(X_i X_i^\mpinv U)_{i\in\N}$ converges to $U$, there exists $i_* \in \N$ such that $X_i X_i^\mpinv U \in B(U,1)$ for every integer $i > i_*$. In conclusion, we define $U_i := X_i X_i^\mpinv U$ for every integer $i > i_*$ and choose $U_i \in \R_*^{m \times r}$ such that $\im U_i = \im X_i$ for every $i \in \{0,\dots, i_*\}$.
The same process can be used to define the other required sequences: for every $i \in \N$ large enough, we define $U_{i\perp} := (I_m - X_i X_i^\mpinv) U_\perp$, $V_i := (X_i^\mpinv X_i) V$ and $V_{i\perp} := (I_n - X_i^\mpinv X_i) V_\perp$, and complete the definition for the other indices in order to meet the desired conditions.
%properties or conditions?
\end{proof}

\begin{lemma}
\label{lemma:ConvergenceTangentConeDecompositionDecreaseRank}
Let $\ushort{r} < r$, $(X_i)_{i \in \N}$ be a sequence in $\R_r^{m \times n}$ converging to $X \in \R_{\ushort{r}}^{m \times n}$ and $X = U \Sigma V^\tp$ be a thin SVD.
Then, there exist
$\bar{U}_\perp \in \st(r-\ushort{r},m)$, $U_\perp \in \st(m-r,m)$, $\bar{V}_\perp \in \st(r-\ushort{r},n)$, $V_\perp \in \st(n-r,n)$,
a strictly increasing sequence $(i_k)_{k \in \N}$ in $\N$,
and sequences $(U_i)_{i \in \N}$ in $\R_*^{m \times \ushort{r}}$, $(\bar{U}_i)_{i \in \N}$ in $\R_*^{m \times r-\ushort{r}}$, $(U_{i\perp})_{i \in \N}$ in $\R_*^{m \times m-r}$, $(V_i)_{i \in \N}$ in $\R_*^{n \times \ushort{r}}$, $(\bar{V}_i)_{i \in \N}$ in $\R_*^{n \times r-\ushort{r}}$ and $(V_{i\perp})_{i \in \N}$ in $\R_*^{n \times n-r}$
satisfying the following properties:
\begin{enumerate}
\item $\im U = \im X$, $\im [\bar{U}_\perp \; U_\perp] = (\im X)^\perp$, $\im V = \im X^\tp$ and $\im [\bar{V}_\perp \; V_\perp] = (\im X^\tp)^\perp$;
\item for all $i \in \N$, $\im [U_i \; \bar{U}_i] = \im X_i$, $\im U_{i\perp} = (\im X_i)^\perp$, $\im [V_i \; \bar{V}_i] = \im X_i^\tp$ and $\im V_{i\perp} = (\im X_i^\tp)^\perp$;
\item $\lim\limits_{i \to \infty} U_i = U$, $\lim\limits_{k \to \infty} \bar{U}_{i_k} = \bar{U}_\perp$, $\lim\limits_{k \to \infty} U_{i_k\perp} = U_\perp$, $\lim\limits_{i \to \infty} V_i = V$, $\lim\limits_{k \to \infty} \bar{V}_{i_k} = \bar{V}_\perp$, $\lim\limits_{k \to \infty} V_{i_k\perp} = V_\perp$.
\end{enumerate}
\end{lemma}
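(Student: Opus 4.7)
The plan is to extend the projection-based construction of Lemma~\ref{lemma:ConvergenceTangentConeDecompositionConstantRank} by first extracting a subsequence along which the ``excess'' subspaces produced by the rank gap $r - \ushort{r}$ stabilize in the Grassmannian. Since $\grass(r,m) \times \grass(r,n)$ is compact, I would extract $(i_k)_{k \in \N}$ such that $\im X_{i_k} \to \mathcal{V}$ and $\im X_{i_k}^\tp \to \mathcal{W}$ for some $r$-dimensional subspaces $\mathcal{V} \subseteq \R^m$ and $\mathcal{W} \subseteq \R^n$; equivalently, the orthogonal projectors $P_{i_k} := X_{i_k} X_{i_k}^\mpinv$ and $Q_{i_k} := X_{i_k}^\mpinv X_{i_k}$ converge to the projectors $P_\mathcal{V}$ and $P_\mathcal{W}$ onto $\mathcal{V}$ and $\mathcal{W}$. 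Since $X_i \to X$, every $y = Xz \in \im X$ is the limit of $X_i z \in \im X_i$, so $\im X \subseteq \mathcal{V}$, and the same argument applied to transposes yields $\im X^\tp \subseteq \mathcal{W}$. I would then pick $\bar{U}_\perp \in \st(r-\ushort{r},m)$ with $\im \bar{U}_\perp = \mathcal{V} \cap (\im X)^\perp$ and $U_\perp \in \st(m-r,m)$ with $\im U_\perp = \mathcal{V}^\perp$, so that $[U \; \bar{U}_\perp \; U_\perp] \in \mathcal{O}_m$; $\bar{V}_\perp$ and $V_\perp$ would be defined analogously from $\mathcal{W}$, which establishes property~1.

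For the sequences, I would mimic the proof of Lemma~\ref{lemma:ConvergenceTangentConeDecompositionConstantRank}: for each sufficiently large $i$, set $U_i := P_i U$, $\bar{U}_i := P_i \bar{U}_\perp$, $U_{i\perp} := (I_m - P_i) U_\perp$, $V_i := Q_i V$, $\bar{V}_i := Q_i \bar{V}_\perp$, $V_{i\perp} := (I_n - Q_i) V_\perp$, and choose any admissible matrices with the required images for the remaining finitely many indices. The key convergence fact, already used in Lemma~\ref{lemma:ConvergenceTangentConeDecompositionConstantRank}, is that $d(y, \im X_i) \to 0$ for any $y \in \im X$, whence $P_i y \to y$; this yields $U_i \to U$ and $V_i \to V$ along the \emph{full} sequence $(X_i)_{i \in \N}$. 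Along the subsequence $(i_k)$, $P_{i_k} \to P_\mathcal{V}$ then gives $\bar{U}_{i_k} \to P_\mathcal{V} \bar{U}_\perp = \bar{U}_\perp$ and $U_{i_k\perp} \to (I_m - P_\mathcal{V}) U_\perp = U_\perp$ (using $\im \bar{U}_\perp \subseteq \mathcal{V}$ and $\im U_\perp \subseteq \mathcal{V}^\perp$), and similarly on the $V$ side. The rank/image conditions of property~2 follow because the targets $U, \bar{U}_\perp, U_\perp$ have orthonormal columns, hence lie at distance~$1$ from any rank-deficient matrix by Eckart--Young; this guarantees full column rank of the approximants for large $i$, and since the columns are by construction in the correct subspace, a dimension count promotes ``contained in $\im X_i$'' to ``equal to $\im X_i$''.

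The main obstacle is the asymmetry in the convergence requirements: $U_i \to U$ and $V_i \to V$ must hold for all $i$, while the other four sequences need only converge along a common subsequence. This forces the Grassmannian subsequence extraction to be carried out exactly once, simultaneously for $\im X_i$ and $\im X_i^\tp$, and the limit subspaces $\mathcal{V}, \mathcal{W}$ must be shown to contain $\im X$ and $\im X^\tp$ so that the projection formulas $U_i = P_i U$ and $V_i = Q_i V$ yield convergent sequences with no further subsequencing needed.
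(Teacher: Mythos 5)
Your overall strategy is sound and genuinely differs from the paper's: the paper splits $X_i = \ushort{X}_i + \tilde{X}_i$ using the best rank-$\ushort{r}$ approximation and extracts subsequential limits of the projectors $\tilde{X}_i\tilde{X}_i^\mpinv$ and $\tilde{X}_i^\mpinv\tilde{X}_i$ (so that $\im U_i = \im\ushort{X}_i$ and $\im\bar{U}_i = \im\tilde{X}_i$ are orthogonal by construction), whereas you extract a Grassmannian limit $\mathcal{V}$ of $\im X_{i_k}$ directly and take $\bar{U}_\perp$ to span $\mathcal{V}\cap(\im X)^\perp$. Your verification of property 1, the inclusion $\im X\subseteq\mathcal{V}$, and the convergence $U_i = P_iU\to U$ along the full sequence (via $\norm{P_iy - y} = d(y,\im X_i)\le\norm{X_i-X}\,\norm{z}$ for $y = Xz$) are all correct, and your closing remark correctly isolates why $U_i,V_i$ need no subsequencing while the other four sequences do.

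The gap is in property 2 for the indices off the subsequence. You define $\bar{U}_i := P_i\bar{U}_\perp$ and $U_{i\perp} := (I_m - P_i)U_\perp$ for \emph{every} sufficiently large $i$ and claim the remaining indices are finitely many, with full column rank for large $i$ guaranteed by Eckart--Young. But that argument requires the approximant to be within distance $1$ of its orthonormal target, and $P_i\bar{U}_\perp\to\bar{U}_\perp$, $(I_m-P_i)U_\perp\to U_\perp$ hold only along the subsequence $(i_k)$, since only $P_{i_k}\to P_\mathcal{V}$. For the possibly infinitely many large indices not of the form $i_k$, $\im X_i$ can stay far from $\mathcal{V}$ and $P_i\bar{U}_\perp$ can be rank-deficient or even zero; e.g.\ with $m=n=3$, $\ushort{r}=1$, $r=2$, $X=e_1e_1^\tp$, $X_i=e_1e_1^\tp+\tfrac{1}{i}e_2e_2^\tp$ for even $i$ and $X_i=e_1e_1^\tp+\tfrac{1}{i}e_3e_3^\tp$ for odd $i$, the even subsequence gives $\bar{U}_\perp=e_2$ while $P_ie_2=0$ for every odd $i$, so $\im[U_i\;\bar{U}_i]\ne\im X_i$ there. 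The fix is the one the paper uses: apply the projection formulas only for $i=i_k$ with $k$ large, and for every other index simply choose arbitrary full-rank matrices with the images prescribed by property 2 (such choices always exist and are not required to converge to anything). With that correction your proof goes through.
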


\begin{proof}
For every $i \in \N$, let ${\displaystyle \ushort{X}_i \in \argmin\limits_{\ushort{X} \in \R_{\ushort{r}}^{m \times n}} \norm{X_i-\ushort{X}}}$ and $\tilde{X}_i := X_i - \ushort{X}_i \in \R_{r-\ushort{r}}^{m \times n}$.
Observe that:
\begin{itemize}
\item $\ushort{X}_i \to X$ and $\ushort{X}_i^\mpinv \to X^\mpinv$ as $i \to \infty$;
\item $\tilde{X}_i \to 0_{m \times n}$ as $i \to \infty$;
\item $\ushort{X}_i^\tp \tilde{X}_i = 0_n$ and $\im \ushort{X}_i \oplus \im \tilde{X}_i = \im X_i$ for every $i \in \N$.
%For the orthogonality property, just write the SVD.
\end{itemize}
Since $(\ushort{X}_i \ushort{X}_i^\mpinv U)_{i \in \N}$ and $(\ushort{X}_i^\mpinv \ushort{X}_i V)_{i \in \N}$ respectively converge to $U \in \st(\ushort{r},m)$ and $V \in \st(\ushort{r},n)$, there exists $i_* \in \N$ such that $\ushort{X}_i \ushort{X}_i^\mpinv U \in B(U,1)$ and $\ushort{X}_i^\mpinv \ushort{X}_i V \in B(V,1)$ for every integer $i > i_*$.
For every $i \in \{0,\dots, i_*\}$, we choose $U_i \in \R_*^{m \times \ushort{r}}$ such that $\im U_i = \im \ushort{X}_i$ and $V_i \in \R_*^{n \times \ushort{r}}$ such that $\im V_i = \im \ushort{X}_i^\tp$.
For every integer $i > i_*$, we define $U_i := \ushort{X}_i \ushort{X}_i^\mpinv U \in \R_*^{m \times \ushort{r}}$ and $V_i := \ushort{X}_i^\mpinv \ushort{X}_i V \in \R_*^{n \times \ushort{r}}$.

As $(\tilde{X}_i \tilde{X}_i^\mpinv)_{i \in \N}$ is a sequence of orthogonal projections in $\R_{r-\ushort{r}}^{m \times m}$, it contains a subsequence $(\tilde{X}_{j_k} \tilde{X}_{j_k}^\mpinv)_{k \in \N}$ converging to an orthogonal projection that can be written as $\bar{U}_\perp \bar{U}_\perp^\tp$ with $\bar{U}_\perp \in \st(r-\ushort{r},m)$.
It holds that $\im \bar{U}_\perp \subseteq (\im X)^\perp$ since $\bar{U}_\perp^\tp X = 0_{r-\ushort{r} \times n}$ as $\tilde{X}_i \tilde{X}_i^\mpinv \ushort{X}_i = 0_{m \times n}$ for every $i \in \N$.
In the same way, $(\tilde{X}_{j_k}^\mpinv \tilde{X}_{j_k})_{k \in \N}$ is a sequence of orthogonal projections in $\R_{r-\ushort{r}}^{n \times n}$ and therefore contains a subsequence $(\tilde{X}_{i_k}^\mpinv \tilde{X}_{i_k})_{k \in \N}$ converging to an orthogonal projection that can be written as $\bar{V}_\perp \bar{V}_\perp^\tp$ with $\bar{V}_\perp \in \st(r-\ushort{r},n)$ and $\im \bar{V}_\perp \subseteq (\im X^\tp)^\perp$.

Since $(\tilde{X}_{i_k} \tilde{X}_{i_k}^\mpinv \bar{U}_\perp)_{k \in \N}$ and $(\tilde{X}_{i_k}^\mpinv \tilde{X}_{i_k} \bar{V}_\perp)_{k \in \N}$ respectively converge to $\bar{U}_\perp \in \st(r-\ushort{r},m)$ and $\bar{V}_\perp \in \st(r-\ushort{r},n)$, there exists $k_* \in \N$ such that $\tilde{X}_{i_k} \tilde{X}_{i_k}^\mpinv \bar{U}_\perp \in B(\bar{U}_\perp,1)$ and $\tilde{X}_{i_k}^\mpinv \tilde{X}_{i_k} \bar{V}_\perp \in B(\bar{V}_\perp,1)$ for every integer $k > k_*$.
For every integer $k > k_*$, we define $\bar{U}_{i_k} := \tilde{X}_{i_k} \tilde{X}_{i_k}^\mpinv \bar{U}_\perp \in \R_*^{m \times r-\ushort{r}}$ and $\bar{V}_{i_k} := \tilde{X}_{i_k}^\mpinv \tilde{X}_{i_k} \bar{V}_\perp \in \R_*^{n \times r-\ushort{r}}$.
We complete those definitions to obtain sequences $(\bar{U}_i)_{i \in \N}$ and $(\bar{V}_i)_{i \in \N}$ satisfying the required properties: for every $i \in \N \setminus \{i_k \mid k \in \N,\, k > k_*\}$, we choose $\bar{U}_i \in \R_*^{m \times r-\ushort{r}}$ such that $\im \bar{U}_i = \im \tilde{X}_i$ and $\bar{V}_i \in \R_*^{n \times r-\ushort{r}}$ such that $\im \bar{V}_i = \im \tilde{X}_i^\tp$.

Let $U_\perp \in \st(m-r,m)$ and $V_\perp \in \st(n-r,n)$ be such that $\im U_\perp = (\im [U \; \bar{U}_\perp])^\perp$ and $\im V_\perp = (\im [V \; \bar{V}_\perp])^\perp$.
For every $k \in \N$ large enough, we define $U_{i_k\perp} := (I_m - \ushort{X}_{i_k} \ushort{X}_{i_k}^\mpinv - \tilde{X}_{i_k} \tilde{X}_{i_k}^\mpinv) U_\perp \in \R_*^{m \times m-r}$ and $V_{i_k\perp} := (I_n - \ushort{X}_{i_k}^\mpinv \ushort{X}_{i_k} - \tilde{X}_{i_k}^\mpinv \tilde{X}_{i_k}) V_\perp \in \R_*^{n \times n-r}$, and complete these definitions to obtain sequences $(U_{i\perp})_{i \in \N}$ and $(V_{i\perp})_{i \in \N}$ satisfying the required properties.
\end{proof}

\begin{lemma}
\label{lemma:DenseTangentConeDecompositionDecreaseRank}
For every $X \in \R_{\ushort{r}}^{m \times n}$, if $\ushort{r} < r$, $X = U \diag(\sigma_1, \dots, \sigma_{\ushort{r}}) V^\tp$ is a thin SVD,
$
\mathcal{U}_\perp := \{\tilde{U} \in \st(m-\ushort{r},m) \mid \im \tilde{U} = (\im U)^\perp\}
$
and 
$\mathcal{V}_\perp := \{\tilde{V} \in \st(n-\ushort{r},n) \mid \im \tilde{V} = (\im V)^\perp\}
$,
then there exist sequences $(X_i)_{i \in \N}$ in $\R_r^{m \times n}$ and $(([\bar{U}_i \; U_{i\perp}],\, [\bar{V}_i \; V_{i\perp}]))_{i \in \N}$ in $\mathcal{U}_\perp \times \mathcal{V}_\perp$ such that:
\begin{enumerate}
\item $(X_i)_{i \in \N}$ converges to $X$;
\item for all $i \in \N$, $\im [U \; \bar{U}_i] = \im X_i$, $\im U_{i\perp} = (\im X_i)^\perp$, $\im [V \; \bar{V}_i] = \im X_i^\tp$ and $\im V_{i\perp} = (\im X_i^\tp)^\perp$;
\item the set of cluster points of $(([\bar{U}_i \; U_{i\perp}],\, [\bar{V}_i \; V_{i\perp}]))_{i \in \N}$ is $\mathcal{U}_\perp \times \mathcal{V}_\perp$.
\end{enumerate}
\end{lemma}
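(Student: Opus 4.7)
The plan is to construct $(X_i)_{i\in\N}$ as a vanishing rank-$(r-\ushort{r})$ perturbation of $X$ whose left and right factors cycle through a countable dense subset of $\mathcal{U}_\perp \times \mathcal{V}_\perp$ infinitely often. To parametrize the perturbations, I observe that every pair $(\tilde U,\tilde V) \in \mathcal{U}_\perp \times \mathcal{V}_\perp$ can be block-partitioned column-wise as $\tilde U = [\bar U\; U_\perp]$ and $\tilde V = [\bar V\; V_\perp]$ with $\bar U \in \st(r-\ushort{r},m)$, $U_\perp \in \st(m-r,m)$, $\bar V \in \st(r-\ushort{r},n)$, $V_\perp \in \st(n-r,n)$, and that $\mathcal{U}_\perp \times \mathcal{V}_\perp$ is a closed bounded subset of a finite-dimensional Euclidean space, hence compact and separable. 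First, I would fix once and for all a countable dense subset $\{(\tilde U_j,\tilde V_j)\}_{j\in\N}$ of $\mathcal{U}_\perp \times \mathcal{V}_\perp$ with corresponding splittings $\tilde U_j = [\bar U_j^*\; U_{j\perp}^*]$ and $\tilde V_j = [\bar V_j^*\; V_{j\perp}^*]$, together with a surjection $\phi:\N\to\N$ such that $\phi^{-1}(\{j\})$ is infinite for every $j \in \N$ (e.g., the standard diagonal enumeration of $\N\times\N$ composed with second-coordinate projection).

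With this preparation in place, I would define
\[
X_i := X + \tfrac{1}{i+1}\, \bar U_{\phi(i)}^*\, \bigl(\bar V_{\phi(i)}^*\bigr)^\tp, \qquad \bar U_i := \bar U_{\phi(i)}^*,\; U_{i\perp} := U_{\phi(i)\perp}^*,\; \bar V_i := \bar V_{\phi(i)}^*,\; V_{i\perp} := V_{\phi(i)\perp}^*.
\]
Since $\im U \perp \im \bar U_{\phi(i)}^*$ and $\im V \perp \im \bar V_{\phi(i)}^*$ by the definitions of $\mathcal{U}_\perp$ and $\mathcal{V}_\perp$, the formula above yields a thin SVD of $X_i$ with singular values $\sigma_1,\dots,\sigma_{\ushort r},\tfrac{1}{i+1},\dots,\tfrac{1}{i+1}$, so $X_i \in \R_r^{m \times n}$, and the perturbation has Frobenius norm $\tfrac{\sqrt{r-\ushort{r}}}{i+1} \to 0$, establishing item~(1). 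Item~(2) follows from the same decomposition: the columns of $[U\;\bar U_i]$ are orthonormal and span $\im X_i$, and $[U\;\bar U_i\;U_{i\perp}]$ is the columnwise concatenation of $U$ with the element $\tilde U_{\phi(i)} \in \mathcal{U}_\perp$, hence belongs to $\mathcal{O}_m$; the argument for the $V$-side is identical.

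Finally, to establish item~(3), let $C$ denote the set of cluster points of $(([\bar U_i\;U_{i\perp}],[\bar V_i\;V_{i\perp}]))_{i\in\N}$. Since $\mathcal{U}_\perp \times \mathcal{V}_\perp$ is closed in $\R^{m\times(m-\ushort{r})}\times\R^{n\times(n-\ushort{r})}$ and the sequence lies in it, $C \subseteq \mathcal{U}_\perp \times \mathcal{V}_\perp$. Conversely, since $\phi^{-1}(\{j\})$ is infinite for every $j$, the subsequence along these indices is the constant $(\tilde U_j,\tilde V_j)$, so $\{(\tilde U_j,\tilde V_j)\}_{j\in\N} \subseteq C$; as $C$ is closed and $\{(\tilde U_j,\tilde V_j)\}_{j\in\N}$ is dense, $C = \mathcal{U}_\perp\times\mathcal{V}_\perp$. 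The main subtlety is this density-of-cluster-points argument, which hinges on the surjection-with-infinite-fibers device that revisits every element of the dense subset infinitely often while the amplitude $\tfrac{1}{i+1}$ keeps decreasing; the rank and orthogonality checks are routine linear algebra once the block structure is in place, and the hypothesis $\ushort{r}<r$ is what makes the rank-$(r-\ushort{r})$ perturbation nontrivial and the sets $\mathcal{U}_\perp,\mathcal{V}_\perp$ carry genuine freedom beyond a single choice of complement.
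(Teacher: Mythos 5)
Your proposal is correct and follows essentially the same route as the paper: the paper invokes separability of $\mathcal{U}_\perp \times \mathcal{V}_\perp$ to obtain a sequence whose cluster-point set is all of $\mathcal{U}_\perp \times \mathcal{V}_\perp$ (your surjection-with-infinite-fibers device is exactly how one builds such a sequence), and then sets $X_i := X + \frac{\sigma_{\ushort{r}}}{i+1}\bar{U}_i\bar{V}_i^\tp$, which matches your construction up to the harmless choice of scaling factor. You simply spell out the verification steps that the paper leaves implicit.
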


\begin{proof}
In view of \cite[Definition~1.2.17 and Proposition~1.2.18]{Willem}, the set $\mathcal{U}_\perp \times \mathcal{V}_\perp$ is separable and therefore contains a sequence $(([\bar{U}_i \; U_{i\perp}],\, [\bar{V}_i \; V_{i\perp}]))_{i \in \N}$ the set of cluster points of which is exactly $\mathcal{U}_\perp \times \mathcal{V}_\perp$.
Then, defining $X_i := X + \frac{\sigma_{\ushort{r}}}{i+1} \bar{U}_i \bar{V}_i^\tp$ for every $i \in \N$ yields a sequence $(X_i)_{i \in \N}$ satisfying the required properties.
%The equality $\rank X_i = \rank X + \rank \bar{U}_i \bar{V}_i^\tp$ holds by definition of $\mathcal{U}_\perp \times \mathcal{V}_\perp$.
\end{proof}

\begin{proposition}
\label{prop:LowerBoundInnerLimitTangentConeRealDeterminantalVariety}
For every sequence $(X_i)_{i \in \N}$ in $\R_{\le r}^{m \times n}$ converging to $X \in \R_{\ushort{r}}^{m \times n}$,
\begin{equation*}
\inlim_{i \to \infty} \tancone{\R_{\le \oshort{r}}^{m \times n}}{X_i} \supseteq \tancone{\R_{\le \oshort{r}-r+\ushort{r}}^{m \times n}}{X}.
\end{equation*}
\end{proposition}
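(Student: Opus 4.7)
The plan is to produce, for every $\eta \in \tancone{\R_{\le \oshort{r}-r+\ushort{r}}^{m \times n}}{X}$, a sequence $(\eta_i)_{i \in \N}$ with $\eta_i \in \tancone{\R_{\le \oshort{r}}^{m \times n}}{X_i}$ and $\eta_i \to \eta$, from which $d(\eta, \tancone{\R_{\le \oshort{r}}^{m \times n}}{X_i}) \to 0$, hence $\eta$ lies in the inner limit. Since convergence of a nonnegative real sequence to $0$ is characterized by every subsequence admitting a sub-subsequence along which the limit is $0$, I may freely extract subsequences of $(X_i)_{i \in \N}$. By lower semicontinuity of the rank combined with the hypothesis $\rank X_i \le r$, along a subsequence I may assume $\rank X_i \equiv r'$ for some $r' \in \{\ushort{r}, \ldots, r\}$.

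Depending on whether $r' = \ushort{r}$ or $r' > \ushort{r}$, I invoke Lemma~\ref{lemma:ConvergenceTangentConeDecompositionConstantRank} or Lemma~\ref{lemma:ConvergenceTangentConeDecompositionDecreaseRank} (with the latter's $r$ replaced by $r'$), extracting a further subsequence if needed. This yields $U_i \in \R_*^{m \times \ushort{r}}$, $\bar{U}_i \in \R_*^{m \times r'-\ushort{r}}$, $U_{i\perp} \in \R_*^{m \times m-r'}$ with $\im [U_i\; \bar{U}_i] = \im X_i$ and $\im U_{i\perp} = (\im X_i)^\perp$, converging respectively to $U \in \st(\ushort{r},m)$, $\bar{U}_\perp \in \st(r'-\ushort{r},m)$ and $U_\perp \in \st(m-r',m)$ with $\im U = \im X$ and $\im [\bar{U}_\perp\; U_\perp] = (\im X)^\perp$; analogous row-side matrices $V_i, \bar{V}_i, V_{i\perp}$ and $V, \bar{V}_\perp, V_\perp$ are obtained simultaneously (when $r' = \ushort{r}$, the barred factors can be taken empty, so that the decomposition below degenerates to a $2 \times 2$ block form). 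Applying Theorem~\ref{theorem:TangentAndNormalConesRealDeterminantalVariety} at $X \in \R_{\ushort{r}}^{m \times n}$ with $\oshort{r}$ replaced by $\oshort{r}-r+\ushort{r}$, I decompose
\[
\eta = [U\; \bar{U}_\perp\; U_\perp] \begin{bmatrix} A & B_1 & B_2 \\ C_1 & D_{11} & D_{12} \\ C_2 & D_{21} & D_{22} \end{bmatrix} [V\; \bar{V}_\perp\; V_\perp]^\tp
\]
with $\rank \left[\begin{smallmatrix} D_{11} & D_{12} \\ D_{21} & D_{22} \end{smallmatrix}\right] \le \oshort{r}-r$, and define $\eta_i$ by the same block matrix but with the $X_i$-adapted factors. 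Continuity of matrix multiplication then gives $\eta_i \to \eta$.

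It remains to verify $\eta_i \in \tancone{\R_{\le \oshort{r}}^{m \times n}}{X_i}$. Regrouping $[U_i\; \bar{U}_i]$ (resp.\ $[V_i\; \bar{V}_i]$) as a single block representing $\im X_i$ (resp.\ $\im X_i^\tp$) exhibits $\eta_i$ in the form \eqref{eq:TangentConeRealDeterminantalVarietyMatrix} at $X_i \in \R_{r'}^{m \times n}$, whose $(m-r') \times (n-r')$ lower-right block equals $D_{22}$. Since $D_{22}$ is a submatrix of the constrained block above, $\rank D_{22} \le \oshort{r}-r$; and since $r' \le r$ implies $\oshort{r}-r \le \oshort{r}-r'$, Theorem~\ref{theorem:TangentAndNormalConesRealDeterminantalVariety} yields $\eta_i \in \tancone{\R_{\le \oshort{r}}^{m \times n}}{X_i}$. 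The principal technical obstacle is the selection of bases for $(\im X_i)^\perp$ and $(\im X_i^\tp)^\perp$ that converge in a manner compatible with the SVD of $X$, particularly when the rank of $X_i$ strictly exceeds that of $X$ so that bases of $(\im X_i)^\perp$ cannot approach bases of $(\im X)^\perp$ directly; Lemmas~\ref{lemma:ConvergenceTangentConeDecompositionConstantRank} and \ref{lemma:ConvergenceTangentConeDecompositionDecreaseRank} are tailored precisely for this purpose, while the rank-submatrix bound $\rank D_{22} \le \rank \left[\begin{smallmatrix} D_{11} & D_{12} \\ D_{21} & D_{22} \end{smallmatrix}\right]$ matches the inclusion $\oshort{r}-r \le \oshort{r}-r'$ in a clean way.
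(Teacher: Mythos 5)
Your proposal is correct, and its skeleton is the same as the paper's (transport the block decomposition of $\eta$ at $X$ to bases adapted to $X_i$, then bound the rank of the corner block by that of the matrix containing it), but the two technical obstacles are handled differently. The paper fixes a single SVD of $X$, writes $\eta$ in $2\times 2$ block form, and applies Lemma~\ref{lemma:ConvergenceTangentConeDecompositionConstantRank} not to $X_i$ but to its metric projection $\ushort{X}_i$ onto $\R_{\ushort{r}}^{m \times n}$; this yields bases converging along the \emph{whole} index set, and membership $\eta_i \in \tancone{\R_{\le \oshort{r}}^{m \times n}}{X_i}$ is then checked by rotating $U_{i\perp}$, $V_{i\perp}$ with orthogonal $P_i$, $Q_i$ so that the relevant $(m-r_i)\times(n-r_i)$ corner becomes a submatrix of $P_i^\tp D Q_i$. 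You instead reduce to constant-rank subsequences via the sub-subsequence criterion for $d\bigl(\eta,\tancone{\R_{\le \oshort{r}}^{m \times n}}{X_i}\bigr)\to 0$ --- legitimate, since the inner limit is exactly the set of points whose distance to $\tancone{\R_{\le \oshort{r}}^{m \times n}}{X_i}$ tends to $0$ --- and then invoke the heavier Lemma~\ref{lemma:ConvergenceTangentConeDecompositionDecreaseRank} (which the paper reserves for the outer-limit bound) to get a $3\times 3$ decomposition from which membership is read off directly. Your route trades the rotation trick and the best rank-$\ushort{r}$ approximation for subsequence bookkeeping and a further extraction forced by the fact that Lemma~\ref{lemma:ConvergenceTangentConeDecompositionDecreaseRank} only provides convergence of the barred and perpendicular factors along a subsequence; the paper's route produces an explicit selection $\eta_i\to\eta$ over all indices, which is slightly more informative but relies on the same two inequalities you isolate, namely $\rank D_{22} \le \rank\bigl[\begin{smallmatrix} D_{11} & D_{12} \\ D_{21} & D_{22}\end{smallmatrix}\bigr] \le \oshort{r}-r \le \oshort{r}-\rank X_i$. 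Both arguments are sound.
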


\begin{proof}
%\com{Observe that $\oshort{r}-r+\ushort{r} \le \oshort{r} < \min\{m,n\}$.}
Let $(X_i)_{i \in \N}$ be a sequence in $\R_{\le r}^{m \times n}$ converging to $X \in \R_{\ushort{r}}^{m \times n}$. For a given $\eta \in \tancone{\R_{\le \oshort{r}-r+\ushort{r}}^{m \times n}}{X}$, let us construct a sequence $(\eta_i)_{i\in\N}$ converging to $\eta$ and such that $\eta_i \in \tancone{\R_{\le \oshort{r}}^{m \times n}}{X_i}$ for every $i \in \N$, and the proof will be complete.
By \eqref{eq:DistanceToRealDeterminantalVariety}, there exists $i_* \in \N$ such that, for every integer $i > i_*$, $\rank X_i \ge \ushort{r}$. For every $i \in \{0, \dots, i_*\}$, let us choose $\eta_i \in \tancone{\R_{\le \oshort{r}}^{m \times n}}{X_i}$. Let us now complete the definition of $(\eta_i)_{i\in\N}$.
Let $X = [U \; U_\perp] \diag(\Sigma,0_{m-\ushort{r} \times n-\ushort{r}}) [V \; V_\perp]^\tp$ be an SVD. By \eqref{eq:TangentConeRealDeterminantalVarietyMatrix}, there exist $A \in \R^{\ushort{r} \times \ushort{r}}$, $B \in \R^{\ushort{r} \times n-\ushort{r}}$, $C \in \R^{m-\ushort{r} \times \ushort{r}}$ and $D \in \R_{\le \oshort{r}-r}^{m-\ushort{r} \times n-\ushort{r}}$ such that $\eta = [U \; U_\perp] \left[\begin{smallmatrix} A & B \\ C & D \end{smallmatrix}\right] [V \; V_\perp]^\tp$.
For every integer $i > i_*$, let $\ushort{X}_i \in \argmin_{\ushort{X} \in \R_{\ushort{r}}^{m \times n}} \norm{X_i-\ushort{X}}$ as in the proof of Lemma~\ref{lemma:ConvergenceTangentConeDecompositionDecreaseRank}. Let us apply Lemma~\ref{lemma:ConvergenceTangentConeDecompositionConstantRank} to $(\ushort{X}_i)_{i \in \N,\, i > i_*}$, the $r$ in Lemma~\ref{lemma:ConvergenceTangentConeDecompositionConstantRank} being $\ushort{r}$ here, and define, for every integer $i > i_*$, $\eta_i := [U_i \; U_{i\perp}] \left[\begin{smallmatrix} A & B \\ C & D \end{smallmatrix}\right] [V_i \; V_{i\perp}]^\tp$. Then, $(\eta_i)_{i \in \N}$ converges to $\eta$ and it remains to prove that $\eta_i \in \tancone{\R_{\le \oshort{r}}^{m \times n}}{X_i}$ for every integer $i > i_*$. Let $i \in \N$, $i > i_*$ and $r_i := \rank X_i$. The case where $r_i = \ushort{r}$ is trivial since it implies $\ushort{X}_i = X_i$. Let us therefore consider the case where $r_i > \ushort{r}$. Let $P_i \in \mathcal{O}_{m-\ushort{r}}$ be such that the first $r_i-\ushort{r}$ columns of $U_{i\perp}' := U_{i\perp} P_i$ together with $U_i$ span $\im X_i$. Likewise, let $Q_i \in \mathcal{O}_{n-\ushort{r}}$ be such that the first $r_i-\ushort{r}$ columns of $V_{i\perp}' := V_{i\perp} Q_i$ together with $V_i$ span $\im X_i^\tp$. Then, $\eta_i = [U_i \; U_{i\perp}'] \left[\begin{smallmatrix} A & B Q_i \\ P_i^\tp C & P_i^\tp D Q_i \end{smallmatrix}\right] [V_i \; V_{i\perp}']^\tp$ and, since the rank of the submatrix of $P_i^\tp D Q_i$ containing its last $m-r_i$ rows and $n-r_i$ columns is upper bounded by $\rank P_i^\tp D Q_i = \rank D \le \oshort{r}-r \le \oshort{r}-r_i$, $\eta_i \in \tancone{\R_{\le \oshort{r}}^{m \times n}}{X_i}$ according to \eqref{eq:TangentConeRealDeterminantalVarietyMatrix}.
\end{proof}

\begin{proposition}
\label{prop:MinimalInnerLimitTangentConeRealDeterminantalVariety}
For every $X \in \R_{\ushort{r}}$, if $\ushort{r} < r$ and $(X_i)_{i \in \N}$ is the sequence in $\R_r^{m \times n}$ obtained by applying Lemma~\ref{lemma:DenseTangentConeDecompositionDecreaseRank} to $X$, then
\begin{equation*}
\inlim_{i \to \infty} \tancone{\R_{\le \oshort{r}}^{m \times n}}{X_i} = \tancone{\R_{\le \oshort{r}-r+\ushort{r}}^{m \times n}}{X}.
\end{equation*}
\end{proposition}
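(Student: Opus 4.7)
The inclusion $\supseteq$ is immediate from Proposition~\ref{prop:LowerBoundInnerLimitTangentConeRealDeterminantalVariety}, so the task reduces to proving $\subseteq$. I fix $\eta \in \inlim_{i \to \infty} \tancone{\R_{\le \oshort{r}}^{m \times n}}{X_i}$ and a sequence $(\eta_i)_{i\in\N}$ converging to $\eta$ with $\eta_i \in \tancone{\R_{\le \oshort{r}}^{m \times n}}{X_i}$ for every $i$. Since $X_i$ has rank $r$, applying \eqref{eq:TangentConeRealDeterminantalVarietyMatrix} in the bases $[U \; \bar{U}_i \; U_{i\perp}]$ of $\R^m$ and $[V \; \bar{V}_i \; V_{i\perp}]$ of $\R^n$ provided by Lemma~\ref{lemma:DenseTangentConeDecompositionDecreaseRank} yields that the bottom-right block $U_{i\perp}^\tp \eta_i V_{i\perp}$ has rank at most $\oshort{r}-r$ for every $i$.

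The key step is to exploit Property~3 of Lemma~\ref{lemma:DenseTangentConeDecompositionDecreaseRank}. I fix an arbitrary pair $(\tilde{U}, \tilde{V}) \in \mathcal{U}_\perp \times \mathcal{V}_\perp$ and arbitrary column splittings $\tilde{U} = [\bar{U} \; U_\perp]$ and $\tilde{V} = [\bar{V} \; V_\perp]$ with $\bar{U}, \bar{V}$ having $r-\ushort{r}$ columns and $U_\perp$ (resp.\ $V_\perp$) having $m-r$ (resp.\ $n-r$) columns. By Property~3, there exists a subsequence $(i_k)$ along which $\bar{U}_{i_k} \to \bar{U}$, $U_{i_k\perp} \to U_\perp$, $\bar{V}_{i_k} \to \bar{V}$, and $V_{i_k\perp} \to V_\perp$. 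Combined with $\eta_{i_k} \to \eta$ and the closedness of $\R_{\le \oshort{r}-r}^{m-r \times n-r}$ (Proposition~\ref{prop:PointSetTopologyRealFixedRankManifold}), passing to the limit gives $\rank(U_\perp^\tp \eta V_\perp) \le \oshort{r}-r$. Setting $M := \tilde{U}^\tp \eta \tilde{V}$ and noting that the admissible $U_\perp$ and $V_\perp$ are precisely those of the form $\tilde{U} P$ and $\tilde{V} Q$ with $P \in \st(m-r, m-\ushort{r})$ and $Q \in \st(n-r, n-\ushort{r})$, this reads: $\rank(P^\tp M Q) \le \oshort{r}-r$ for every such $(P, Q)$.

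The main obstacle is the final linear algebra step of deducing $\rank M \le \oshort{r}-r$ from this uniform bound on all $(m-r)\times(n-r)$ orthonormal compressions. The argument I have in mind: if $\rank M = s > \oshort{r}-r$ and $M = \sum_{j=1}^s \sigma_j a_j b_j^\tp$ is a thin SVD, pick $P$ whose columns extend $\{a_1, \dots, a_{\min(s, m-r)}\}$ to an orthonormal frame of $\R^{m-\ushort{r}}$, and pick $Q$ analogously from the $b_j$'s. A direct computation (every term $\sigma_j (P^\tp a_j)(Q^\tp b_j)^\tp$ vanishes for $j > \min(s, m-r, n-r)$ and equals $\sigma_j e_j e_j^\tp$ otherwise) gives $\rank(P^\tp M Q) = \min(s, m-r, n-r)$. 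The hypothesis $\oshort{r} < \min\{m,n\}$ of Theorem~\ref{theorem:MainResult} ensures $\oshort{r}-r < \min(m-r, n-r)$, so $\min(s, m-r, n-r) > \oshort{r}-r$, contradicting the uniform bound. Hence $\rank M \le \oshort{r}-r$ for every $(\tilde{U}, \tilde{V}) \in \mathcal{U}_\perp \times \mathcal{V}_\perp$, which by \eqref{eq:TangentConeRealDeterminantalVarietyMatrix} applied at $X$ (with $\oshort{r}$ replaced by $\oshort{r}-r+\ushort{r}$ and $r$ by $\ushort{r}$) is exactly the condition $\eta \in \tancone{\R_{\le \oshort{r}-r+\ushort{r}}^{m \times n}}{X}$, completing the proof.
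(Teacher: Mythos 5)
Your proof is correct and takes essentially the same route as the paper's: both use Property~3 of Lemma~\ref{lemma:DenseTangentConeDecompositionDecreaseRank} to conclude that every $(m-r)\times(n-r)$ orthonormal compression of the bottom-right $(m-\ushort{r})\times(n-\ushort{r})$ block of $\eta$ arises as a cluster point of the rank-bounded blocks $U_{i\perp}^\tp\eta_i V_{i\perp}$, and then deduce the rank bound on the whole block. The only difference is that you spell out, via the SVD of $M$, the final linear-algebra implication that the paper leaves implicit.
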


\begin{proof}
It suffices to prove the inclusion $\subseteq$ thanks to Proposition~\ref{prop:LowerBoundInnerLimitTangentConeRealDeterminantalVariety}.
Let $(\eta_i)_{i \in \N}$ be a sequence converging to $\eta$ such that $\eta_i \in \tancone{\R_{\le \oshort{r}}^{m \times n}}{X_i}$ for every $i \in \N$. 
Choose $[\bar{U}_\perp \; U_\perp] \in \mathcal{U}_\perp$ and $[\bar{V}_\perp \; V_\perp] \in \mathcal{V}_\perp$ with the same block sizes as $[\bar{U}_i \; U_{i\perp}]$ and $[\bar{V}_i \; V_{i\perp}]$, respectively, and write
\begin{equation*}
\eta = [U \; \bar{U}_\perp \; U_\perp] \begin{bmatrix}
A & B & C \\ D & E & F \\ G & H & K
\end{bmatrix} [V \; \bar{V}_\perp \; V_\perp]^\tp.
\end{equation*}
By \eqref{eq:TangentConeRealDeterminantalVarietyMatrix}, it suffices to show that $\rank \left[ \begin{smallmatrix} E & F \\ H & K \end{smallmatrix} \right] \le \oshort{r}-r$.
Also by \eqref{eq:TangentConeRealDeterminantalVarietyMatrix}, for every $i \in \N$,
\begin{align*}
\eta_i = [U \; \bar{U}_i \; U_{i\perp}] \begin{bmatrix}
A_i & B_i & C_i \\ D_i & E_i & F_i \\ G_i & H_i & K_i
\end{bmatrix} [V \; \bar{V}_i \; V_{i\perp}]^\tp,&&
K_i = U_{i\perp}^\tp \eta_i V_{i\perp} \in \R_{\le \oshort{r}-r}^{m-r \times n-r}.
\end{align*}
%\begin{align*}
%A_i &= U^\tp \eta_i V \in \R^{\ushort{r} \times \ushort{r}},&&
%B_i = U^\tp \eta_i \bar{V}_i \in \R^{\ushort{r} \times r-\ushort{r}},&&
%C_i = U^\tp \eta_i V_{i\perp} \in \R^{\ushort{r} \times n-r},\\
%D_i &= \bar{U}_i^\tp \eta_i V \in \R^{r-\ushort{r} \times \ushort{r}},&&
%E_i = \bar{U}_i^\tp \eta_i \bar{V}_i \in \R^{r-\ushort{r} \times r-\ushort{r}},&&
%F_i = \bar{U}_i^\tp \eta_i V_{i\perp} \in \R^{r-\ushort{r} \times n-r},\\
%G_i &= U_{i\perp}^\tp \eta_i V \in \R^{m-r \times \ushort{r}},&&
%H_i = U_{i\perp}^\tp \eta_i \bar{V}_i \in \R^{m-r \times r-\ushort{r}},&&
%K_i = U_{i\perp}^\tp \eta_i V_{i\perp} \in \R_{\le \oshort{r}-r}^{m-r \times n-r}.
%\end{align*}
In view of Lemma~\ref{lemma:DenseTangentConeDecompositionDecreaseRank}, for every $P \in \mathcal{O}_{m-\ushort{r}}$ and $Q \in \mathcal{O}_{n-\ushort{r}}$, $\left([\bar{U}_\perp \; U_\perp] \left[\begin{smallmatrix} P_{1,1} & P_{1,2} \\ P_{2,1} & P_{2,2} \end{smallmatrix}\right],\, [\bar{V}_\perp \; V_\perp] \left[\begin{smallmatrix} Q_{1,1} & Q_{1,2} \\ Q_{2,1} & Q_{2,2} \end{smallmatrix}\right]\right)$ is a cluster point of $(([\bar{U}_i \; U_{i\perp}],\, [\bar{V}_i \; V_{i\perp}]))_{i \in \N}$, hence
\begin{equation*}
\left(\begin{bmatrix} \bar{U}_\perp & U_\perp \end{bmatrix} \begin{bmatrix} P_{1,2} \\ P_{2,2} \end{bmatrix}\right)^\tp \eta \left(\begin{bmatrix} \bar{V}_\perp & V_\perp \end{bmatrix} \begin{bmatrix} Q_{1,2} \\ Q_{2,2} \end{bmatrix}\right)
= \begin{bmatrix} P_{1,2} \\ P_{2,2} \end{bmatrix}^\tp \begin{bmatrix} E & F \\ H & K \end{bmatrix} \begin{bmatrix} Q_{1,2} \\ Q_{2,2} \end{bmatrix}
\end{equation*}
is a cluster point of $(K_i)_{i \in \N}$ and has therefore a rank not larger than $\oshort{r}-r$. Thus, each $m-r \times n-r$ submatrix of $\left[ \begin{smallmatrix} E & F \\ H & K \end{smallmatrix} \right]$ has a rank not larger than $\oshort{r}-r$, which implies that $\rank \left[ \begin{smallmatrix} E & F \\ H & K \end{smallmatrix} \right] \le \oshort{r}-r$.
\end{proof}

\begin{proposition}
\label{prop:RelativeContinuityTangentConeRealDeterminantalVariety}
For every sequence $(X_i)_{i \in \N}$ in $\R_{\le r}^{m \times n}$ converging to $X \in \R_r^{m \times n}$,
\begin{equation*}
\setlim_{i \to \infty} \tancone{\R_{\le \oshort{r}}^{m \times n}}{X_i} = \tancone{\R_{\le \oshort{r}}^{m \times n}}{X}.
\end{equation*}
\end{proposition}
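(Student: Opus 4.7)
\medskip

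The plan is to observe that the setting $\ushort{r}=r$ is exactly the regime where Lemma~\ref{lemma:ConvergenceTangentConeDecompositionConstantRank} applies and where Proposition~\ref{prop:LowerBoundInnerLimitTangentConeRealDeterminantalVariety} already delivers the inner-limit inclusion for free. First I would note that since $X\in\R_r^{m\times n}$ and $\R_r^{m\times n}$ is relatively open in $\R_{\le r}^{m\times n}$ (Proposition~\ref{prop:PointSetTopologyRealFixedRankManifold}), the sequence $(X_i)_{i\in\N}$ eventually lies in $\R_r^{m\times n}$; discarding finitely many terms, I may assume $X_i\in\R_r^{m\times n}$ for every $i\in\N$. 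The inclusion $\tancone{\R_{\le\oshort{r}}^{m\times n}}{X}\subseteq\inlim_{i\to\infty}\tancone{\R_{\le\oshort{r}}^{m\times n}}{X_i}$ then follows immediately from Proposition~\ref{prop:LowerBoundInnerLimitTangentConeRealDeterminantalVariety} specialized to $\ushort{r}=r$ (so that $\oshort{r}-r+\ushort{r}=\oshort{r}$).

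The substantive task is the reverse inclusion for the outer limit. Given $\eta\in\outlim_{i\to\infty}\tancone{\R_{\le\oshort{r}}^{m\times n}}{X_i}$, I would extract a subsequence (which I relabel as the whole sequence) $\eta_i\to\eta$ with $\eta_i\in\tancone{\R_{\le\oshort{r}}^{m\times n}}{X_i}$. I would then apply Lemma~\ref{lemma:ConvergenceTangentConeDecompositionConstantRank} (legitimate because $\ushort{r}=r$) to obtain full-rank matrices $U_i\to U$, $U_{i\perp}\to U_\perp$, $V_i\to V$, $V_{i\perp}\to V_\perp$ whose columns represent $\im X_i,(\im X_i)^\perp,\im X_i^\tp,(\im X_i^\tp)^\perp$ and analogously for $X$. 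By Theorem~\ref{theorem:TangentAndNormalConesRealDeterminantalVariety}, each $\eta_i$ admits the block representation
\begin{equation*}
\eta_i=[U_i\;U_{i\perp}]\begin{bmatrix}A_i&B_i\\ C_i&D_i\end{bmatrix}[V_i\;V_{i\perp}]^\tp,\qquad D_i=U_{i\perp}^\mpinv\eta_iV_{i\perp}^{\mpinv\tp}\in\R_{\le\oshort{r}-r}^{m-r\times n-r},
\end{equation*}
and the four blocks are recovered via the Moore--Penrose pseudoinverses of $U_i,U_{i\perp},V_i,V_{i\perp}$. Since the pseudoinverse is continuous on each fixed-rank manifold (Section~\ref{subsec:EuclideanVectorSpaceRealMatrices}), the four blocks converge to $A=U^\mpinv\eta V^{\mpinv\tp}$, $B=U^\mpinv\eta V_\perp^{\mpinv\tp}$, $C=U_\perp^\mpinv\eta V^{\mpinv\tp}$, $D=U_\perp^\mpinv\eta V_\perp^{\mpinv\tp}$, which yields the analogous decomposition of $\eta$ with respect to $[U\;U_\perp]$ and $[V\;V_\perp]$.

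The final step is to observe that $\R_{\le\oshort{r}-r}^{m-r\times n-r}$ is closed (Proposition~\ref{prop:PointSetTopologyRealFixedRankManifold}), so the constraint $\rank D_i\le\oshort{r}-r$ passes to the limit, giving $\rank D\le\oshort{r}-r$. Invoking Theorem~\ref{theorem:TangentAndNormalConesRealDeterminantalVariety} once more, this proves $\eta\in\tancone{\R_{\le\oshort{r}}^{m\times n}}{X}$, whence $\outlim_{i\to\infty}\tancone{\R_{\le\oshort{r}}^{m\times n}}{X_i}\subseteq\tancone{\R_{\le\oshort{r}}^{m\times n}}{X}$. Chaining with the generic inclusion $\inlim\subseteq\outlim$ and the inner-limit inclusion from the first paragraph collapses the four sets to equality, delivering the Painlev\'e--Kuratowski limit.

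I do not anticipate a serious obstacle: the only delicate point is the passage to the limit in the block decomposition, which is handled cleanly by Lemma~\ref{lemma:ConvergenceTangentConeDecompositionConstantRank} (selection of converging orthogonal complements) together with continuity of the pseudoinverse on the constant-rank manifolds. The rank-$r$ hypothesis on $X$ is essential precisely because it guarantees this clean constant-rank behavior; without it, the $U_{i\perp}$ need not converge, which is what forces the more elaborate Lemma~\ref{lemma:ConvergenceTangentConeDecompositionDecreaseRank} in the general case $\ushort{r}<r$.
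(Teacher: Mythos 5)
Your proposal is correct and follows essentially the same route as the paper: reduce to a sequence in $\R_r^{m\times n}$, get the inner-limit inclusion from Proposition~\ref{prop:LowerBoundInnerLimitTangentConeRealDeterminantalVariety} with $\ushort{r}=r$, and for the outer limit use Lemma~\ref{lemma:ConvergenceTangentConeDecompositionConstantRank} together with the block decomposition \eqref{eq:TangentConeRealDeterminantalVarietyMatrix} and the closedness of $\R_{\le\oshort{r}-r}^{m-r\times n-r}$ to pass the rank constraint to the limit. The only cosmetic difference is that the paper writes the limit blocks as $U^\tp\eta V$ etc.\ (since the SVD factors are orthonormal) rather than via pseudoinverses.
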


\begin{proof}
By \eqref{eq:DistanceToRealDeterminantalVariety}, every sequence in $\R_{\le r}^{m \times n}$ converging to a point in $\R_r^{m \times n}$ contains at most a finite number of elements in $\R_{< r}^{m \times n}$. Thus, it suffices to consider a sequence $(X_i)_{i \in \N}$ in $\R_r^{m \times n}$ converging to $X \in \R_r^{m \times n}$. In view of Proposition~\ref{prop:LowerBoundInnerLimitTangentConeRealDeterminantalVariety}, we only need to prove that
\begin{equation*}
\outlim_{i \to \infty} \tancone{\R_{\le \oshort{r}}^{m \times n}}{X_i} \subseteq \tancone{\R_{\le \oshort{r}}^{m \times n}}{X}.
\end{equation*}
%Write the SVD and apply Lemma~\ref{lemma:ConvergenceTangentConeDecompositionConstantRank} here.
%We first prove that $\tancone{\R_{\le \oshort{r}}^{m \times n}}{X} \subseteq \inlim_{i \to \infty} \tancone{\R_{\le \oshort{r}}^{m \times n}}{X_i}$.
%For a given $\eta \in \tancone{\R_{\le \oshort{r}}^{m \times n}}{X}$, let us construct a sequence $(\eta_i)_{i\in\N}$ converging to $\eta$ and such that $\eta_i \in \tancone{\R_{\le \oshort{r}}^{m \times n}}{X_i}$ for every $i \in \N$.
%By Theorem~\ref{theorem:TangentAndNormalConesRealDeterminantalVariety}, there are $A \in \R^{r \times r}$, $B \in \R^{r \times n-r}$, $C \in \R^{m-r \times r}$ and $D \in \R_{\le \oshort{r}-r}^{m-r \times n-r}$ such that
%$
%\eta = [U \; U_\perp] \left[\begin{smallmatrix}
%A & B \\ C & D
%\end{smallmatrix}\right] [V \; V_\perp]^\tp.
%$
%For every $i \in \N$,
%$
%\eta_i := [U_i \; U_{i\perp}] \left[\begin{smallmatrix}
%A & B \\ C & D
%\end{smallmatrix}\right] [V_i \; V_{i\perp}]^\tp \in \tancone{\R_{\le \oshort{r}}^{m \times n}}{X_i}.
%$
%Clearly, $(\eta_i)_{i \in \N}$ converges to $\eta$.
Let $\eta_i \in \tancone{\R_{\le \oshort{r}}^{m \times n}}{X_i}$ for every $i \in \N$ and $(\eta_i)_{i \in \N}$ have $\eta \in \R^{m \times n}$ as cluster point. We need to prove that $\eta \in \tancone{\R_{\le \oshort{r}}^{m \times n}}{X}$.
Let $X = [U \; U_\perp ] \diag(\Sigma, 0_{m-r \times n-r}) [V \; V_\perp]^\tp$ be an SVD and let us use the notation of Lemma~\ref{lemma:ConvergenceTangentConeDecompositionConstantRank}. Then, by \eqref{eq:TangentConeRealDeterminantalVarietyMatrix}, for every $i \in \N$,
$
\eta_i = [U_i \; U_{i\perp}] \left[\begin{smallmatrix}
A_i & B_i \\ C_i & D_i
\end{smallmatrix}\right] [V_i \; V_{i\perp}]^\tp
$
with $A_i = U_i^\mpinv \eta_i V_i^{\mpinv\tp} \in \R^{r \times r}$, $B_i = U_i^\mpinv \eta_i V_{i\perp}^{\mpinv\tp} \in \R^{r \times n-r}$, $C_i = U_{i\perp}^\mpinv \eta_i V_i^{\mpinv\tp} \in \R^{m-r \times r}$ and $D_i = U_{i\perp}^\mpinv \eta_i V_{i\perp}^{\mpinv\tp} \in \R_{\le \oshort{r}-r}^{m-r \times n-r}$.
Let $(\eta_{i_k})_{k \in \N}$ be a subsequence of $(\eta_i)_{i \in \N}$ converging to $\eta$.
Then, for every $k \in \N$,
$
\eta_{i_k} = [U_{i_k} \; U_{i_k\perp}] \left[\begin{smallmatrix}
A_{i_k} & B_{i_k} \\ C_{i_k} & D_{i_k}
\end{smallmatrix}\right] [V_{i_k} \; V_{i_k\perp}]^\tp
$
and, since the subsequences $(A_{i_k})_{k \in \N}$, $(B_{i_k})_{k \in \N}$, $(C_{i_k})_{k \in \N}$ and $(D_{i_k})_{k \in \N}$ respectively converge to $A := U^\tp \eta V$, $B := U^\tp \eta V_\perp$, $C := U_\perp^\tp \eta V$ and $D := U_\perp^\tp \eta V_\perp$ with $\rank D \le \oshort{r}-r$ as $\R_{\le \oshort{r}-r}^{m-r \times n-r}$ is closed,
$
\eta = [U \; U_\perp] \left[\begin{smallmatrix}
A & B \\ C & D
\end{smallmatrix}\right] [V \; V_\perp]^\tp,
$
which shows that $\eta \in \tancone{\R_{\le \oshort{r}}^{m \times n}}{X}$ according to \eqref{eq:TangentConeRealDeterminantalVarietyMatrix}.
\end{proof}

\begin{proposition}
\label{prop:UpperBoundOuterLimitTangentConeRealDeterminantalVariety}
For every sequence $(X_i)_{i \in \N}$ in $\R_{\le r}^{m \times n}$ converging to $X \in \R_{\ushort{r}}^{m \times n}$,
\begin{equation*}
\outlim_{i \to \infty} \tancone{\R_{\le \oshort{r}}^{m \times n}}{X_i} \subseteq \tancone{\R_{\le \oshort{r}+r-\ushort{r}}^{m \times n}}{X}.
\end{equation*}
\end{proposition}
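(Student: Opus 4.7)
The plan is to pick $\eta$ in the outer limit, reduce (by subsequence extraction) to sequences $(X_{i_k})_{k \in \N}$ of constant rank $r_* \in \{\ushort{r},\ldots,r\}$ along which $\eta_{i_k} \to \eta$, produce compatible convergent bases through Lemma~\ref{lemma:ConvergenceTangentConeDecompositionDecreaseRank} (or Lemma~\ref{lemma:ConvergenceTangentConeDecompositionConstantRank} when $r_* = \ushort{r}$), take limits in the matrix representation \eqref{eq:TangentConeRealDeterminantalVarietyMatrix}, and conclude by bounding the rank of the relevant sub-block with Proposition~\ref{prop:BoundRank2*2BlockMatrices}.

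Concretely, if $\eta$ belongs to the outer limit, then \eqref{eq:OuterLimitCorrespondence} yields a strictly increasing sequence $(i_k)_{k \in \N}$ and points $\eta_{i_k} \in \tancone{\R_{\le \oshort{r}}^{m \times n}}{X_{i_k}}$ with $\eta_{i_k} \to \eta$. Identity \eqref{eq:DistanceToRealDeterminantalVariety} forces $\rank X_i \ge \ushort{r}$ eventually, and since every rank lies in the finite set $\{\ushort{r},\ldots,r\}$, I may assume after a further extraction that $\rank X_{i_k} = r_*$ is constant. When $r_* > \ushort{r}$ I apply Lemma~\ref{lemma:ConvergenceTangentConeDecompositionDecreaseRank} with $r_*$ in place of $r$ and extract once more; when $r_* = \ushort{r}$ I use Lemma~\ref{lemma:ConvergenceTangentConeDecompositionConstantRank} (the ``barred'' blocks are then absent). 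In either case, I obtain bases $(U_{i_k},\bar{U}_{i_k},U_{i_k\perp})$ for the $U$-side of $X_{i_k}$ and analogues for the $V$-side, converging to orthonormal matrices $U \in \st(\ushort{r},m)$, $\bar{U}_\perp \in \st(r_*-\ushort{r},m)$, $U_\perp \in \st(m-r_*,m)$ (and their $V$-counterparts), so that $[U\;\bar{U}_\perp\;U_\perp] \in \mathcal{O}_m$, $\im U = \im X$, and the same on the $V$-side.

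Using \eqref{eq:TangentConeRealDeterminantalVarietyMatrix} at $X_{i_k}$ with the bases $[U_{i_k}\;\bar{U}_{i_k}]$ of $\im X_{i_k}$ and $U_{i_k\perp}$ of $(\im X_{i_k})^\perp$ (and their $V$-analogues), I write
\begin{equation*}
\eta_{i_k} = [U_{i_k}\;\bar{U}_{i_k}\;U_{i_k\perp}]\,M_{i_k}\,[V_{i_k}\;\bar{V}_{i_k}\;V_{i_k\perp}]^\tp,
\end{equation*}
where $M_{i_k}$ is a $3 \times 3$ block matrix whose lower-right $(m-r_*) \times (n-r_*)$ block $K_{i_k}$ has rank at most $\oshort{r}-r_*$. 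The explicit pseudoinverse formulas of Theorem~\ref{theorem:TangentAndNormalConesRealDeterminantalVariety} together with the continuity of the pseudoinverse on each fixed-rank manifold $\R_s^{m \times n}$ imply that every block of $M_{i_k}$ converges, so that $\eta = [U\;\bar{U}_\perp\;U_\perp]\,M\,[V\;\bar{V}_\perp\;V_\perp]^\tp$ with $M = \lim_k M_{i_k}$, and by closedness of $\R_{\le \oshort{r}-r_*}^{(m-r_*) \times (n-r_*)}$ the limiting block $K = \lim_k K_{i_k}$ satisfies $\rank K \le \oshort{r}-r_*$.

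Finally, I re-read the same limit as a $2 \times 2$ block decomposition with respect to the bases $U$ of $\im X$ and $[\bar{U}_\perp\;U_\perp]$ of $(\im X)^\perp$ (and analogously for $V$): its lower-right $(m-\ushort{r}) \times (n-\ushort{r})$ block is the $2 \times 2$ sub-matrix of $M$ formed by the last two block-rows and columns, whose own lower-right corner is $K$. Proposition~\ref{prop:BoundRank2*2BlockMatrices}, applied with $k = r_*-\ushort{r}$, $p = m-r_*$, $q = n-r_*$ and $s = \oshort{r}-r_*$, bounds its rank by $(r_*-\ushort{r}) + \min\{\oshort{r}-\ushort{r},\,m-r_*,\,n-r_*\} \le \oshort{r}+r-2\ushort{r}$, where the last inequality uses $r_* \le r$. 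Invoking \eqref{eq:TangentConeRealDeterminantalVarietyMatrix} at $X \in \R_{\ushort{r}}^{m \times n}$ places $\eta$ in $\tancone{\R_{\le \oshort{r}+r-\ushort{r}}^{m \times n}}{X}$ (with the trivial case $\oshort{r}+r-\ushort{r} \ge \min\{m,n\}$ obvious). The main obstacle is exactly the bookkeeping involved in re-reading the $3 \times 3$ decomposition at $X_{i_k}$ (adapted to rank $r_*$) as a $2 \times 2$ decomposition at $X$ (adapted to rank $\ushort{r}$); once this repartition is in place, Proposition~\ref{prop:BoundRank2*2BlockMatrices} delivers the required rank bound.
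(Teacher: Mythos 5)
Your proposal is correct and follows essentially the same route as the paper's proof: extract a constant-rank subsequence, apply Lemma~\ref{lemma:ConvergenceTangentConeDecompositionDecreaseRank} (or the constant-rank lemma when $r_*=\ushort{r}$), pass to the limit in the $3\times 3$ block representation from \eqref{eq:TangentConeRealDeterminantalVarietyMatrix}, and bound the rank of the trailing $2\times 2$ sub-block via Proposition~\ref{prop:BoundRank2*2BlockMatrices} with $k=r_*-\ushort{r}$, $p=m-r_*$, $q=n-r_*$, $s=\oshort{r}-r_*$. The only cosmetic difference is that you absorb the inequality $r_*\le r$ directly into the rank bound rather than first placing $\eta$ in $\tancone{\R_{\le \oshort{r}+r_*-\ushort{r}}^{m\times n}}{X}$ and then using monotonicity of the tangent cones, which is immaterial.
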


\begin{proof}
The case where $\ushort{r} = r$ has been considered in Proposition~\ref{prop:RelativeContinuityTangentConeRealDeterminantalVariety}. We focus here on the case where $\ushort{r} < r$. The result is trivial if $\oshort{r}+r-\ushort{r} \ge \min\{m,n\}$ since $\tancone{\R^{m \times n}}{X} = \R^{m \times n}$. We therefore assume that $\oshort{r}+r-\ushort{r} < \min\{m,n\}$.
Let $(X_i)_{i \in \N}$ be a sequence in $\R_{\le r}^{m \times n}$ converging to $X \in \R_{\ushort{r}}^{m \times n}$. Then, for every $i \in \N$ large enough, $\ushort{r} \le \rank X_i \le r$.
We have to prove the following inclusion: if $\eta_i \in \tancone{\R_{\le \oshort{r}}^{m \times n}}{X_i}$ for every $i \in \N$ and $(\eta_i)_{i \in \N}$ has $\eta \in \R^{m \times n}$ as cluster point, then $\eta \in \tancone{\R_{\le \oshort{r}+r-\ushort{r}}^{m \times n}}{X}$.
Let $(\eta_{j_k})_{k \in \N}$ be a subsequence of $(\eta_i)_{i \in \N}$ converging to $\eta$ such that $\rank X_{j_k} = \tilde{r} \in \{\ushort{r}, \dots, r\}$ for every $k \in \N$. We are going to prove that $\eta \in \tancone{\R_{\le \oshort{r}+\tilde{r}-\ushort{r} }^{m \times n}}{X}$; the result will then follow from the inclusion $\tancone{\R_{\le \oshort{r}+\tilde{r}-\ushort{r} }^{m \times n}}{X} \subseteq \tancone{\R_{\le \oshort{r}+r-\ushort{r} }^{m \times n}}{X}$. If $\tilde{r} = \ushort{r}$, the result follows from Proposition~\ref{prop:RelativeContinuityTangentConeRealDeterminantalVariety}. We therefore assume that $\tilde{r} > \ushort{r}$ and use the notation of Lemma~\ref{lemma:ConvergenceTangentConeDecompositionDecreaseRank} applied to $(X_{j_k})_{k \in \N}$, the $r$ in Lemma~\ref{lemma:ConvergenceTangentConeDecompositionDecreaseRank} being $\tilde{r}$ here. By \eqref{eq:TangentConeRealDeterminantalVarietyMatrix},
for every $k \in \N$, as $\eta_{i_k} \in \tancone{\R_{\le \oshort{r}}^{m \times n}}{X_{i_k}}$,
\begin{equation*}
\eta_{i_k} = [U_{i_k} \; \bar{U}_{i_k} \; U_{i_k\perp}] \begin{bmatrix}
A_{i_k} & B_{i_k} & C_{i_k} \\ D_{i_k} & E_{i_k} & F_{i_k} \\ G_{i_k} & H_{i_k} & K_{i_k}
\end{bmatrix} [V_{i_k} \; \bar{V}_{i_k} \; V_{i_k\perp}]^\tp
\end{equation*}
with
\begin{align*}
A_{i_k} &= U_{i_k}^\mpinv \eta_{i_k} V_{i_k}^{\mpinv\tp} \in \R^{\ushort{r} \times \ushort{r}},&&
B_{i_k} = U_{i_k}^\mpinv \eta_{i_k} \bar{V}_{i_k}^{\mpinv\tp} \in \R^{\ushort{r} \times \tilde{r}-\ushort{r}},&&
C_{i_k} = U_{i_k}^\mpinv \eta_{i_k} V_{i_k\perp}^{\mpinv\tp} \in \R^{\ushort{r} \times n-\tilde{r}},\\
D_{i_k} &= \bar{U}_{i_k}^\mpinv \eta_{i_k} V_{i_k}^{\mpinv\tp} \in \R^{\tilde{r}-\ushort{r} \times \ushort{r}},&&
E_{i_k} = \bar{U}_{i_k}^\mpinv \eta_{i_k} \bar{V}_{i_k}^{\mpinv\tp} \in \R^{\tilde{r}-\ushort{r} \times \tilde{r}-\ushort{r}},&&
F_{i_k} = \bar{U}_{i_k}^\mpinv \eta_{i_k} V_{i_k\perp}^{\mpinv\tp} \in \R^{\tilde{r}-\ushort{r} \times n-\tilde{r}},\\
G_{i_k} &= U_{i_k\perp}^\mpinv \eta_{i_k} V_{i_k}^{\mpinv\tp} \in \R^{m-\tilde{r} \times \ushort{r}},&&
H_{i_k} = U_{i_k\perp}^\mpinv \eta_{i_k} \bar{V}_{i_k}^{\mpinv\tp} \in \R^{m-\tilde{r} \times \tilde{r}-\ushort{r}},&&
K_{i_k} = U_{i_k\perp}^\mpinv \eta_{i_k} V_{i_k\perp}^{\mpinv\tp} \in \R_{\le \oshort{r}-\tilde{r}}^{m-\tilde{r} \times n-\tilde{r}},
\end{align*}
and respective limits
\begin{align*}
A &:= U^\tp \eta V \in \R^{\ushort{r} \times \ushort{r}},&&
B := U^\tp \eta \bar{V}_{\perp} \in \R^{\ushort{r} \times \tilde{r}-\ushort{r}},&&
C := U^\tp \eta V_\perp \in \R^{\ushort{r} \times n-\tilde{r}},\\
D &:= \bar{U}_{\perp}^\tp \eta V \in \R^{\tilde{r}-\ushort{r} \times \ushort{r}},&&
E := \bar{U}_{\perp}^\tp \eta \bar{V}_{\perp} \in \R^{\tilde{r}-\ushort{r} \times \tilde{r}-\ushort{r}},&&
F := \bar{U}_{\perp}^\tp \eta V_\perp \in \R^{\tilde{r}-\ushort{r} \times n-\tilde{r}},\\
G &:= U_\perp^\tp \eta V \in \R^{m-\tilde{r} \times \ushort{r}},&&
H := U_\perp^\tp \eta \bar{V}_\perp \in \R^{m-\tilde{r} \times \tilde{r}-\ushort{r}},&&
K := U_\perp^\tp \eta V_\perp \in \R_{\le \oshort{r}-\tilde{r}}^{m-\tilde{r} \times n-\tilde{r}}.
\end{align*}
Thus, taking the limit as $k \to \infty$ on both sides yields
\begin{equation*}
\eta = [U \; \bar{U}_\perp \; U_\perp] \begin{bmatrix}
A & B & C \\ D & E & F \\ G & H & K
\end{bmatrix} [V \; \bar{V}_\perp \; V_\perp]^\tp,
\end{equation*}
with, by Proposition~\ref{prop:BoundRank2*2BlockMatrices} applied with $k := \tilde{r}-\ushort{r}$, $p := m-\tilde{r}$, $q := n-\tilde{r}$ and $s := \oshort{r}-\tilde{r}$,
$
\rank \left[\begin{smallmatrix}
E & F \\ H & K
\end{smallmatrix}\right]
\le \tilde{r}-\ushort{r} + \min\{\oshort{r}-\ushort{r}, m-\tilde{r}, n-\tilde{r}\}
= \oshort{r}-\ushort{r} + \tilde{r}-\ushort{r},
$
which shows that $\eta \in \tancone{\R_{\le \oshort{r}+\tilde{r}-\ushort{r}}^{m \times n}}{X}$.
\end{proof}

\begin{proposition}
\label{prop:MaximalOuterLimitTangentConeRealDeterminantalVariety}
For every $X \in \R_{\ushort{r}}$, if $\ushort{r} < r$ and $(X_i)_{i \in \N}$ is the sequence in $\R_r^{m \times n}$ obtained by applying Lemma~\ref{lemma:DenseTangentConeDecompositionDecreaseRank} to $X$, then
\begin{equation*}
\outlim_{i \to \infty} \tancone{\R_{\le \oshort{r}}^{m \times n}}{X_i} = \tancone{\R_{\le \oshort{r}+r-\ushort{r}}^{m \times n}}{X}.
\end{equation*}
\end{proposition}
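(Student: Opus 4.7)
The plan is to establish the missing direction $\supseteq$; the inclusion $\subseteq$ is already handed to us by Proposition~\ref{prop:UpperBoundOuterLimitTangentConeRealDeterminantalVariety}. So I take an arbitrary $\eta \in \tancone{\R_{\le \oshort{r}+r-\ushort{r}}^{m \times n}}{X}$ and aim to construct a sequence $(\eta_i)_{i \in \N}$ with $\eta_i \in \tancone{\R_{\le \oshort{r}}^{m \times n}}{X_i}$ having $\eta$ as a cluster point, where $(X_i)_{i \in \N}$ is the sequence produced by Lemma~\ref{lemma:DenseTangentConeDecompositionDecreaseRank}.

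The first step is to pick any $([\bar{U}_\perp \; U_\perp],\, [\bar{V}_\perp \; V_\perp]) \in \mathcal{U}_\perp \times \mathcal{V}_\perp$ with block sizes matching those of $([\bar{U}_i \; U_{i\perp}],\, [\bar{V}_i \; V_{i\perp}])$ and expand
\[
\eta = [U \; \bar{U}_\perp \; U_\perp] \begin{bmatrix} A & B & C \\ D & E & F \\ G & H & K \end{bmatrix} [V \; \bar{V}_\perp \; V_\perp]^\tp.
\]
Since $\eta \in \tancone{\R_{\le \oshort{r}+r-\ushort{r}}^{m \times n}}{X}$ and $X \in \R_{\ushort{r}}^{m \times n}$, formula \eqref{eq:TangentConeRealDeterminantalVarietyMatrix} (grouping $[\bar{U}_\perp \; U_\perp]$ and $[\bar{V}_\perp \; V_\perp]$ into single orthogonal complements) forces $\rank \left[\begin{smallmatrix} E & F \\ H & K \end{smallmatrix}\right] \le \oshort{r}+r-2\ushort{r}$. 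This is exactly the hypothesis $2k+s$ of Proposition~\ref{prop:PermutationRank2*2BlockMatrices} with $k = r-\ushort{r}$, $p = m-r$, $q = n-r$ and $s = \oshort{r}-r$, so I can find $P \in \mathcal{O}_{m-\ushort{r}}$ and $Q \in \mathcal{O}_{n-\ushort{r}}$ such that, after replacing $[\bar{U}_\perp \; U_\perp]$ by $[\bar{U}_\perp \; U_\perp] P$ and $[\bar{V}_\perp \; V_\perp]$ by $[\bar{V}_\perp \; V_\perp] Q$ (which still lie in $\mathcal{U}_\perp \times \mathcal{V}_\perp$ since multiplying on the right by an orthogonal matrix preserves being a Stiefel matrix with the prescribed image), the new bottom-right $(m-r) \times (n-r)$ block $K'$ satisfies $\rank K' \le \oshort{r}-r$, the crucial bound coming from the conclusion $\min\{s, \max\{\min\{p,q\}-k,0\}\}$ of Proposition~\ref{prop:PermutationRank2*2BlockMatrices} (both regimes of this $\min$ yield a number $\le \oshort{r}-r$).

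Now I invoke the third clause of Lemma~\ref{lemma:DenseTangentConeDecompositionDecreaseRank}, which says that $\mathcal{U}_\perp \times \mathcal{V}_\perp$ is the set of cluster points of $([\bar{U}_i \; U_{i\perp}],\, [\bar{V}_i \; V_{i\perp}])_{i \in \N}$, to extract a subsequence $(i_k)_{k \in \N}$ along which these blocks converge to the rotated $([\bar{U}_\perp \; U_\perp] P,\, [\bar{V}_\perp \; V_\perp] Q)$. For each $k$, I set
\[
\eta_{i_k} := [U \; \bar{U}_{i_k} \; U_{i_k\perp}] \begin{bmatrix} A & B' & C' \\ D' & E' & F' \\ G' & H' & K' \end{bmatrix} [V \; \bar{V}_{i_k} \; V_{i_k\perp}]^\tp,
\]
with the primed blocks being those obtained after the rotation by $P$ and $Q$. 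By the second clause of Lemma~\ref{lemma:DenseTangentConeDecompositionDecreaseRank}, $[U \; \bar{U}_{i_k}]$ spans $\im X_{i_k}$ and $U_{i_k\perp}$ spans $(\im X_{i_k})^\perp$, and similarly for the right factor, so \eqref{eq:TangentConeRealDeterminantalVarietyMatrix} applied to $X_{i_k} \in \R_r^{m \times n}$ certifies that $\eta_{i_k} \in \tancone{\R_{\le \oshort{r}}^{m \times n}}{X_{i_k}}$ because the relevant bottom-right $(m-r) \times (n-r)$ block is precisely $K'$, of rank $\le \oshort{r}-r$. Continuity then gives $\eta_{i_k} \to \eta$; completing the sequence with, say, $\eta_i = 0$ for $i \notin \{i_k\}$ yields the desired witness.

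The main technical knot is step two: making sure Proposition~\ref{prop:PermutationRank2*2BlockMatrices} is applicable with the right parameters and delivers the bound $\oshort{r}-r$ on $K'$ uniformly, including the edge case $\oshort{r}+r-\ushort{r} \ge \min\{m,n\}$, where $\min\{m-r,n-r\}-(r-\ushort{r})$ may drop below $\oshort{r}-r$; fortunately, in that case it only shrinks the rank of $K'$ further, so the conclusion $\rank K' \le \oshort{r}-r$ still holds. Once the correct normal form is identified, the density of cluster points supplied by Lemma~\ref{lemma:DenseTangentConeDecompositionDecreaseRank} makes the rest of the argument essentially automatic, mirroring the structure already seen in the proofs of Propositions~\ref{prop:LowerBoundInnerLimitTangentConeRealDeterminantalVariety} and \ref{prop:MinimalInnerLimitTangentConeRealDeterminantalVariety}.
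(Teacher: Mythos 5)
Your proposal is correct and follows essentially the same route as the paper: expand $\eta$ in a block form relative to a fixed element of $\mathcal{U}_\perp \times \mathcal{V}_\perp$, invoke Proposition~\ref{prop:PermutationRank2*2BlockMatrices} with $k=r-\ushort{r}$, $p=m-r$, $q=n-r$, $s=\oshort{r}-r$ to rotate the frame so that the bottom-right block has rank at most $\oshort{r}-r$, and then use the density of cluster points from Lemma~\ref{lemma:DenseTangentConeDecompositionDecreaseRank} to exhibit $\eta$ as a cluster point of tangent vectors $\eta_i \in \tancone{\R_{\le \oshort{r}}^{m \times n}}{X_i}$. The only cosmetic difference is that the paper defines $\eta_i$ for every $i$ with the same coefficient matrix rather than padding a subsequence with zeros; your explicit handling of the edge case $\oshort{r}+r-\ushort{r} \ge \min\{m,n\}$ is a harmless extra.
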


\begin{proof}
%\com{In view of Proposition~\ref{prop:PermutationRank2*2BlockMatrices}, there is no need to consider the case where $\oshort{r}+r-\ushort{r} \ge \min\{m,n\}$ separately.}
It suffices to prove the inclusion $\supseteq$ thanks to Proposition~\ref{prop:UpperBoundOuterLimitTangentConeRealDeterminantalVariety}.
For a given $\eta \in \tancone{\R_{\le \oshort{r}+r-\ushort{r}}^{m \times n}}{X}$, let us construct a sequence $(\eta_i)_{i\in\N}$ having $\eta$ as cluster point and such that $\eta_i \in \tancone{\R_{\le \oshort{r}}^{m \times n}}{X_i}$ for every $i \in \N$, and the proof will be complete. Let $[\bar{U}_\perp \; U_\perp] \in \mathcal{U}_\perp$ and $[\bar{V}_\perp \; V_\perp] \in \mathcal{V}_\perp$. By \eqref{eq:TangentConeRealDeterminantalVarietyMatrix},
\begin{equation*}
\eta = [U \; \bar{U}_\perp \; U_\perp] \begin{bmatrix}
A & B & C \\ D & E & F \\ G & H & K
\end{bmatrix} [V \; \bar{V}_\perp \; V_\perp]^\tp
\end{equation*}
with $A \in \R^{\ushort{r} \times \ushort{r}}$, $B \in \R^{\ushort{r} \times r-\ushort{r}}$, $C \in \R^{\ushort{r} \times n-r}$, $D \in \R^{r-\ushort{r} \times \ushort{r}}$, $E \in \R^{r-\ushort{r} \times r-\ushort{r}}$, $F \in \R^{r-\ushort{r} \times n-r}$, $G \in \R^{m-r \times \ushort{r}}$, $H \in \R^{m-r \times r-\ushort{r}}$, $K \in \R^{m-r \times n-r}$ and $\rank \left[\begin{smallmatrix} E & F \\ H & K \end{smallmatrix}\right] \le \oshort{r}-\ushort{r}+r-\ushort{r} = 2(r-\ushort{r})+(\oshort{r}-r)$. By Proposition~\ref{prop:PermutationRank2*2BlockMatrices} applied with $k := r-\ushort{r}$, $p := m-r$, $q := n-r$ and $s := \oshort{r}-r$, there exist $\left[\begin{smallmatrix} U_{1,1} & U_{1,2} \\ U_{2,1} & U_{2,2} \end{smallmatrix}\right] \in \mathcal{O}_{m-\ushort{r}}$ and $\left[\begin{smallmatrix} V_{1,1} & V_{1,2} \\ V_{2,1} & V_{2,2} \end{smallmatrix}\right] \in \mathcal{O}_{n-\ushort{r}}$ such that $\left[\begin{smallmatrix} E & F \\ H & K \end{smallmatrix}\right] = \left[\begin{smallmatrix} U_{1,1} & U_{1,2} \\ U_{2,1} & U_{2,2} \end{smallmatrix}\right] \left[\begin{smallmatrix} E' & F' \\ H' & K' \end{smallmatrix}\right] \left[\begin{smallmatrix} V_{1,1} & V_{1,2} \\ V_{2,1} & V_{2,2} \end{smallmatrix}\right]^\tp$ with $E' \in \R^{r-\ushort{r} \times r-\ushort{r}}$, $F' \in \R^{r-\ushort{r} \times n-r}$, $H' \in \R^{m-r \times r-\ushort{r}}$ and $K' \in \R_{\le \oshort{r}-r}^{m-r \times n-r}$. If $\left[\begin{smallmatrix} \bar{U}_\perp' & U_\perp' \end{smallmatrix}\right] := \left[\begin{smallmatrix} \bar{U}_\perp & U_\perp \end{smallmatrix}\right] \left[\begin{smallmatrix} U_{1,1} & U_{1,2} \\ U_{2,1} & U_{2,2} \end{smallmatrix}\right]$, $\left[\begin{smallmatrix} \bar{V}_\perp' & V_\perp' \end{smallmatrix}\right] := \left[\begin{smallmatrix} \bar{V}_\perp & V_\perp \end{smallmatrix}\right] \left[\begin{smallmatrix} V_{1,1} & V_{1,2} \\ V_{2,1} & V_{2,2} \end{smallmatrix}\right]$, $\left[\begin{smallmatrix} B' & C' \end{smallmatrix}\right] := \left[\begin{smallmatrix} B & C \end{smallmatrix}\right] \left[\begin{smallmatrix} V_{1,1} & V_{1,2} \\ V_{2,1} & V_{2,2} \end{smallmatrix}\right]$ and $\left[\begin{smallmatrix} D' \\ G' \end{smallmatrix}\right] := \left[\begin{smallmatrix} U_{1,1} & U_{1,2} \\ U_{2,1} & U_{2,2} \end{smallmatrix}\right]^\tp \left[\begin{smallmatrix} D \\ G \end{smallmatrix}\right]$, then
\begin{equation*}
\eta = [U \; \bar{U}_\perp' \; U_\perp'] \begin{bmatrix}
A & B' & C' \\ D' & E' & F' \\ G' & H' & K'
\end{bmatrix} [V \; \bar{V}_\perp' \; V_\perp']^\tp.
\end{equation*}
Thus, for every $i \in \N$,
\begin{equation*}
\eta_i := [U \; \bar{U}_i \; U_{i\perp}] \begin{bmatrix}
A & B' & C' \\ D' & E' & F' \\ G' & H' & K'
\end{bmatrix} [V \; \bar{V}_i \; V_{i\perp}]^\tp \in \tancone{\R_{\le \oshort{r}}^{m \times n}}{X_i}.
\end{equation*}
By construction, $\eta$ is a cluster point of $(\eta_i)_{i \in \N}$ and therefore belongs to $\outlim_{i\to\infty} \tancone{\R_{\le \oshort{r}}^{m \times n}}{X_i}$.
\end{proof}

\section{Complementary results}
\label{sec:ComplementaryResults}
In this section, $\ushort{r}$, $r$ and $\oshort{r}$ are still positive integers such that $\ushort{r} \le r \le \oshort{r} < \min\{m,n\}$ and we compute inner and outer limits relative to $\R_r^{m \times n}$ and $\R_{\le r}^{m \times n}$ at $X \in \R_{\ushort{r}}^{m \times n}$, now of the correspondences $\regtancone{\R_{\le \oshort{r}}^{m \times n}}{\cdot}$, $\regnorcone{\R_{\le \oshort{r}}^{m \times n}}{\cdot}$, $\norcone{\R_{\le \oshort{r}}^{m \times n}}{\cdot}$ and $\connorcone{\R_{\le \oshort{r}}^{m \times n}}{\cdot}$ reviewed in Sections~\ref{subsec:TangentNormalCones} and \ref{subsec:TangentNormalConesRealDeterminantalVariety}. More precisely, Section~\ref{subsec:InnerOuterLimitsRegularTangentConeRealDeterminantalVariety} focuses on $\regtancone{\R_{\le \oshort{r}}^{m \times n}}{\cdot}$ while the three others concern the normal cones. Section~\ref{subsec:RelativeContinuityNormalConesRealDeterminantalVariety} considers the simple case where $\ushort{r} = r$ whereas the two others deal with the general case where $\ushort{r} \le r$, inner and outer limits being respectively studied in Sections~\ref{subsec:InnerLimitsNormalConesRealDeterminantalVariety} and \ref{subsec:OuterLimitsNormalConesRealDeterminantalVariety}.

\subsection{Inner and outer limits of $\regtancone{\R_{\le \oshort{r}}^{m \times n}}{\cdot}$ relative to $\R_r^{m \times n}$ and $\R_{\le r}^{m \times n}$}
\label{subsec:InnerOuterLimitsRegularTangentConeRealDeterminantalVariety}
As mentioned after the proof of \cite[Theorem~6.26]{RockafellarWets}, $\regtancone{\mathcal{S}}{\cdot}$ is not inner semicontinuous for an arbitrary nonempty subset $\mathcal{S}$ of $\R^{m \times n}$. The following result shows that it is however the case if $\mathcal{S} = \R_{\le \oshort{r}}^{m \times n}$.

\begin{corollary}
\label{coro:InnerOuterLimitsRegularTangentConeRealDeterminantalVariety}
For every sequence $(X_i)_{i \in \N}$ in $\R_{\le r}^{m \times n}$ converging to $X \in \R_{\ushort{r}}^{m \times n}$,
\begin{equation}
\label{eq:LowerUpperBoundsInnerOuterLimitsRegularTangentConeRealDeterminantalVariety}
\regtancone{\R_{\le \oshort{r}}^{m \times n}}{X}
= \tancone{\R_{\ushort{r}}^{m \times n}}{X}
\subseteq \inlim_{i \to \infty} \regtancone{\R_{\le \oshort{r}}^{m \times n}}{X_i}
\subseteq \outlim_{i \to \infty} \regtancone{\R_{\le \oshort{r}}^{m \times n}}{X_i}
\subseteq \tancone{\R_{\le 2r-\ushort{r}}^{m \times n}}{X}.
\end{equation}
Moreover, for every $X \in \R_{\ushort{r}}^{m \times n}$, if $\ushort{r} < r$ and $(X_i)_{i \in \N}$ is the sequence in $\R_r^{m \times n}$ obtained by applying Lemma~\ref{lemma:DenseTangentConeDecompositionDecreaseRank} to $X$, then
\begin{equation}
\label{eq:MinimalMaximalInnerOuterLimitsRegularTangentConeRealDeterminantalVariety}
\tancone{\R_{\ushort{r}}^{m \times n}}{X}
= \inlim_{i \to \infty} \regtancone{\R_{\le \oshort{r}}^{m \times n}}{X_i}
\subsetneq \outlim_{i \to \infty} \regtancone{\R_{\le \oshort{r}}^{m \times n}}{X_i}
= \tancone{\R_{\le 2r-\ushort{r}}^{m \times n}}{X}.
\end{equation}
Thus, for every $X \in \R_{\ushort{r}}^{m \times n}$,
\begin{align*}
\inlim_{\R_{\le r}^{m \times n} \ni Z \to X} \regtancone{\R_{\le \oshort{r}}^{m \times n}}{Z}
&= \inlim_{\R_r^{m \times n} \ni Z \to X} \regtancone{\R_{\le \oshort{r}}^{m \times n}}{Z}
= \tancone{\R_{\ushort{r}}^{m \times n}}{X}
= \regtancone{\R_{\le \oshort{r}}^{m \times n}}{X},\\
\outlim_{\R_{\le r}^{m \times n} \ni Z \to X} \regtancone{\R_{\le \oshort{r}}^{m \times n}}{Z}
&= \outlim_{\R_r^{m \times n} \ni Z \to X} \regtancone{\R_{\le \oshort{r}}^{m \times n}}{Z}
= \tancone{\R_{\le 2r-\ushort{r}}^{m \times n}}{X}.
\end{align*}
In particular, the correspondence $\regtancone{\R_{\le \oshort{r}}^{m \times n}}{\cdot}$ is:
\begin{itemize}
\item inner semicontinuous;
\item non-outer-semicontinuous relative to $\R_r^{m \times n}$ at every $X \in \R_{< r}^{m \times n}$;
\item continuous relative to $\R_{\le r}^{m \times n}$ at every $X \in \R_r^{m \times n}$.
\end{itemize}
\end{corollary}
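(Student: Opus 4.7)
My plan is to reduce the corollary to the main theorem (Theorem~\ref{theorem:MainResult}) by first rewriting the regular tangent cone as a plain tangent cone at both the limit point and each $X_i$. For any $Y \in \R_{\le \oshort{r}}^{m \times n}$ of rank $\rho$, formula~\eqref{eq:RegularTangentConeRealDeterminantalVariety} gives $\regtancone{\R_{\le \oshort{r}}^{m \times n}}{Y} = \tancone{\R_\rho^{m \times n}}{Y}$, and this coincides with $\tancone{\R_{\le \rho}^{m \times n}}{Y}$ via~\eqref{eq:TangentConeRealDeterminantalVariety} specialized to $\oshort{r} = \rho$. Applied at $Y = X$ this gives the first equality in~\eqref{eq:LowerUpperBoundsInnerOuterLimitsRegularTangentConeRealDeterminantalVariety}; applied at each $X_i$ (writing $r_i := \rank X_i \le r$) it yields $\regtancone{\R_{\le \oshort{r}}^{m \times n}}{X_i} = \tancone{\R_{\le r_i}^{m \times n}}{X_i}$, which is the lever for everything that follows.

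With this rewriting, the outer inclusion in~\eqref{eq:LowerUpperBoundsInnerOuterLimitsRegularTangentConeRealDeterminantalVariety} is immediate: set-monotonicity of the tangent cone gives $\tancone{\R_{\le r_i}^{m \times n}}{X_i} \subseteq \tancone{\R_{\le r}^{m \times n}}{X_i}$, and Theorem~\ref{theorem:MainResult}, applied with its $\oshort{r}$ set to $r$, bounds $\outlim_i \tancone{\R_{\le r}^{m \times n}}{X_i}$ by $\tancone{\R_{\le 2r-\ushort{r}}^{m \times n}}{X}$. The equalities in~\eqref{eq:MinimalMaximalInnerOuterLimitsRegularTangentConeRealDeterminantalVariety} are obtained by reusing the sequence in $\R_r^{m \times n}$ produced by Lemma~\ref{lemma:DenseTangentConeDecompositionDecreaseRank}: because $r_i = r$ for every $i$, the regular and plain tangent cones at each $X_i$ coincide, so Propositions~\ref{prop:MinimalInnerLimitTangentConeRealDeterminantalVariety} and~\ref{prop:MaximalOuterLimitTangentConeRealDeterminantalVariety} (with their $\oshort{r}$ set to $r$) hand over the two equalities. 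The strict middle inclusion then follows from~\eqref{eq:TangentConeRealDeterminantalVariety} since $2r - 2\ushort{r} > 0$ leaves room for a nonzero $D$ block in $\tancone{\R_{\le 2r-\ushort{r}}^{m \times n}}{X}$.

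The main technical point is the inner inclusion $\tancone{\R_{\ushort{r}}^{m \times n}}{X} \subseteq \inlim_i \regtancone{\R_{\le \oshort{r}}^{m \times n}}{X_i}$, and I would obtain it by specializing the construction in the proof of Proposition~\ref{prop:LowerBoundInnerLimitTangentConeRealDeterminantalVariety}. Given $\eta \in \tancone{\R_{\ushort{r}}^{m \times n}}{X}$, decomposition~\eqref{eq:TangentConeRealDeterminantalVarietyMatrix} forces the bottom-right block $D$ to be zero, and the sequence $(\eta_i)_{i \in \N}$ built in that proof, using Lemma~\ref{lemma:ConvergenceTangentConeDecompositionConstantRank} applied to the rank-$\ushort{r}$ best approximations of $X_i$, consequently satisfies $P_i^\tp D Q_i = 0$; its lower-right $(m - r_i)\times (n - r_i)$ sub-block is therefore zero, placing $\eta_i$ in $\tancone{\R_{r_i}^{m \times n}}{X_i} = \regtancone{\R_{\le \oshort{r}}^{m \times n}}{X_i}$. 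The real content thus reduces to the observation that replacing $D$ by $0$ in the construction of Proposition~\ref{prop:LowerBoundInnerLimitTangentConeRealDeterminantalVariety} upgrades membership from the contingent tangent cone to the smooth tangent cone.

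The three continuity claims then follow with no further calculation. Inner semicontinuity at every $X$ is~\eqref{eq:LowerUpperBoundsInnerOuterLimitsRegularTangentConeRealDeterminantalVariety} applied with $r = \oshort{r}$ and intersected over admissible sequences. The failure of outer semicontinuity relative to $\R_r^{m \times n}$ at $X \in \R_{<r}^{m \times n}$ is witnessed by the Lemma~\ref{lemma:DenseTangentConeDecompositionDecreaseRank} sequence through~\eqref{eq:MinimalMaximalInnerOuterLimitsRegularTangentConeRealDeterminantalVariety}. Continuity relative to $\R_{\le r}^{m \times n}$ at $X \in \R_r^{m \times n}$ reduces, via~\eqref{eq:DistanceToRealDeterminantalVariety}, to sequences lying eventually in $\R_r^{m \times n}$, on which $\regtancone{\R_{\le \oshort{r}}^{m \times n}}{\cdot}$ is the tangent space of the smooth manifold $\R_r^{m \times n}$ and is consequently continuous.
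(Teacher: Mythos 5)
Your proposal is correct and follows essentially the same route as the paper: the paper likewise obtains the first inclusion by rerunning the construction of Proposition~\ref{prop:LowerBoundInnerLimitTangentConeRealDeterminantalVariety} with the $D$-block forced to zero, and derives the remaining inclusions and equalities from \eqref{eq:RegularTangentConeRealDeterminantalVariety} together with Propositions~\ref{prop:MinimalInnerLimitTangentConeRealDeterminantalVariety}, \ref{prop:UpperBoundOuterLimitTangentConeRealDeterminantalVariety} and \ref{prop:MaximalOuterLimitTangentConeRealDeterminantalVariety} applied with $\oshort{r}$ replaced by $r$, exactly as you do via the identity $\regtancone{\R_{\le \oshort{r}}^{m \times n}}{X_i} = \tancone{\R_{\le \rank X_i}^{m \times n}}{X_i}$. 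Your write-up merely makes explicit the ``mutatis mutandis'' details that the paper leaves implicit.
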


\begin{proof}
The first inclusion of \eqref{eq:LowerUpperBoundsInnerOuterLimitsRegularTangentConeRealDeterminantalVariety} follows, mutatis mutandis, from the argument used in the proof of Proposition~\ref{prop:LowerBoundInnerLimitTangentConeRealDeterminantalVariety}. In view of \eqref{eq:RegularTangentConeRealDeterminantalVariety}, the first equality of \eqref{eq:MinimalMaximalInnerOuterLimitsRegularTangentConeRealDeterminantalVariety}, the last inclusion of \eqref{eq:LowerUpperBoundsInnerOuterLimitsRegularTangentConeRealDeterminantalVariety} and the last equality of \eqref{eq:MinimalMaximalInnerOuterLimitsRegularTangentConeRealDeterminantalVariety} respectively follow from Propositions~\ref{prop:MinimalInnerLimitTangentConeRealDeterminantalVariety}, \ref{prop:UpperBoundOuterLimitTangentConeRealDeterminantalVariety} and \ref{prop:MaximalOuterLimitTangentConeRealDeterminantalVariety}.
\end{proof}

\subsection{Continuity of the normal cones to $\R_{\le \oshort{r}}^{m \times n}$ relative to $\R_{\le r}^{m \times n}$ on $\R_r^{m \times n}$}
\label{subsec:RelativeContinuityNormalConesRealDeterminantalVariety}
\begin{proposition}
\label{prop:RelativeContinuityNormalConesRealDeterminantalVariety}
The correspondences $\regnorcone{\R_{\le \oshort{r}}^{m \times n}}{\cdot}$, $\norcone{\R_{\le \oshort{r}}^{m \times n}}{\cdot}$ and $\connorcone{\R_{\le \oshort{r}}^{m \times n}}{\cdot}$ are continuous relative to $\R_{\le r}^{m \times n}$ at every $X \in \R_r^{m \times n}$.
\end{proposition}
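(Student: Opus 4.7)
The plan is to uniformly express the three cones as images of a single continuous family of projections, after which (semi)continuity follows from two elementary observations.

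First, since $\R_r^{m \times n}$ is relatively open in $\R_{\le r}^{m \times n}$ by Proposition~\ref{prop:PointSetTopologyRealFixedRankManifold}(3), any sequence $(X_i)_{i\in\N}$ in $\R_{\le r}^{m \times n}$ converging to $X \in \R_r^{m \times n}$ satisfies $X_i \in \R_r^{m \times n}$ for $i$ large enough. Thus relative continuity with respect to $\R_{\le r}^{m \times n}$ at $X$ reduces to relative continuity with respect to $\R_r^{m \times n}$ at $X$, on which the maps $Z \mapsto ZZ^\mpinv$ and $Z \mapsto Z^\mpinv Z$ are continuous.

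Second, for each $Z \in \R_r^{m \times n}$ I would introduce the idempotent linear operator $\Pi_Z : \R^{m \times n} \to \R^{m \times n} : W \mapsto (I_m - ZZ^\mpinv) W (I_n - Z^\mpinv Z)$, which depends continuously on $Z$. Theorem~\ref{theorem:TangentAndNormalConesRealDeterminantalVariety} then lets me write each of the three cones as $\Pi_Z(\mathcal{W})$ for a closed subset $\mathcal{W} \subseteq \R^{m \times n}$ independent of $Z$: by \eqref{eq:ConvexifiedNormalConeRealDeterminantalVariety}, $\connorcone{\R_{\le \oshort{r}}^{m \times n}}{Z} = \Pi_Z(\R^{m \times n})$; by \eqref{eq:NormalConeRealDeterminantalVariety}, $\norcone{\R_{\le \oshort{r}}^{m \times n}}{Z} = \Pi_Z(\R_{\le \min\{m,n\}-\oshort{r}}^{m \times n})$; and $\regnorcone{\R_{\le \oshort{r}}^{m \times n}}{Z}$ equals $\Pi_Z(\R^{m \times n})$ if $r = \oshort{r}$ (since then $\R_{\le \oshort{r}}^{m \times n}$ locally coincides with the smooth manifold $\R_r^{m \times n}$, where regular and basic normal cones agree with the normal space) and equals $\{0\}$ if $r < \oshort{r}$ by \eqref{eq:RegularNormalConeRealDeterminantalVariety}.

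It then remains to verify that $Z \mapsto \Pi_Z(\mathcal{W})$ is continuous at $X$ on $\R_r^{m \times n}$ for the relevant closed choices of $\mathcal{W}$. For outer semicontinuity, if $(\eta_{i_k})_{k\in\N}$ is a subsequence converging to $\eta$ with $\eta_{i_k} \in \Pi_{X_{i_k}}(\mathcal{W})$, idempotence gives $\eta_{i_k} = \Pi_{X_{i_k}}(\eta_{i_k})$, so continuity of $\Pi_\cdot$ yields $\eta = \Pi_X(\eta)$; in each case $\eta_{i_k} \in \mathcal{W}$, hence $\eta \in \mathcal{W}$ by closedness, so $\eta \in \Pi_X(\mathcal{W})$. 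For inner semicontinuity, given $\eta \in \Pi_X(\mathcal{W}) \subseteq \mathcal{W}$, the sequence $\eta_i := \Pi_{X_i}(\eta)$ converges to $\eta$ and lies in $\Pi_{X_i}(\mathcal{W})$. The one point that requires care, and is the only substantive ingredient beyond continuity of the projections, is the rank-non-increase $\rank \Pi_Z(W) \le \rank W$, needed so that $\Pi_{X_i}(\eta) \in \R_{\le \min\{m,n\}-\oshort{r}}^{m \times n}$ whenever $\eta$ is; this is immediate because $\Pi_Z$ acts by left- and right-multiplication by orthogonal projection matrices, and it is precisely what makes the $\norcone$ case go through.
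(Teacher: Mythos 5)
Your proof is correct. The paper disposes of this proposition by reusing Lemma~\ref{lemma:ConvergenceTangentConeDecompositionConstantRank}: it picks bases $U_i,U_{i\perp},V_i,V_{i\perp}$ of $\im X_i$, $(\im X_i)^\perp$, etc.\ converging to those of $X$, writes a normal vector at $X_i$ as $U_{i\perp}D_iV_{i\perp}^\tp$ with $\rank D_i\le\min\{m,n\}-\oshort{r}$, and passes to the limit in the coefficient blocks exactly as in Proposition~\ref{prop:RelativeContinuityTangentConeRealDeterminantalVariety}. You reach the same conclusion from the same underlying continuity fact (continuity of $Z\mapsto ZZ^\mpinv$ and $Z\mapsto Z^\mpinv Z$ on $\R_r^{m\times n}$), but you package it differently: instead of choosing bases you work with the idempotent operator $\Pi_Z$, so that outer semicontinuity falls out of $\eta_{i_k}=\Pi_{X_{i_k}}(\eta_{i_k})$ plus closedness of $\mathcal{W}$, and inner semicontinuity from the one-line recipe $\eta_i:=\Pi_{X_i}(\eta)$. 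This is arguably cleaner — it avoids the subsequence extraction on block coefficients and treats all three cones uniformly through a single closed set $\mathcal{W}$ — at the price of having to justify the identification $\norcone{\R_r^{m\times n}}{Z}\cap\R_{\le\min\{m,n\}-\oshort{r}}^{m\times n}=\Pi_Z\big(\R_{\le\min\{m,n\}-\oshort{r}}^{m\times n}\big)$, which you correctly reduce to the two facts that $\Pi_Z$ fixes its image pointwise and does not increase rank. Your handling of $\regnorcone{\R_{\le\oshort{r}}^{m\times n}}{\cdot}$ by splitting into the constant case $r<\oshort{r}$ (where the cone is identically $\{0_{m\times n}\}$ by \eqref{eq:RegularNormalConeRealDeterminantalVariety}) and the case $r=\oshort{r}$ (where local coincidence with the smooth manifold $\R_{\oshort{r}}^{m\times n}$ applies) is also sound, as is the initial reduction from $\R_{\le r}^{m\times n}$ to $\R_r^{m\times n}$ via relative openness.
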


\begin{proof}
The proof is based on Lemma~\ref{lemma:ConvergenceTangentConeDecompositionConstantRank} in a similar way as the proof of Proposition~\ref{prop:RelativeContinuityTangentConeRealDeterminantalVariety}.
\end{proof}

\subsection{Inner limits of the normal cones to $\R_{\le \oshort{r}}^{m \times n}$ relative to $\R_r^{m \times n}$ at points of $\R_{< r}^{m \times n}$}
\label{subsec:InnerLimitsNormalConesRealDeterminantalVariety}
We begin with a basic result based on \cite[Exercise~4.14, Proposition~4.15 and Corollary~11.35(b)]{RockafellarWets} and describing how the inner and outer limits interact with the polar for closed convex cones.
\begin{proposition}
\label{prop:InnerOuterLimitsPolarClosedConvexCones}
For every sequence $(\mathcal{S}_i)_{i \in \N}$ of closed convex cones in $\R^{m \times n}$,
\begin{align*}
\inlim_{i \to \infty} \mathcal{S}_i^- = \big(\outlim_{i \to \infty} \mathcal{S}_i\big)^-,&&
\outlim_{i \to \infty} \mathcal{S}_i^- \subseteq \big(\inlim_{i \to \infty} \mathcal{S}_i\big)^-.
\end{align*}
\end{proposition}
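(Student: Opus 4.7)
The plan is to handle three inclusions---the two directions of the first equality and the second stated inclusion $\outlim_{i \to \infty} \mathcal{S}_i^- \subseteq (\inlim_{i \to \infty} \mathcal{S}_i)^-$. Two of these, namely $\inlim_{i \to \infty} \mathcal{S}_i^- \subseteq (\outlim_{i \to \infty} \mathcal{S}_i)^-$ and the second inclusion, will follow from a direct sequence-passing argument using only continuity of the Frobenius inner product and closedness of $\{(Y,X) \mid \ip{Y}{X} \le 0\}$. The remaining reverse direction of the equality is the real obstacle and will require the Moreau decomposition for closed convex cones.

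For the two easy inclusions, the recipe is the same. Given $Y$ in the limit on the left and $X$ in the corresponding polar's argument on the right, I realize both as limits of sequences $Y_i \in \mathcal{S}_i^-$ and $X_i \in \mathcal{S}_i$ with matched indices (whichever of the two is an outer limit contributes a subsequence, while the inner limit contributes the full sequence). The inequalities $\ip{Y_i}{X_i} \le 0$ pass to the limit along that common subsequence, yielding $\ip{Y}{X} \le 0$ and hence membership of $Y$ in the polar on the right. The only care needed is the pairing of ``whole sequence'' and ``subsequence'' in the two scenarios.

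The main obstacle is the inclusion $(\outlim_{i \to \infty} \mathcal{S}_i)^- \subseteq \inlim_{i \to \infty} \mathcal{S}_i^-$. Here I would fix $Y \in (\outlim_{i \to \infty} \mathcal{S}_i)^-$ and prove $d(Y,\mathcal{S}_i^-) \to 0$. Since each $\mathcal{S}_i$ is a closed convex cone, Moreau's theorem provides a decomposition $Y = P_i + Q_i$ where $P_i$ is the orthogonal projection of $Y$ onto $\mathcal{S}_i$, $Q_i$ is the orthogonal projection of $Y$ onto $\mathcal{S}_i^-$, and $\ip{P_i}{Q_i} = 0$; in particular $\ip{Y}{P_i} = \norm{P_i}^2$, $\norm{P_i} \le \norm{Y}$, and $d(Y,\mathcal{S}_i^-) = \norm{P_i}$. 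It therefore suffices to prove $\norm{P_i} \to 0$. Arguing by contradiction, assume $\norm{P_{i_k}} \ge \varepsilon > 0$ along some subsequence. Boundedness by $\norm{Y}$ lets me extract a further subsequence with $P_{i_k} \to W$ and $\norm{W} \ge \varepsilon$; since $P_{i_k} \in \mathcal{S}_{i_k}$, we obtain $W \in \outlim_{i \to \infty} \mathcal{S}_i$, whence $\ip{Y}{W} \le 0$ by the hypothesis on $Y$. But $\ip{Y}{P_{i_k}} = \norm{P_{i_k}}^2 \ge \varepsilon^2$ passes in the limit to $\ip{Y}{W} \ge \varepsilon^2 > 0$, a contradiction. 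Note that this is the only step where convexity of the $\mathcal{S}_i$ is genuinely used, through the existence of the Moreau decomposition; the other three inclusions do not need it.
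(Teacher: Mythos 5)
Your proof is correct, but it takes a genuinely different route from the paper. The paper disposes of all three inclusions by citing \cite[Corollary~11.35(b)]{RockafellarWets} (a general duality statement for set limits of convex cones, itself proved via epi-convergence/support-function machinery): the inclusion $\outlim_{i} \mathcal{S}_i^- \subseteq \big(\inlim_{i} \mathcal{S}_i\big)^-$ comes from one implication there, the forward inclusion of the equality is obtained by the duality trick of substituting $\mathcal{S}_i^-$ for $\mathcal{S}_i$ and polarizing, and the reverse inclusion $\big(\outlim_{i} \mathcal{S}_i\big)^- \subseteq \inlim_{i} \mathcal{S}_i^-$ comes from a second implication combined with $\mathcal{C}^{---} = \mathcal{C}^-$. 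You instead give a self-contained argument: the two ``easy'' inclusions by pairing a full sequence with a subsequence and passing $\ip{Y_i}{X_i} \le 0$ to the limit (both correct, including the care about which side supplies the subsequence), and the hard inclusion via Moreau's decomposition $Y = P_i + Q_i$, the identities $d(Y,\mathcal{S}_i^-) = \norm{P_i}$ and $\ip{Y}{P_i} = \norm{P_i}^2$, and a compactness/contradiction argument showing $\norm{P_i} \to 0$. All steps check out (note that every $\mathcal{S}_i$ and $\mathcal{S}_i^-$ contains $0_{m \times n}$, so the sequential characterizations of inner and outer limits apply). What each approach buys: the paper's proof is two lines given the cited corollary, but opaque without it; yours is longer yet elementary, and it isolates precisely where convexity of the $\mathcal{S}_i$ is indispensable, namely in the existence of the Moreau decomposition for the reverse inclusion of the equality.
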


\begin{proof}
By \cite[Exercise~4.14 and Proposition~4.15]{RockafellarWets}, $\inlim\limits_{i \to \infty} \mathcal{S}_i$ is a closed convex cone and $\outlim\limits_{i \to \infty} \mathcal{S}_i$ is a closed cone not necessarily convex.
Thus, the inclusion $\outlim\limits_{i \to \infty} \mathcal{S}_i^- \subseteq \big(\inlim\limits_{i \to \infty} \mathcal{S}_i\big)^-$ follows from the implication $\implies$ in the third equivalence of \cite[Corollary~11.35(b)]{RockafellarWets}. Replacing $\mathcal{S}_i$ by $\mathcal{S}_i^-$ in that inclusion and taking the polar yields $\inlim\limits_{i \to \infty} \mathcal{S}_i^- \subseteq \big(\outlim\limits_{i \to \infty} \mathcal{S}_i\big)^-$. Furthermore, by the implication $\implies$ in the second equivalence of \cite[Corollary~11.35(b)]{RockafellarWets}, $\inlim\limits_{i \to \infty} \mathcal{S}_i^- \supseteq \big(\outlim\limits_{i \to \infty} \mathcal{S}_i\big)^{---} = \big(\outlim\limits_{i \to \infty} \mathcal{S}_i\big)^-$.
\end{proof}

We deduce the inner limit of the normal cones to $\R_{\le \oshort{r}}^{m \times n}$ relative to $\R_r^{m \times n}$ at every point of $\R_{< r}^{m \times n}$.

\begin{corollary}
For every $X \in \R_{\ushort{r}}^{m \times n}$, if $\ushort{r} < r$ and $(X_i)_{i \in \N}$ is the sequence in $\R_r^{m \times n}$ obtained by applying Lemma~\ref{lemma:DenseTangentConeDecompositionDecreaseRank} to $X$, then
\begin{equation*}
\inlim_{i \to \infty} \regnorcone{\R_{\le \oshort{r}}^{m \times n}}{X_i}
= \inlim_{i \to \infty} \norcone{\R_{\le \oshort{r}}^{m \times n}}{X_i}
= \inlim_{i \to \infty} \connorcone{\R_{\le \oshort{r}}^{m \times n}}{X_i}
= \{0_{m \times n}\}.
\end{equation*}
In particular, for every $X \in \R_{< r}^{m \times n}$,
\begin{equation*}
\inlim_{\R_r^{m \times n} \ni Z \to X} \regnorcone{\R_{\le \oshort{r}}^{m \times n}}{Z}
= \inlim_{\R_r^{m \times n} \ni Z \to X} \norcone{\R_{\le \oshort{r}}^{m \times n}}{Z}
= \inlim_{\R_r^{m \times n} \ni Z \to X} \connorcone{\R_{\le \oshort{r}}^{m \times n}}{Z}
= \{0_{m \times n}\}.
\end{equation*}
\end{corollary}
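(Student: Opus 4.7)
The plan is to squeeze the three inner limits together and then compute the largest one by polarity. First, because $\R_{\le\oshort{r}}^{m\times n}$ is closed (hence locally closed at every $X_i$), \eqref{eq:InclusionsNormalCones} gives the chain $\regnorcone{\R_{\le\oshort{r}}^{m\times n}}{X_i}\subseteq\norcone{\R_{\le\oshort{r}}^{m\times n}}{X_i}\subseteq\connorcone{\R_{\le\oshort{r}}^{m\times n}}{X_i}$ for every $i\in\N$, which lifts to an analogous chain of inner limits. Since $0_{m\times n}$ lies in each of these cones, it lies in each inner limit, so the three equalities reduce to the single upper bound $\inlim_{i\to\infty}\connorcone{\R_{\le\oshort{r}}^{m\times n}}{X_i}\subseteq\{0_{m\times n}\}$.

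To obtain this upper bound, I plan to polarize. By \eqref{eq:ConvexifiedNormalCone}, $\connorcone{\R_{\le\oshort{r}}^{m\times n}}{X_i}=(\regtancone{\R_{\le\oshort{r}}^{m\times n}}{X_i})^-$ is the polar of a closed convex cone, so the first identity of Proposition~\ref{prop:InnerOuterLimitsPolarClosedConvexCones} yields $\inlim_{i\to\infty}\connorcone{\R_{\le\oshort{r}}^{m\times n}}{X_i}=\bigl(\outlim_{i\to\infty}\regtancone{\R_{\le\oshort{r}}^{m\times n}}{X_i}\bigr)^-$. For this specific sequence, the outer limit on the right has already been identified in \eqref{eq:MinimalMaximalInnerOuterLimitsRegularTangentConeRealDeterminantalVariety} of Corollary~\ref{coro:InnerOuterLimitsRegularTangentConeRealDeterminantalVariety} as $\tancone{\R_{\le 2r-\ushort{r}}^{m\times n}}{X}$. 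It therefore remains to show $(\tancone{\R_{\le 2r-\ushort{r}}^{m\times n}}{X})^-=\{0_{m\times n}\}$.

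This polar computation is the main obstacle. When $2r-\ushort{r}\ge\min\{m,n\}$ the tangent cone equals $\R^{m\times n}$ and the polar is trivially $\{0_{m\times n}\}$. Otherwise, \eqref{eq:TangentConeRealDeterminantalVariety} applied at $X\in\R_{\ushort{r}}^{m\times n}$ gives the decomposition $\tancone{\R_{\le 2r-\ushort{r}}^{m\times n}}{X}=\tancone{\R_{\ushort{r}}^{m\times n}}{X}\oplus\bigl(\norcone{\R_{\ushort{r}}^{m\times n}}{X}\cap\R_{\le 2(r-\ushort{r})}^{m\times n}\bigr)$. Since the first summand is a linear subspace, any element $Y$ of the polar must lie in its orthogonal complement $\norcone{\R_{\ushort{r}}^{m\times n}}{X}$ and hence writes $Y=U_\perp M V_\perp^\tp$ for some $M\in\R^{m-\ushort{r}\times n-\ushort{r}}$, with $U_\perp,V_\perp$ as in \eqref{eq:TangentConeRealDeterminantalVarietyMatrix} (with $\ushort{r}$ in place of $r$). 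The second summand is invariant under $\xi\mapsto-\xi$, so $Y$ must be orthogonal to it as well; and since $\ushort{r}<r$ makes $2(r-\ushort{r})\ge 2$, that summand contains every rank-one matrix $U_\perp uv^\tp V_\perp^\tp$. The resulting equations $u^\tp Mv=0$ for all $u\in\R^{m-\ushort{r}}$ and $v\in\R^{n-\ushort{r}}$ then force $M=0$, hence $Y=0_{m\times n}$.

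Finally, the ``in particular'' statement is immediate: for every $X\in\R_{<r}^{m\times n}$, \eqref{eq:InnerLimitCorrespondence} bounds the inner limit relative to $\R_r^{m\times n}$ at $X$ above by the inner limit along the specific sequence supplied by Lemma~\ref{lemma:DenseTangentConeDecompositionDecreaseRank}, which has just been shown to be $\{0_{m\times n}\}$, and it contains $\{0_{m\times n}\}$ trivially.
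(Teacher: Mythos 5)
Your proposal is correct and follows essentially the same route as the paper: chain the three inner limits via \eqref{eq:InclusionsNormalCones}, convert $\inlim_{i \to \infty} \connorcone{\R_{\le \oshort{r}}^{m \times n}}{X_i}$ into $\big(\outlim_{i \to \infty} \regtancone{\R_{\le \oshort{r}}^{m \times n}}{X_i}\big)^-$ by Proposition~\ref{prop:InnerOuterLimitsPolarClosedConvexCones}, and identify that outer limit as $\tancone{\R_{\le 2r-\ushort{r}}^{m \times n}}{X}$ via \eqref{eq:MinimalMaximalInnerOuterLimitsRegularTangentConeRealDeterminantalVariety}. The only (harmless) deviation is at the end, where you compute $(\tancone{\R_{\le 2r-\ushort{r}}^{m \times n}}{X})^- = \{0_{m \times n}\}$ directly from the decomposition \eqref{eq:TangentConeRealDeterminantalVariety}, whereas the paper simply recognizes this polar as $\regnorcone{\R_{\le 2r-\ushort{r}}^{m \times n}}{X}$ and invokes \eqref{eq:RegularNormalConeRealDeterminantalVariety}.
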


\begin{proof}
Observe that
\begin{align*}
\inlim_{i \to \infty} \regnorcone{\R_{\le \oshort{r}}^{m \times n}}{X_i}
&\subseteq \inlim_{i \to \infty} \norcone{\R_{\le \oshort{r}}^{m \times n}}{X_i}\\
&\subseteq \inlim_{i \to \infty} \connorcone{\R_{\le \oshort{r}}^{m \times n}}{X_i}\\
&= \big(\outlim_{i \to \infty} \regtancone{\R_{\le \oshort{r}}^{m \times n}}{X_i}\big)^-\\
&= \big(\tancone{\R_{\le 2r-\ushort{r}}^{m \times n}}{X}\big)^-\\
&= \regnorcone{\R_{\le 2r-\ushort{r}}^{m \times n}}{X}\\
&= \{0_{m \times n}\}.
\end{align*}
The two inclusions follow from \eqref{eq:InclusionsNormalCones}. The first equality follows from Proposition~\ref{prop:InnerOuterLimitsPolarClosedConvexCones}.
The second equality follows from \eqref{eq:MinimalMaximalInnerOuterLimitsRegularTangentConeRealDeterminantalVariety}. The last equality is clear if $2r-\ushort{r} \ge \min\{m,n\}$ and follows from \eqref{eq:RegularNormalConeRealDeterminantalVariety} otherwise since $\ushort{r} < 2r-\ushort{r}$.
\end{proof}

%We end this section by pointing out the following result.
%
%\begin{corollary}
%For every sequence $(\mathcal{S}_i)_{i \in \N}$ of closed convex cones in $\R^{m \times n}$,
%\begin{equation*}
%\inlim_{i \to \infty} \mathcal{S}_i^- = \big(\outlim_{i \to \infty} \mathcal{S}_i\big)^-.
%\end{equation*}
%\end{corollary}
%
%\begin{proof}
%Let us now prove that the first inclusion is actually an equality if $\mathcal{S}_i$ is a closed convex cone for every $i \in \N$. Let $\mathcal{S}$ denote the closed convex hull of $\outlim_{i \to \infty} \mathcal{S}_i$. By \cite[Corollary~6.21]{RockafellarWets},
%\begin{equation*}
%\big(\outlim_{i \to \infty} \mathcal{S}_i\big)^-
%= \big(\outlim_{i \to \infty} \mathcal{S}_i\big)^{---}
%= \mathcal{S}^-.
%\end{equation*}
%The result then follows from the third inclusion of \cite[Corollary~11.35(b)]{RockafellarWets} applied with $K := \mathcal{S}$.
%\end{proof}

\subsection{Outer limits of the normal cones to $\R_{\le \oshort{r}}^{m \times n}$ relative to $\R_r^{m \times n}$ at points of $\R_{< r}^{m \times n}$}
\label{subsec:OuterLimitsNormalConesRealDeterminantalVariety}
As a direct consequence of \eqref{eq:RegularNormalConeRealDeterminantalVariety}, for every $X \in \R_{< \oshort{r}}^{m \times n}$,
\begin{equation*}
\outlim_{\R_{< \oshort{r}}^{m \times n} \ni Z \to X} \regnorcone{\R_{\le \oshort{r}}^{m \times n}}{Z}
= \{0_{m \times n}\}.
\end{equation*}
Otherwise, we have the following result concerning the regular normal cone.
\begin{proposition}
For every $X \in \R_r^{m \times n}$,
\begin{equation*}
\outlim_{\R_{\oshort{r}}^{m \times n} \ni Z \to X} \regnorcone{\R_{\le \oshort{r}}^{m \times n}}{Z}
= \outlim_{Z \to X} \regnorcone{\R_{\le \oshort{r}}^{m \times n}}{Z}
= \norcone{\R_{\le \oshort{r}}^{m \times n}}{X}.
\end{equation*}
%Note that $\norcone{\R_r^{m \times n}}{X} \subseteq \R_{\le \min\{m,n\}-r}^{m \times n}$.
\end{proposition}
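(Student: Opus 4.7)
The plan is to handle the two equalities separately and to read most of the work off of the definition of $\norcone{\R_{\le\oshort{r}}^{m\times n}}{\cdot}$ and the formula \eqref{eq:RegularNormalConeRealDeterminantalVariety}. Since the correspondence $\regnorcone{\R_{\le\oshort{r}}^{m\times n}}{\cdot}$ has domain $\R_{\le\oshort{r}}^{m\times n}$, the outer limit $\outlim_{Z\to X}\regnorcone{\R_{\le\oshort{r}}^{m\times n}}{Z}$ is by convention taken over $Z\in\R_{\le\oshort{r}}^{m\times n}$, so the second equality is nothing but the definition \eqref{eq:NormalCone} of $\norcone{\R_{\le\oshort{r}}^{m\times n}}{X}$. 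For the first equality, note that $X\in\R_r^{m\times n}\subseteq\R_{\le\oshort{r}}^{m\times n}=\overline{\R_{\oshort{r}}^{m\times n}}$ by Proposition~\ref{prop:PointSetTopologyRealFixedRankManifold} (as $r\le\oshort{r}$), so the inclusion $\subseteq$ is immediate from the monotonicity \eqref{eq:MonotonicityInnerOuterLimits} applied with $S':=\R_{\oshort{r}}^{m\times n}\subseteq S:=\R_{\le\oshort{r}}^{m\times n}$.

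For the reverse inclusion $\supseteq$, I would pick $\eta\in\norcone{\R_{\le\oshort{r}}^{m\times n}}{X}$ and, by \eqref{eq:OuterLimitCorrespondence}, choose sequences $(Z_i)_{i\in\N}$ in $\R_{\le\oshort{r}}^{m\times n}$ converging to $X$ and $(\eta_i)_{i\in\N}$ converging to $\eta$ such that $\eta_i\in\regnorcone{\R_{\le\oshort{r}}^{m\times n}}{Z_i}$ for every $i\in\N$. The proof then splits into two cases according to the ranks of the $Z_i$. If there is a subsequence $(Z_{i_k})_{k\in\N}$ with $\rank Z_{i_k}=\oshort{r}$, that subsequence lies in $\R_{\oshort{r}}^{m\times n}$ and still converges to $X$ with $\eta_{i_k}\to\eta$, showing directly that $\eta\in\outlim_{\R_{\oshort{r}}^{m\times n}\ni Z\to X}\regnorcone{\R_{\le\oshort{r}}^{m\times n}}{Z}$. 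Otherwise $\rank Z_i<\oshort{r}$ for all $i$ large enough, and then \eqref{eq:RegularNormalConeRealDeterminantalVariety} forces $\regnorcone{\R_{\le\oshort{r}}^{m\times n}}{Z_i}=\{0_{m\times n}\}$, hence $\eta_i=0_{m\times n}$ and $\eta=0_{m\times n}$. In that degenerate case I would exhibit $\eta$ as a cluster point of $(0_{m\times n})_{i\in\N}$ along any auxiliary sequence $(W_i)_{i\in\N}$ in $\R_{\oshort{r}}^{m\times n}$ converging to $X$; such a sequence exists by Proposition~\ref{prop:PointSetTopologyRealFixedRankManifold}, and $0_{m\times n}\in\regnorcone{\R_{\le\oshort{r}}^{m\times n}}{W_i}$ holds trivially because the regular normal cone is a cone.

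There is no real obstacle: the argument is essentially a routine case analysis that exploits the collapse of $\regnorcone{\R_{\le\oshort{r}}^{m\times n}}{\cdot}$ to $\{0_{m\times n}\}$ on $\R_{<\oshort{r}}^{m\times n}$ given by \eqref{eq:RegularNormalConeRealDeterminantalVariety}, together with the density of $\R_{\oshort{r}}^{m\times n}$ in $\R_{\le\oshort{r}}^{m\times n}$. The only subtle point to state cleanly is that the domain convention makes the middle expression coincide with $\norcone{\R_{\le\oshort{r}}^{m\times n}}{X}$ by definition, so that no further computation is required for that equality.
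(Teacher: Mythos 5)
Your proof is correct and follows essentially the same route as the paper: the second equality is definitional via \eqref{eq:NormalCone}, one inclusion of the first is the monotonicity \eqref{eq:MonotonicityInnerOuterLimits}, and the reverse inclusion rests on the collapse \eqref{eq:RegularNormalConeRealDeterminantalVariety} of the regular normal cone to $\{0_{m\times n}\}$ at points of rank below $\oshort{r}$, so that nonzero cluster points can only arise along rank-$\oshort{r}$ subsequences. Your explicit handling of the degenerate case $\eta = 0_{m\times n}$ via density of $\R_{\oshort{r}}^{m \times n}$ is a small touch of added care over the paper's version, but the argument is the same.
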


\begin{proof}
The second equality is simply \eqref{eq:NormalCone}.
In the first equality, the inclusion $\subseteq$ follows from \eqref{eq:MonotonicityInnerOuterLimits}. Let us prove the converse inclusion. Let $(X_i)_{i \in \N}$ be a sequence in $\R_{\le \oshort{r}}^{m \times n}$ converging to $X \in \R_r^{m \times n}$ such that $\outlim_{i \to \infty} \regnorcone{\R_{\le \oshort{r}}^{m \times n}}{X_i} \ne \{0_{m \times n}\}$; such a sequence necessarily exists since $\norcone{\R_{\le \oshort{r}}^{m \times n}}{X} \ne \{0_{m \times n}\}$. Let $(X_{i_k})_{k \in \N}$ be the subsequence containing all elements of $(X_i)_{i \in \N}$ that have rank $\oshort{r}$; such a subsequence necessarily exists since $\outlim_{i \to \infty} \regnorcone{\R_{\le \oshort{r}}^{m \times n}}{X_i} \ne \{0_{m \times n}\}$. Let us establish that $\outlim_{k \to \infty} \regnorcone{\R_{\le \oshort{r}}^{m \times n}}{X_{i_k}} \supseteq \outlim_{i \to \infty} \regnorcone{\R_{\le \oshort{r}}^{m \times n}}{X_i}$ and the proof will be complete.
Let $\eta \in \outlim_{i \to \infty} \regnorcone{\R_{\le \oshort{r}}^{m \times n}}{X_i} \setminus \{0_{m \times n}\}$. Then, $\eta$ is a cluster point of a sequence $(\eta_i)_{i \in \N}$ such that $\eta_i \in \regnorcone{\R_{\le \oshort{r}}^{m \times n}}{X_i}$ for every $i \in \N$. Let $(\eta_{j_k})_{k \in \N}$ be a subsequence converging to $\eta$. Then, since $\eta \ne 0_{m \times n}$, $(X_{j_k})_{k \in \N}$ necessarily contains a subsequence of $(X_{i_k})_{k \in \N}$ and therefore $\eta \in \outlim_{k \to \infty} \regnorcone{\R_{\le \oshort{r}}^{m \times n}}{X_{i_k}}$.
\end{proof}

We already know from \eqref{eq:NormalConeOuterSemicontinuous} that the normal cone correspondence is outer semicontinuous. The following proposition gives a result a bit more precise.

\begin{proposition}
\label{prop:OuterLimitNormalConeRealDeterminantalVariety}
For every $X \in \R_{\ushort{r}}^{m \times n}$, if $\ushort{r} < r$ and $(X_i)_{i \in \N}$ is the sequence in $\R_r^{m \times n}$ obtained by applying Lemma~\ref{lemma:DenseTangentConeDecompositionDecreaseRank} to $X$, then
\begin{equation*}
\norcone{\R_{\le \oshort{r}}^{m \times n}}{X} \subseteq \outlim_{i \to \infty} \norcone{\R_{\le \oshort{r}}^{m \times n}}{X_i}.
\end{equation*}
In particular, for every $X \in \R_{\ushort{r}}^{m \times n}$,
\begin{equation*}
\outlim_{\R_r^{m \times n} \ni Z \to X} \norcone{\R_{\le \oshort{r}}^{m \times n}}{Z}
= \outlim_{\R_{\le r}^{m \times n} \ni Z \to X} \norcone{\R_{\le \oshort{r}}^{m \times n}}{Z}
= \outlim_{Z \to X} \norcone{\R_{\le \oshort{r}}^{m \times n}}{Z}
= \norcone{\R_{\le \oshort{r}}^{m \times n}}{X}.
\end{equation*}
\end{proposition}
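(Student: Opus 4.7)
The plan is to establish the non-trivial inclusion $\norcone{\R_{\le \oshort{r}}^{m \times n}}{X} \subseteq \outlim_{i \to \infty} \norcone{\R_{\le \oshort{r}}^{m \times n}}{X_i}$ by hand-constructing, for each $\eta$ in the left-hand side, a sequence realizing it as a cluster point. The key idea is to exploit the freedom granted by Lemma~\ref{lemma:DenseTangentConeDecompositionDecreaseRank}: since every pair in $\mathcal{U}_\perp \times \mathcal{V}_\perp$ is a cluster point of $(([\bar U_i\;U_{i\perp}],[\bar V_i\;V_{i\perp}]))_{i \in \N}$, we can tailor the cluster point to the SVD of the particular $\eta$ at hand.

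First, I would expand what $\eta \in \norcone{\R_{\le \oshort{r}}^{m\times n}}{X}$ means via \eqref{eq:NormalConeRealDeterminantalVariety} and \eqref{eq:ConvexifiedNormalConeRealDeterminantalVariety}: $\im \eta \subseteq (\im X)^\perp$, $\im \eta^\tp \subseteq (\im X^\tp)^\perp$, and $\rank \eta \le \min\{m,n\}-\oshort{r}$. Taking a thin SVD $\eta = \tilde U \tilde \Sigma \tilde V^\tp$, the factors satisfy $\im \tilde U \subseteq (\im X)^\perp$ and $\im \tilde V \subseteq (\im X^\tp)^\perp$. Since $\rank \eta \le \min\{m,n\}-\oshort{r} \le m-r$ and $\le n-r$, I can extend $\tilde U$ to $U_\perp \in \st(m-r,m)$ inside $(\im X)^\perp$ and $\tilde V$ to $V_\perp \in \st(n-r,n)$ inside $(\im X^\tp)^\perp$, then complete these into $[\bar U_\perp\; U_\perp] \in \mathcal{U}_\perp$ and $[\bar V_\perp\; V_\perp] \in \mathcal{V}_\perp$ matching the block sizes of $[\bar U_i\;U_{i\perp}]$ and $[\bar V_i\;V_{i\perp}]$. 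By construction $\eta = U_\perp K V_\perp^\tp$ where $K := U_\perp^\tp \eta V_\perp \in \R^{(m-r)\times(n-r)}$ has the same rank as $\eta$, hence $\rank K \le \min\{m,n\}-\oshort{r}$.

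Second, by Lemma~\ref{lemma:DenseTangentConeDecompositionDecreaseRank}, $([\bar U_\perp\; U_\perp],[\bar V_\perp\; V_\perp])$ is a cluster point of $(([\bar U_i\;U_{i\perp}],[\bar V_i\;V_{i\perp}]))_{i \in \N}$, so I can extract a subsequence $(i_k)_{k \in \N}$ along which $U_{i_k\perp} \to U_\perp$ and $V_{i_k\perp} \to V_\perp$. Setting $\eta_{i_k} := U_{i_k\perp} K V_{i_k\perp}^\tp$ for $k \in \N$ and $\eta_i := 0_{m\times n}$ for the remaining indices, each $\eta_{i_k}$ lies in $\im U_{i_k\perp} \otimes \im V_{i_k\perp} = (\im X_{i_k})^\perp \otimes (\im X_{i_k}^\tp)^\perp$ with rank at most $\min\{m,n\}-\oshort{r}$, so by \eqref{eq:NormalConeRealDeterminantalVariety}--\eqref{eq:ConvexifiedNormalConeRealDeterminantalVariety} it belongs to $\norcone{\R_{\le \oshort{r}}^{m \times n}}{X_{i_k}}$. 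Continuity of the matrix product then yields $\eta_{i_k} \to \eta$, giving the desired inclusion.

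Finally, for the chain of equalities: the trivial inclusion of outer limits from monotonicity \eqref{eq:MonotonicityInnerOuterLimits} gives $\outlim_{\R_r^{m\times n} \ni Z \to X} \subseteq \outlim_{\R_{\le r}^{m\times n} \ni Z \to X} \subseteq \outlim_{Z \to X}$, while \eqref{eq:NormalConeOuterSemicontinuous} collapses the last to $\norcone{\R_{\le \oshort{r}}^{m\times n}}{X}$. When $\ushort{r}<r$, the first part shows $\norcone{\R_{\le \oshort{r}}^{m\times n}}{X} \subseteq \outlim_{\R_r^{m\times n}\ni Z \to X}\norcone{\R_{\le \oshort{r}}^{m\times n}}{Z}$, closing the loop; when $\ushort{r}=r$, this is instead supplied by Proposition~\ref{prop:RelativeContinuityNormalConesRealDeterminantalVariety}. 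The main obstacle I anticipate is the rank-preservation step: one must realize that the freedom in choosing the cluster point of $(\bar U_i,U_{i\perp},\bar V_i,V_{i\perp})$ is strong enough to make $U_\perp$ and $V_\perp$ cover the entire column/row space of $\eta$, so that all of $\eta$'s mass sits in the $U_\perp\otimes V_\perp$ block and survives the passage to the smaller rectangular factors $U_{i_k\perp},V_{i_k\perp}$ without inflating the rank.
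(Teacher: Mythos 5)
Your proposal is correct and follows essentially the same route as the paper: write $\eta$ as $U_\perp A' V_\perp^\tp$ with $U_\perp,V_\perp$ spanning $(m-r)$- and $(n-r)$-dimensional subspaces of $(\im X)^\perp$ and $(\im X^\tp)^\perp$ containing the column and row spaces of $\eta$ (possible because $\rank\eta\le\min\{m,n\}-\oshort{r}\le\min\{m-r,n-r\}$), invoke the cluster-point density of Lemma~\ref{lemma:DenseTangentConeDecompositionDecreaseRank} to get a subsequence with $U_{i_k\perp}\to U_\perp$, $V_{i_k\perp}\to V_\perp$, and transport $A'$ along it; the chain of equalities is then handled exactly as in the paper via \eqref{eq:MonotonicityInnerOuterLimits}, \eqref{eq:NormalConeOuterSemicontinuous} and Proposition~\ref{prop:RelativeContinuityNormalConesRealDeterminantalVariety}. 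The only cosmetic difference is that you reach the factorization through a thin SVD of $\eta$ rather than through the intermediate $(m-\ushort{r})\times(n-\ushort{r})$ matrix $A$, which changes nothing of substance.
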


\begin{proof}
For the first part, let $\eta \in \norcone{\R_{\le \oshort{r}}^{m \times n}}{X}$. There are $\tilde{U}_\perp \in \mathcal{U}_\perp$, $\tilde{V}_\perp \in \mathcal{V}_\perp$ and $A \in \R_{\le \min\{m,n\}-\oshort{r}}^{m-\ushort{r} \times n-\ushort{r}}$ such that $\eta = \tilde{U}_\perp A \tilde{V}_\perp^\tp$. There are $\tilde{U} \in \st(m-r,m-\ushort{r})$, $\tilde{V} \in \st(n-r,n-\ushort{r})$ and $A' \in \R_{\le \min\{m,n\}-\oshort{r}}^{m-r \times n-r}$ such that $A = \tilde{U} A' \tilde{V}^\tp$. Thus, $\eta = U_\perp A' V_\perp^\tp$ with $U_\perp := \tilde{U}_\perp \tilde{U} \in \st(m-r,m)$ and $V_\perp := \tilde{V}_\perp \tilde{V} \in \st(n-r,n)$. Since $\im U_\perp \subseteq (\im U)^\perp$ and $\im V_\perp \subseteq (\im V)^\perp$, $(U_\perp, V_\perp)$ is a cluster point of $((U_{i\perp}, V_{i\perp}))_{i \in \N}$. Thus, for every $i \in \N$, $\eta_i := U_{i\perp} A' V_{i\perp}^\tp \in \norcone{\R_{\le \oshort{r}}^{m \times n}}{X_i}$ and $\eta$ is a cluster point of $(\eta_i)_{i \in \N}$.

For the second part, observe that the last equality is simply \eqref{eq:NormalConeOuterSemicontinuous}. Thus, in view of \eqref{eq:MonotonicityInnerOuterLimits}, it suffices to prove that
\begin{equation*}
\norcone{\R_{\le \oshort{r}}^{m \times n}}{X} \subseteq \outlim_{\R_r^{m \times n} \ni Z \to X} \norcone{\R_{\le \oshort{r}}^{m \times n}}{Z}.
\end{equation*}
If $\ushort{r} = r$, the result follows from Proposition~\ref{prop:RelativeContinuityNormalConesRealDeterminantalVariety}. If $\ushort{r} < r$, the result follows from \eqref{eq:OuterLimitCorrespondence} and the first part.
\end{proof}

The following proposition shows that the outer limit of the correspondence $\connorcone{\R_{\le \oshort{r}}^{m \times n}}{\cdot}$ at points of $\R_{< r}^{m \times n}$ is different when it is considered relative to $\R_r^{m \times n}$ and to $\R_{\le r}^{m \times n}$.

\begin{proposition}
For every $X \in \R_{\ushort{r}}^{m \times n}$,
\begin{equation*}
\outlim_{\R_r^{m \times n} \ni Z \to X} \connorcone{\R_{\le \oshort{r}}^{m \times n}}{Z}
= \norcone{\R_{\le r}^{m \times n}}{X}
\subseteq \outlim_{\R_{\le r}^{m \times n} \ni Z \to X} \connorcone{\R_{\le \oshort{r}}^{m \times n}}{Z}
= \connorcone{\R_{\le \oshort{r}}^{m \times n}}{X}.
\end{equation*}
\end{proposition}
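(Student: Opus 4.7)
The plan is to establish the chain in three pieces, working from both ends inward.

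For the first equality, I would first observe that on $\R_r^{m \times n}$ the correspondences $\connorcone{\R_{\le \oshort{r}}^{m \times n}}{\cdot}$ and $\norcone{\R_{\le r}^{m \times n}}{\cdot}$ coincide. Indeed, for $Z \in \R_r^{m \times n}$, formula \eqref{eq:ConvexifiedNormalConeRealDeterminantalVariety} gives $\connorcone{\R_{\le \oshort{r}}^{m \times n}}{Z} = \norcone{\R_r^{m \times n}}{Z} = (\im Z)^\perp \otimes (\im Z^\tp)^\perp$, while every element of this set has rank at most $\min\{m-r,n-r\} = \min\{m,n\}-r$, so \eqref{eq:NormalConeRealDeterminantalVariety} yields $\norcone{\R_{\le r}^{m \times n}}{Z} = \norcone{\R_r^{m \times n}}{Z}$ as well. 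Then I would invoke Proposition~\ref{prop:OuterLimitNormalConeRealDeterminantalVariety} with $\oshort{r}$ replaced by $r$ (permissible since $\ushort{r} \le r < \min\{m,n\}$) to conclude $\outlim_{\R_r^{m \times n} \ni Z \to X} \norcone{\R_{\le r}^{m \times n}}{Z} = \norcone{\R_{\le r}^{m \times n}}{X}$.

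The middle inclusion is then immediate from the monotonicity property \eqref{eq:MonotonicityInnerOuterLimits}, since $\R_r^{m \times n} \subseteq \R_{\le r}^{m \times n}$.

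For the last equality, the inclusion $\supseteq$ follows from \eqref{eq:NonDeletedInnerOuterLimits} together with the closedness of $\connorcone{\R_{\le \oshort{r}}^{m \times n}}{X}$ (it is defined as a polar). For $\subseteq$, the key tool is Proposition~\ref{prop:InnerOuterLimitsPolarClosedConvexCones}. Writing $\connorcone{\R_{\le \oshort{r}}^{m \times n}}{Z} = (\regtancone{\R_{\le \oshort{r}}^{m \times n}}{Z})^-$ and fixing an arbitrary sequence $(Z_i)_{i \in \N}$ in $\R_{\le r}^{m \times n}$ converging to $X$, this proposition yields
\begin{equation*}
\outlim_{i \to \infty} \connorcone{\R_{\le \oshort{r}}^{m \times n}}{Z_i}
\subseteq \bigl(\inlim_{i \to \infty} \regtancone{\R_{\le \oshort{r}}^{m \times n}}{Z_i}\bigr)^-.
\end{equation*}
Since any sequential inner limit along $(Z_i)$ contains the inner limit relative to $\R_{\le r}^{m \times n}$, and the latter equals $\regtancone{\R_{\le \oshort{r}}^{m \times n}}{X}$ by Corollary~\ref{coro:InnerOuterLimitsRegularTangentConeRealDeterminantalVariety} (inner semicontinuity of $\regtancone{\R_{\le \oshort{r}}^{m \times n}}{\cdot}$), taking polars reverses the inclusion and gives an upper bound equal to $(\regtancone{\R_{\le \oshort{r}}^{m \times n}}{X})^- = \connorcone{\R_{\le \oshort{r}}^{m \times n}}{X}$. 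Taking the union over all admissible sequences, per the sequential characterization in \eqref{eq:OuterLimitCorrespondence}, completes the argument.

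The only subtle point — and the place where I would be most careful — is the direction of the inner-limit monotonicity: the inner limit over the set is a subset of every sequential inner limit, so after taking polars the inequality flips in our favor. Everything else reduces to routine bookkeeping with Theorem~\ref{theorem:TangentAndNormalConesRealDeterminantalVariety}, Proposition~\ref{prop:InnerOuterLimitsPolarClosedConvexCones}, and Corollary~\ref{coro:InnerOuterLimitsRegularTangentConeRealDeterminantalVariety}.
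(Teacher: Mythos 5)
Your proposal is correct and follows essentially the same route as the paper: the first equality via the identification $\connorcone{\R_{\le \oshort{r}}^{m \times n}}{Z} = \norcone{\R_{\le r}^{m \times n}}{Z}$ on $\R_r^{m \times n}$ combined with Proposition~\ref{prop:OuterLimitNormalConeRealDeterminantalVariety}, the middle inclusion from \eqref{eq:MonotonicityInnerOuterLimits}, and the last equality by polarity (Proposition~\ref{prop:InnerOuterLimitsPolarClosedConvexCones}) against the inner semicontinuity of $\regtancone{\R_{\le \oshort{r}}^{m \times n}}{\cdot}$ from Corollary~\ref{coro:InnerOuterLimitsRegularTangentConeRealDeterminantalVariety}, together with \eqref{eq:NonDeletedInnerOuterLimits}. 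The only cosmetic difference is that you spell out the rank bound justifying $\norcone{\R_{\le r}^{m \times n}}{Z} = \norcone{\R_r^{m \times n}}{Z}$, which the paper leaves implicit.
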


\begin{proof}
Let $X \in \R_{\ushort{r}}^{m \times n}$.
The first equality follows from \eqref{eq:ConvexifiedNormalConeRealDeterminantalVariety} and Proposition~\ref{prop:OuterLimitNormalConeRealDeterminantalVariety}:
\begin{equation*}
\outlim_{\R_r^{m \times n} \ni Z \to X} \connorcone{\R_{\le \oshort{r}}^{m \times n}}{Z}
= \outlim_{\R_r^{m \times n} \ni Z \to X} \norcone{\R_{\le r}^{m \times n}}{Z}
= \norcone{\R_{\le r}^{m \times n}}{X}.
\end{equation*}
Let us prove the other equality. On the one hand, by \eqref{eq:NonDeletedInnerOuterLimits},
\begin{equation*}
\outlim_{\R_{\le r}^{m \times n} \ni Z \to X} \connorcone{\R_{\le \oshort{r}}^{m \times n}}{Z}
\supseteq \connorcone{\R_{\le \oshort{r}}^{m \times n}}{X}
= \norcone{\R_{\ushort{r}}^{m \times n}}{X}.
\end{equation*}
On the other hand, by \eqref{eq:LowerUpperBoundsInnerOuterLimitsRegularTangentConeRealDeterminantalVariety}, for every sequence $(X_i)_{i \in \N}$ in $\R_{\le r}^{m \times n}$ converging to $X$,
\begin{equation*}
\tancone{\R_{\ushort{r}}^{m \times n}}{X}
\subseteq \inlim_{i \to \infty} \regtancone{\R_{\le \oshort{r}}^{m \times n}}{X_i}
\end{equation*}
and therefore, by Proposition~\ref{prop:InnerOuterLimitsPolarClosedConvexCones},
\begin{equation*}
\outlim_{i \to \infty} \connorcone{\R_{\le \oshort{r}}^{m \times n}}{X_i}
\subseteq \big(\inlim_{i \to \infty} \regtancone{\R_{\le \oshort{r}}^{m \times n}}{X_i}\big)^-
\subseteq (\tancone{\R_{\ushort{r}}^{m \times n}}{X})^-
= \norcone{\R_{\ushort{r}}^{m \times n}}{X}
= \connorcone{\R_{\le \oshort{r}}^{m \times n}}{X}.
\end{equation*}
Thus, $\outlim_{\R_{\le r}^{m \times n} \ni Z \to X} \connorcone{\R_{\le \oshort{r}}^{m \times n}}{Z}
\subseteq \norcone{\R_{\ushort{r}}^{m \times n}}{X}$.
\end{proof}

\section{Connection with the $a$-regularity of the Whitney stratification}
\label{sec:ConnectionRegularityWhitneyStratification}
In this section, we show that the $a$-regularity of the well-known Whitney stratification of the determinantal variety is included in Theorem~\ref{theorem:MainResult} as a particular case. Throughout the section, $\ushort{r}$, $r$ and $\oshort{r}$ are positive integers such that $\ushort{r} < r \le \oshort{r} < \min\{m,n\}$.
As mentioned in \cite[\S 4.1]{HosseiniUschmajew2019}, being a real algebraic variety, $\R_{\le \oshort{r}}^{m \times n}$ admits a Whitney stratification and, in particular, satisfies the so-called $a$-regularity condition introduced in \cite[\S 19]{Whitney1965}: for every sequence $(X_i)_{i \in \N}$ in $\R_r^{m \times n}$ converging to $X \in \R_{\ushort{r}}^{m \times n}$, if $\big(\tancone{\R_r^{m \times n}}{X_i}\big)_{i \in \N}$ converges to $\mathcal{T}$ in $\grass(\dim \R_r^{m \times n}, mn)$, then
\begin{equation}
\label{eq:RegularityWhitneyStratification}
\tancone{\R_{\ushort{r}}^{m \times n}}{X} \subseteq \mathcal{T}.
\end{equation}
We should make two remarks here. First, $\R^{m \times n}$ has been identified with $\R^{mn}$ and each tangent space to $\R_r^{m \times n}$ is thus seen as a linear subspace of $\R^{mn}$, i.e., as an element of $\grass(\dim \R_r^{m \times n}, mn)$. Secondly, the distance defined in \cite[(2.14)]{Whitney1965} is the gap distance according to \cite[(I-2-1) and (I-2-3)(1)]{FerrerGarciaPuerta1994} and \cite[(2.2)]{Whitney1965}, a distance known to induce the usual topology of the Grassmann manifold \cite[(I-2-6)]{FerrerGarciaPuerta1994}.

In this section, we show that the first inclusion of \eqref{eq:LowerUpperBoundsInnerOuterLimitsTangentConeRealDeterminantalVariety} reduces to \eqref{eq:RegularityWhitneyStratification} when $r = \oshort{r}$. This will follow from Proposition~\ref{prop:GrassmannPainlevéConvergences} that states that, in the Grassmann manifold, the convergence for the gap distance implies the convergence in the sense of Painlevé. Before introducing that result, we review the definition of the gap distance for the convenience of the reader.

First, let us recall from \cite[(2.1)]{BendokatZimmermannAbsil2020} that, given a positive integer $p \le n$, $\grass(p,n)$ is the smooth manifold of all $p$-dimensional linear subspaces of $\R^n$ and that every $\mathcal{G} \in \grass(p,n)$ can be identified with a unique orthogonal projection $G \in \R_p^{n \times n}$ called the orthogonal projection onto $\mathcal{G}$.
The gap distance between $\mathcal{G}_1$ and $\mathcal{G}_2$ in $\grass(p,n)$ is defined as $\norm{G_1-G_2}_2$ where $G_i$ is the orthogonal projection onto $\mathcal{G}_i$ for every $i \in \{1,2\}$ and $\norm{\cdot}_2$ is the spectral norm on $\R^{m \times n}$ \cite[(I-2-1)]{FerrerGarciaPuerta1994}. Let us mention that other topologically equivalent distances on the Grassmann manifold are given in \cite[(1) and Table~2]{YeLim2016}.

\begin{proposition}
\label{prop:GrassmannPainlevéConvergences}
If $(\mathcal{S}_i)_{i \in \N}$ converges to $\mathcal{S}$ in $\grass(p,n)$ endowed with the gap distance, then $(\mathcal{S}_i)_{i \in \N}$ converges to $\mathcal{S}$ in the sense of Painlevé.
\end{proposition}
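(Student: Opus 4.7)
The plan is to work with the identification of $\grass(p,n)$ with orthogonal projections: write $S_i, S \in \R^{n \times n}$ for the orthogonal projections onto $\mathcal{S}_i$ and $\mathcal{S}$ respectively, so that the hypothesis becomes $\norm{S_i - S}_2 \to 0$. Since $\inlim_{i \to \infty} \mathcal{S}_i \subseteq \outlim_{i \to \infty} \mathcal{S}_i$ holds automatically, to prove Painlev\'e convergence to $\mathcal{S}$ it suffices to establish the two inclusions $\mathcal{S} \subseteq \inlim_{i \to \infty} \mathcal{S}_i$ and $\outlim_{i \to \infty} \mathcal{S}_i \subseteq \mathcal{S}$.

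For the first inclusion, I would pick an arbitrary $x \in \mathcal{S}$, so that $Sx = x$, and produce the natural candidate sequence $x_i := S_i x \in \mathcal{S}_i$. The submultiplicativity of the spectral norm gives the estimate $\norm{x_i - x} = \norm{(S_i - S)x} \le \norm{S_i - S}_2 \norm{x}$, which tends to $0$. Hence $d(x, \mathcal{S}_i) \to 0$, and $x$ belongs to the inner limit by the characterization recalled in Section~\ref{subsec:InnerOuterLimitsContinuityCorrespondences}.

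For the second inclusion, I would invoke the sequential characterization of the outer limit: any $x$ in $\outlim_{i \to \infty} \mathcal{S}_i$ is the limit of some subsequence $y_{i_k} \in \mathcal{S}_{i_k}$. Using the idempotency identity $y_{i_k} = S_{i_k} y_{i_k}$ together with the triangle inequality
\[
\norm{y_{i_k} - Sx} \le \norm{S_{i_k}}_2 \norm{y_{i_k} - x} + \norm{S_{i_k} - S}_2 \norm{x}
\]
and the bound $\norm{S_{i_k}}_2 = 1$ would yield $y_{i_k} \to Sx$; comparison with the original convergence $y_{i_k} \to x$ then forces $x = Sx$, i.e., $x \in \mathcal{S}$.

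The argument has essentially no obstacle; the statement simply reflects the fact, already alluded to in the paragraph introducing the gap distance, that this distance induces the usual topology on the Grassmannian. The only point requiring a small amount of care is the translation from the operator-norm convergence of projections to the set-theoretic distance statements appearing in the definitions of $\inlim$ and $\outlim$, but both directions are handled uniformly by the submultiplicativity $\norm{Tv} \le \norm{T}_2 \norm{v}$ applied once with $T = S_i - S$ and $v = x$, and once with $T = S_{i_k}$ and $v = y_{i_k} - x$.
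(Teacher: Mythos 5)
Your proof is correct and follows essentially the same route as the paper: identify subspaces with orthogonal projections, show $\mathcal{S} \subseteq \inlim_{i\to\infty}\mathcal{S}_i$ via the sequence $S_i x$, and show $\outlim_{i\to\infty}\mathcal{S}_i \subseteq \mathcal{S}$ by passing to the limit in the idempotency relation along a convergent subsequence. The only difference is that you make the operator-norm estimates explicit where the paper leaves them implicit.
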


\begin{proof}
For every $i \in \N$, let $P_i$ and $P$ denote the orthogonal projections onto $\mathcal{S}_i$ and $\mathcal{S}$, respectively. By hypothesis, $\lim_{i \to \infty} \norm{P_i-P}_2 = 0$. Let us prove that
\begin{equation*}
\outlim_{i \to \infty} \mathcal{S}_i \subseteq \mathcal{S} \subseteq \inlim_{i \to \infty} \mathcal{S}_i.
\end{equation*}
The first inclusion follows from the fact that, for every sequence $(v_i)_{i \in \N}$ such that $v_i \in \mathcal{S}_i$ for every $i \in \N$, since $P_i v_i = v_i$ for every $i \in \N$, each cluster point $v$ of $(v_i)_{i \in \N}$ satisfies $Pv = v$, i.e., belongs to $\mathcal{S}$. The second inclusion follows from the fact that, for every $v \in \mathcal{S}$, if $v_i := P_i v$ for every $i \in \N$, then $v_i \in \mathcal{S}_i$ for every $i \in \N$ and $(v_i)_{i \in \N}$ converges to $v$.
\end{proof}

Let us now prove that \eqref{eq:RegularityWhitneyStratification} follows from \eqref{eq:LowerUpperBoundsInnerOuterLimitsTangentConeRealDeterminantalVariety}. Let $(X_i)_{i \in \N}$ be a sequence in $\R_r^{m \times n}$ converging to $X \in \R_{\ushort{r}}^{m \times n}$ such that $\big(\tancone{\R_r^{m \times n}}{X_i}\big)_{i \in \N}$ converges to $\mathcal{T}$ in $\grass(\dim \R_r^{m \times n}, mn)$ endowed with the gap distance. Then, by Proposition~\ref{prop:GrassmannPainlevéConvergences}, $\mathcal{T} = \setlim_{i \to \infty} \tancone{\R_r^{m \times n}}{X_i}$. Thus, the first inclusion of \eqref{eq:LowerUpperBoundsInnerOuterLimitsTangentConeRealDeterminantalVariety} with $r = \oshort{r}$ reduces to \eqref{eq:RegularityWhitneyStratification}, as announced.

\bibliographystyle{alphaurl}
\bibliography{golikier_bib}
\end{document}